\tikzset{>={Stealth[scale=1.5]}}
\def\graycolor{gray!60}
 \newcommand\red[1]{{\color{red} #1}}
\newcommand\tcirc[3]{
	\ifthenelse{\equal{#1}{w}}{\filldraw[fill=white,draw=black] (#2) circle (0.075);}{}%
	\ifthenelse{\equal{#1}{b}}{\filldraw[black] (#2) circle (0.075);}{}%
	\draw (#2) ++(0,0.35) node {$#3$};
%		\draw (#2) node[above=2pt] {$#3$};
	}
\newcommand\bond[1]{\draw (#1) -- +(1,0)}
\newcommand\vbond[1]{\draw (#1) -- +(0,-1)}
\newcommand\diagbond[2]{
	\ifthenelse{\equal{#1}{u}}{
		\draw[semithick] (#2) -- +(0.5,0.865);
	}{}
	\ifthenelse{\equal{#1}{d}}{
		\draw[semithick] (#2) -- +(0.5,-0.865);
	}{}
	}
\newcommand\dbond[2]{
	\draw (#2) ++(0.03,0.03) -- +(0.94,0);
	\draw (#2) ++(0.03,-0.03) -- +(0.94,0);
	\ifthenelse{\equal{#1}{r}}{
		% Vertex at (#2) ++(0.6,0)
		\draw[semithick] (#2) ++(0.6,0) ++(-0.15,0.2) -- ++(0.15,-0.2) -- +(-0.15,-0.2);
	}{}
	\ifthenelse{\equal{#1}{l}}{
		% Vertex at (#2) ++(0.45,0)
		\draw[semithick] (#2) ++(0.45,0) ++(0.15,0.2) -- ++(-0.15,-0.2) -- +(0.15,-0.2);
	}{}
	}
\newcommand\tbond[2]{
	\draw (#2)  -- +(1,0);
	\draw (#2) ++(0.05,0.06) -- +(0.9,0);
	\draw (#2) ++(0.05,-0.06) -- +(0.9,0);
	\ifthenelse{\equal{#1}{r}}{
		% Vertex at (#2) ++(0.6,0)
		\draw[semithick] (#2) ++(0.6,0) ++(-0.15,0.2) -- ++(0.15,-0.2) -- +(-0.15,-0.2);
	}{}
	\ifthenelse{\equal{#1}{l}}{
		% Vertex at (#2) ++(0.45,0)
		\draw[semithick] (#2) ++(0.45,0) ++(0.15,0.2) -- ++(-0.15,-0.2) -- +(0.15,-0.2);
	}{}
	}
\newcommand\tcross[2]{
	\draw (#1) ++(0,0.35) node {$#2$};
	\draw[semithick] (#1) ++(-0.15,-0.15)-- +(0.3,0.3);
	\draw[semithick] (#1) ++(-0.15,0.15)-- +(0.3,-0.3);
	}
\newcommand\tsquare[2]{
		\draw[semithick,color=blue] (#1) ++(-0.15,-0.15) rectangle ++(0.3,0.3);
		\tcross{#1}{#2};
		}
\newcommand\tstar[2]{
	\draw[color=red] (#1) node {\Large$*$};
	\draw (#1) ++(0,0.35) node {$#2$};
	}
\newcommand\DDnode[3]{
\ifthenelse{\equal{#1}{w}}{\tcirc{w}{#2}{#3}}{}		% white - non-compact root (Satake diagram)
\ifthenelse{\equal{#1}{b}}{\tcirc{b}{#2}{#3}}{}		% black - compact root (Satake diagram)
\ifthenelse{\equal{#1}{x}}{\tcross{#2}{#3}}{}		% crossed root (corresponding to parabolic)
\ifthenelse{\equal{#1}{s}}{\tstar{#2}{#3}}{}		% starred root (my notation for sub-parabolic)
\ifthenelse{\equal{#1}{q}}{\tsquare{#2}{#3}}{}		% crossed square (Iw root)
}
 \newcommand\Athree[2]{
 \begin{tikzpicture}[scale=1,baseline=-3pt]

 \bond{0,0};		% bond
 \bond{1,0};		% double bond

 \StrBefore{#2}{,}[\labelone]
 \StrBetween[1,2]{#2}{,}{,}[\labeltwo]
 \StrBehind[2]{#2}{,}[\labelthree]

 \StrChar{#1}{1}[\nodetype];
 \DDnode{\nodetype}{0,0}{\labelone};
 \StrChar{#1}{2}[\nodetype];
 \DDnode{\nodetype}{1,0}{\labeltwo};
 \StrChar{#1}{3}[\nodetype];
 \DDnode{\nodetype}{2,0}{\labelthree};
 \useasboundingbox (-.4,-.2) rectangle (2.4,0.55); % make bounding box bigger
 \end{tikzpicture}
 }
\newcommand\Gdd[2]{
 \begin{tikzpicture}[scale=1,baseline=-3pt]
 \tbond{l}{0,0};	% triple bond

 \StrBefore{#2}{,}[\labelone]
 \StrBehind{#2}{,}[\labeltwo]
 \StrChar{#1}{1}[\nodetype];

 \DDnode{\nodetype}{0,0}{\labelone};
 \StrChar{#1}{2}[\nodetype];
 \DDnode{\nodetype}{1,0}{\labeltwo};
 \useasboundingbox (-.4,-.2) rectangle (1.4,0.55); 
 \end{tikzpicture}
 }
\newcommand\Edd[2]{
 \begin{tikzpicture}[scale=1,baseline=-3pt]
 \foreach \x in {0,1,2,3} {
	\bond{\x,0};
 }
 \vbond{2,0};
 
 \StrLen{#1}[\Ernk]
 
 \StrChar{#1}{1}[\nodetype];
 \DDnode{\nodetype}{0,0}{\StrBefore{#2}{,}};
 \StrChar{#1}{2}[\nodetype];
 \DDnode{\nodetype}{2,-1}{\StrBetween[1,2]{#2}{,}{,}};
 \StrChar{#1}{3}[\nodetype];
 \DDnode{\nodetype}{1,0}{\StrBetween[2,3]{#2}{,}{,}};
 \StrChar{#1}{4}[\nodetype];
 \DDnode{\nodetype}{2,0}{\StrBetween[3,4]{#2}{,}{,}};
 \StrChar{#1}{5}[\nodetype];
 \DDnode{\nodetype}{3,0}{\StrBetween[4,5]{#2}{,}{,}};
 \StrChar{#1}{6}[\nodetype];

 \ifthenelse{\equal{\Ernk}{6}}{
 		\DDnode{\nodetype}{4,0}{\StrBehind[5]{#2}{,}};
 		\useasboundingbox (-.4,-1.2) rectangle (4.4,0.55);
	}{}%
 
 \ifthenelse{\equal{\Ernk}{7}}{
 		\bond{4,0};
 		\DDnode{\nodetype}{4,0}{\StrBetween[5,6]{#2}{,}{,}};
		\StrChar{#1}{7}[\nodetype];
		\DDnode{\nodetype}{5,0}{\StrBehind[6]{#2}{,}};
 		\useasboundingbox (-.4,-1.2) rectangle (5.4,0.55);
	}{}%

 \ifthenelse{\equal{\Ernk}{8}}{
 		\bond{4,0};
 		\bond{5,0};
 		\DDnode{\nodetype}{4,0}{\StrBetween[5,6]{#2}{,}{,}};
		\StrChar{#1}{7}[\nodetype];
		\DDnode{\nodetype}{5,0}{\StrBetween[6,7]{#2}{,}{,}};
		\StrChar{#1}{8}[\nodetype];
		\DDnode{\nodetype}{6,0}{\StrBehind[7]{#2}{,}};
		\useasboundingbox (-.4,-1.2) rectangle (6.4,0.55);
	}{}%

 \end{tikzpicture}
 }
 \newcommand\hol{\mathfrak{hol}}
 \newcommand\proj{\operatorname{proj}}
 \newenvironment{psm}
  {\left(\begin{smallmatrix}}
  {\end{smallmatrix}\right)}
 \newcommand\we{\widetilde{e}}
 \newcommand\wfg{\widetilde\fg}
 \newcommand\wfp{\widetilde\fp}
 \newcommand\wfq{\widetilde\fq}
\newtheorem{theorem}{Theorem}[section]
\newtheorem{lemma}[theorem]{Lemma}
\newtheorem{prop}{Proposition}[section]
\theoremstyle{defn}
\newtheorem{defn}[theorem]{Definition}
\theoremstyle{problem}
\newtheorem{example}[theorem]{Example}
\theoremstyle{remark}
\newtheorem{remark}[theorem]{Remark}
\numberwithin{equation}{section}
\begin{document}

 \title[Conformal structures with $G_2$-symmetric twistor distribution]{Conformal structures with\\ $G_2$-symmetric twistor distribution}

 \author{Pawel Nurowski}
 \address{Center for Theoretical Physics, Polish Academy of Sciences, Al. Lotników 32/46, 02-668, Warszawa, Poland\\
 and Guangdong Technion - Israel Institute of Technology, 241 Daxue Road, Jinping District, Shantou, Guangdong Province, China}
 \email{nurowski@cft.edu.pl}

 \author{Katja Sagerschnig}
 \address{Center for Theoretical Physics, Polish Academy of Sciences, Al. Lotników 32/46, 02-668, Warszawa, Poland\\
 and Guangdong Technion - Israel Institute of Technology, 241 Daxue Road, Jinping District, Shantou, Guangdong Province, China}
 \email{katja@cft.edu.pl}

 \author{Dennis The}
 \address{Department of Mathematics \& Statistics, UiT The Arctic University of Norway, N-9037, Troms\o, Norway}
 \email{dennis.the@uit.no}

 \begin{abstract}
 For any 4D split-signature conformal structure, there is an induced twistor distribution on the 5D space of all self-dual totally null 2-planes, which is $(2,3,5)$ when the conformal structure is not anti-self-dual.  Several examples where the twistor distribution achieves maximal symmetry (the split-real form of the exceptional simple Lie algebra of type $\mathrm{G}_2$) were previously known, and these include fascinating examples arising from the rolling of surfaces without twisting or slipping.  Relaxing the rolling assumption, we establish a complete local classification result among those homogeneous 4D split-conformal structures for which the symmetry algebra induces a multiply-transitive action on the 5D space.  Furthermore, we discuss geometric properties of these conformal structures such as their curvature, holonomy, and existence of Einstein representatives.
 \end{abstract}

 \date{\today}
 
 \subjclass[2020]{Primary: 53C30, 53Z05}
 
 % 53C30: Differential geometry of homogeneous manifolds
 % 53Z05: Applications of differential geometry to physics

\keywords{Symmetry, conformal structure, $(2,3,5)$-distribution, homogeneous, Cartan connection}

 \maketitle

\tableofcontents
%%%%%%%%%%%%%%%%%%%%%%%%%%%%%%%%%%%%%%%%%%%%%%%%%%%%%%%%%%%%

\section{Introduction}

One of the unexpected appearances of the split-real form $\fg$ of the exceptional simple Lie algebra of type $\mathrm{G}_2$ is the following. Consider a ball with radius $r$ rolling on another ball with radius $R$ without slipping or twisting. Then the configuration space is a $5$-manifold $N$, and a curve represents `rolling without slipping or twisting' if and only if it is everywhere tangent to a certain rank two distribution $\cD = \cD_{r:R}\subset TN$ -- the {\sl rolling distribution} of the two balls. Provided that $r\neq R$, $\cD$ is a $(2,3,5)$-distribution, i.e. $[\cD,\cD]$ has constant rank $3$  and $[\cD,[\cD,\cD]]=TN$. The distribution $\cD$ always has a natural $\mathfrak{so}(3)\times\mathfrak{so}(3)$-symmetry, but it turns out that the Lie algebra of (infinitesimal) symmetries of $\cD$ is isomorphic to $\fg=\mathsf{Lie}(\mathrm{G}_2)$ if and only if the ratio of the radii of the two balls is $1:3$ or $3:1$, see \cite{Agr2007, AN2014, BH2014, BM2009,The2022,Zel2006}.

 In \cite{AN2014}, the authors asked whether there are
  other pairs of surfaces whose rolling distribution has symmetry $\fg=\mathsf{Lie}(\mathrm{G}_2)$. In the course of their investigations, they identified the rolling distribution as a special case of the following more general construction. Consider a $4$-manifold $M^4$ equipped with an {\sl oriented split-signature conformal structure} $[\sfg]$.  It has two families of totally null $2$-planes: self-dual (SD) and anti-self-dual (ASD) ones. In particular, one can form the circle bundle 
 \begin{equation} \label{E:twistorbundle}
 \mathbf{S}^1\hookrightarrow \mathbb{T}^{+} \overset{\pi}{\longrightarrow }M^4
 \end{equation} 
 of all SD totally null $2$-planes.  The total space $N^5=\bbT^+$ is 5-dimensional and carries a natural rank two distribution $\cD$, called the {\sl twistor distribution}, see Section \ref{sec-construction} for the definition.\footnote{There is an analogous twistor distribution on the ASD side, but the majority of our article uses the SD construction, so we have chosen to not emphasize this via augmented notation, e.g.\ $(\ell^+,\cD^+)$.  Generally, SD will be implicit, but we will emphasize when we consider the ASD construction instead.} It is $(2,3,5)$  on  open subsets in $\mathbb{T}^+$ provided the SD part $\cW^+$ of the Weyl tensor of $[\sfg]$ is \emph{non-vanishing}.   Moreover, given  two Riemannian surfaces $(\Sigma_1,\sfg_1)$ and $(\Sigma_2,\sfg_2)$,  the twistor distribution of the conformal manifold $(\Sigma_1 \times \Sigma_2, [\sfg_1 - \sfg_2])$ can be naturally identified with the rolling distribution of the two surfaces.

 This leads to the more general question \cite{AN2014}: \emph{For which $4D$ split-conformal structures does the twistor distribution have (infinitesimal) $\mathrm{G}_2$-symmetry?}  Several of such  structures are already known.  One example is represented by Brinkmann's Ricci-flat pp-wave metric $\sfg=\mathsf{d}x\mathsf{d}u+\mathsf{d}y\mathsf{d}v+x^2\mathsf{d}v^2$ originating in \cite{Brinkmann1925}, for which $[\sfg]$ is submaximally symmetric and has 9-dimensional symmetry \cite{CDT2013, KT2014}. (In 4D, the maximum conformal symmetry dimension is 15.)  Two more examples come from the $\fsl(3,\bbR)$-invariant {\sl dancing metric} and its $\fsu(1,2)$-invariant analogue \cite{CDT2013, BLN2018, KM2023}.

 The aforementioned  rolling distribution of two balls with ratio of radii $1:3$ comes from a conformal structure with 6-dimensional symmetry. 
 Beyond this  rolling example, three more conformal structures of this sort where found in \cite{AN2014}. They correspond to surfaces of revolution  $(\Sigma_1,\sfg_1)$ rolling on the flat plane $(\Sigma_2 = \bbR^2, \sfg_2 = \mathsf{d}x^2 + \mathsf{d}y^2)$. Their conformal structures $[\sfg]$ are represented by $\sfg = \sfg_1 - \sfg_2$, with\
 \begin{align} \label{E:ANex}
 \sfg_1 = (\rho^2+\epsilon)^2 \mathsf{d}\rho^2 + \rho^2 \mathsf{d}\varphi^2, \qquad \epsilon \in \{ 0, \pm 1 \},
 \end{align}
 and their twistor/rolling distributions have $\mathrm{G}_2$-symmetry.  Concerning  conformal symmetries of $[\sfg]$, it admits the symmetries $\partial_\varphi, \partial_x, \partial_y, -y\partial_x + x\partial_y$.  When $\epsilon = \pm 1$, this is the entire conformal symmetry algebra of $[\sfg]$, and the conformal structure is {\em non-homogeneous}.  When $\epsilon = 0$, these symmetries are augmented by the scaling symmetry $\rho\partial_\rho + 2\varphi \partial_\phi + 3x\partial_x + 3y\partial_y$, making the $\epsilon = 0$ structure  {\em homogeneous}.  
 
 In our present article, we begin a systematic classification programme of $\mathrm{G}_2$-symmetric twistor distributions. We concentrate  on the conformally  homogeneous setting.
 
 In order to formulate our main result, let us make the following key observations: First, the vertical bundle $\ell = \ker(\pi_*)$ of \eqref{E:twistorbundle} is  distinguished. This leads us to consider on $\bbT^+$  the enhanced structure $(\ell,\cD)$, which we call the {\sl twistor XXO-structure of $[\sfg]$}. (The ``XXO" name comes from the marked Dynkin diagram encoding of the geometry.)   Diagrammatically, we have the following: 
 
 \begin{center}
 \begin{tikzcd} 
 \begin{tabular}{c}
 twistor XXO-structure\\ 
 $(N^5 = \bbT^+; \ell, \cD)$ 
 \end{tabular}
 \arrow[rrr,rightsquigarrow,"\mbox{$[\cD,\cD] \neq \cD$ on $U \subset N^5$}"]\arrow[d,"\pi"]&&& \begin{tabular}{c} $(2,3,5)$-distribution\\ $(U^5;\cD)$\end{tabular} \\ 
 \begin{tabular}{c}
 (oriented) split-conformal \\
 structure $(M^4;[\sfg])$
 \end{tabular}
 \end{tikzcd}
 \end{center}
 Moreover, infinitesimal conformal symmetries, i.e. conformal Killing fields of $(M^4,[\sfg])$, lift to to symmetries of the twistor XXO-structure, i.e. to vector fields on $N^5$ that via the Lie derivative preserve the pair of distributions $(\ell,\cD)$. Indeed the lift defines an isomorphism between the symmetry algebra of the conformal structure and that of the twistor XXO-structure.
 
From now on, we will restrict to conformal structures with {\sl multiply-transitive} twistor XXO-structures, i.e.\ we assume that the symmetry algebra $\ff$ of the XXO-structure acts transitively with isotropy algebra $\ff^0$ (at a generic point) having $\dim(\ff^0) \geq 1$.  Thus, we are focusing on homogeneous structures with $\dim(\ff) \geq 6$.  (In particular, the examples \eqref{E:ANex} lie outside the scope of our present investigations.)   Our main result is:

 \begin{theorem} \label{T:main}
The complete local classification of $4D$ split-conformal structures with multiply-transitive twistor XXO-structure and $\mathrm{G}_2$-symmetric twistor distribution is given in Table \ref{F:main}.
 \end{theorem}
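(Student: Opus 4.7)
The approach is to use the $\mathrm{G}_2$-symmetry hypothesis on $\cD$ to reduce the classification to a purely Lie-theoretic problem about subalgebras of $\fg = \mathsf{Lie}(\mathrm{G}_2)$. First, by Cartan's classical equivalence theorem for $(2,3,5)$-distributions, the $\mathrm{G}_2$-symmetry of $\cD$ implies that $(N^5, \cD)$ is locally isomorphic to the flat model $(\mathrm{G}_2/P, \cD_{\mathrm{flat}})$, where $P$ is the parabolic corresponding to the short-root node of the $\mathrm{G}_2$ Dynkin diagram. I would identify $N^5$ with an open subset of $\mathrm{G}_2/P$ carrying the canonical filtration $\cD \subset [\cD,\cD] \subset TN$. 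Under this identification, the line field $\ell$ is a pointwise complement to $\cD$ inside $[\cD,\cD]$, and the XXO-symmetry algebra $\ff$ becomes a subalgebra of $\fg$ preserving $\ell$. Multiply-transitivity on $N^5$ together with $\dim \ff^0 \geq 1$ then forces $\dim \ff \geq 6$ and constrains $\ff^0$ to lie inside the explicit stabilizer of $\ell$ in $\fg$.

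The core step is to enumerate, up to $\mathrm{G}_2$-conjugacy, the subalgebras $\ff \subseteq \fg$ of dimension at least $6$ admitting an $\ff$-invariant pointwise line $\ell$ transverse to $\cD$ inside $[\cD,\cD]$. I would organize this by the Levi decomposition $\ff = \mathfrak{s} \ltimes \mathrm{rad}(\ff)$, invoking the classification of semisimple subalgebras of split $\fg$ (the candidates being $\mathfrak{sl}_3(\bbR)$, $\mathfrak{su}(1,2)$, $\mathfrak{sl}_2 \oplus \mathfrak{sl}_2$, and $\mathfrak{sl}_2(\bbR)$ in its various non-conjugate embeddings, together with tori), and for each $\mathfrak{s}$ enumerating the compatible solvable extensions consistent with $\dim\ff \geq 6$, transitivity on $\mathrm{G}_2/P$, and preservation of an admissible $\ell$. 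All of this is carried out using the root data of $\ff$ relative to the $P$-grading $\fg = \fg_{-3} \oplus \cdots \oplus \fg_{3}$, in which $\ell$ corresponds (at the base point) to a complement of $\fg_{-1}$ inside $\fg_{-1} \oplus \fg_{-2}$.

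For each admissible pair $(\ff, \ell)$, the rank-$1$ line field $\ell$ integrates to a local foliation whose leaf space is a $4$-manifold $M^4$; one pushes down the self-dual null $2$-plane structure from the flat $\mathrm{G}_2$-model to recover the conformal class $[\sfg]$, then writes it in coordinates adapted to $\ff$ to produce an entry of Table \ref{F:main}. Inequivalence of distinct entries is checked via invariants such as the conformal symmetry dimension, Petrov type of $\cW^+$, Einstein-ness, and holonomy. The principal technical obstacle is the completeness of the subalgebra enumeration: one must rule out every conjugacy class with $\dim\ff \geq 6$ that either fails to preserve an admissible $\ell$ or fails to act multiply-transitively on $\mathrm{G}_2/P$, and for each surviving $\ff$ one must parametrize the inequivalent choices of $\ell$ modulo the normalizer of $\ff$ in $\mathrm{G}_2$. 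The delicate root-theoretic bookkeeping is most intricate in the borderline cases $\dim \ff = 6, 7$, where several non-conjugate embeddings can occur simultaneously, and this is where the bulk of the proof effort will be concentrated.
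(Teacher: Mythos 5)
Your first step agrees with the paper: since the Cartan quartic of $\cD$ vanishes, the XXO symmetry algebra $\ff$ embeds into $\fg = \Lie(\mathrm{G}_2)$ as a filtered subalgebra preserving a line $\ell$ transverse to $\cD$ in $[\cD,\cD]$, and the classification becomes Lie-theoretic. (The paper organizes the enumeration not by Levi decompositions of $\ff$ but by the subalgebra $\tgr_0(\ff^0)\subseteq\fg_0\cong\fgl_2$ determined by the $(2,3,5)$-grading, classifying filtered deformations normalized by the $P$-action rather than by $\mathrm{G}_2$-conjugacy; that is a legitimate organizational difference, not a gap.) The genuine gap is in your descent step. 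You assert that for each invariant pair $(\ff,\ell)$ one "pushes down the self-dual null $2$-plane structure from the flat $\mathrm{G}_2$-model to recover the conformal class." The flat $(2,3,5)$-model carries no conformal structure to push down, and it is \emph{not} true that an arbitrary $\ff$-invariant line field $\ell$ transverse to $\cD$ yields a conformal structure on the local leaf space of $\ell$ whose twistor XXO-structure is $(\ell,\cD)$. The precise obstruction is the harmonic torsion $\cT:\ell\times TN/\cH\to\cD$: by Proposition \ref{P:T} (via the correspondence-space characterization of \v{C}ap), an XXO-structure is locally a twistor XXO-structure of a $4$D split-conformal structure if and only if $\cT\equiv 0$. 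Your proposal never imposes this condition, so your enumeration would retain invariant pairs $(\ff,\ell)$ with $\mathrm{G}_2$-symmetric $\cD$ that correspond to no conformal structure at all, and the passage from the subalgebra list to Table \ref{F:main} would fail for those entries.

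This is not a cosmetic omission: in the paper's classification the condition $\cT\equiv 0$ does decisive work. For instance, in the nilpotent-isotropy branch the invariant line fields form a one-parameter family $L = X_3 + aX_2$, and imposing $\cT\equiv 0$ yields the cubic constraint \eqref{E:M6Ntorsion}, $a(a-3)(a-4)=0$, which is what isolates the genuinely new model $\mathsf{M6N}$ (and discards or identifies the other values of $a$ with $\mathsf{M8}$, $\mathsf{M9}$); a similar torsion computation eliminates a parameter in another branch. Without a concrete mechanism for this step your classification would be both incomplete as an argument and over-full as a list. A secondary point: since the theorem classifies structures whose \emph{full} symmetry algebra acts multiply transitively, you also need a maximality/embedding analysis (the paper's condition (X.5) and the explicit checks that various candidate pairs embed into $\mathsf{M9}$, $\mathsf{M8}$ or $\mathsf{M7}$); enumerating all subalgebras with $\dim\ff\geq 6$ admitting an invariant $\ell$ is not yet the same as identifying the actual symmetry algebras, and conflating the two can produce duplicate or spurious table entries.
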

 
\begin{table}[H]
 \[
 \begin{array}{|l|c|c|c|} \hline
 \mbox{Label} & \mbox{Representative metric $\sfg$} & \mbox{Petrov type }& \begin{tabular}{c} Conformal \\ symmetry algebra \end{tabular}
 %\begin{aligned} &\quad\mbox{Conformal}\\ &\mbox{symmetry algebra}\end{aligned}
 \\ \hline\hline
 \mathsf{M9} & \mathsf{d}x\mathsf{d}u+\mathsf{d}y\mathsf{d}v+x^2\mathsf{d}v^2  &\mathsf{N.O}  &\fp_1 \\ \hline
 \mathsf{M8.1}& 
\frac{2}{(v+ux-y)^2}\left(u\mathsf{d}v \mathsf{d}x-\mathsf{d}v \mathsf{d}y-(v-y)\mathsf{d}u\mathsf{d}x-x\mathsf{d}u\mathsf{d}y\right)
 & \sfD^+.\sfO &\mathfrak{sl}(3,\bbR) \\ \hline
 \mathsf{M8.2} & 
 \begin{aligned}
\tfrac{1}{(2 u+x^2+y^2)^2}&(\mathsf{d}u^2+\mathsf{d}v^2-2y\mathsf{d}v\mathsf{d}x+2x\mathsf{d}v\mathsf{d}y\\+2x&\mathsf{d}u\mathsf{d}x
+ 2y\mathsf{d}u\mathsf{d}y-2u\mathsf{d}x^2-2u\mathsf{d}y^2)
\end{aligned}
 & \sfD^-.\sfO &\mathfrak{su}(1,2) \\ \hline
 \begin{array}{@{}l@{}} \mathsf{M7}_\sfa \\ \mbox{{\tiny $(\sfa^2 \in \bbR^\times)$}} \\ \mathsf{M7_0^\pm} \end{array} & 
 \begin{aligned}
&{ 9(2 \epsilon r^2 + 4 r x + y^2 - 1)\mathsf{d}u^2+12\epsilon( r y + 2 x  )\mathsf{d}u\mathsf{d}v}\\& -12\epsilon   \mathsf{d}u\mathsf{d}x+    (12 r^2- 6y-10 \epsilon ) \mathsf{d}v^2+ 6  \mathsf{d}v\mathsf{d}y, \\& \mbox{where}\quad  r=\vert \sfa\vert\geq 0,\quad \epsilon=\begin{cases}
\mathrm{sgn}(\sfa^2), & \sfa \neq 0;\\
 \pm 1, & \sfa = 0
\end{cases}
\end{aligned}
 &\begin{array}{@{}l@{\,}l@{}} 
 \sfN.\sfO: & \sfa^2 = \frac{4}{3}\\
 \sfN.\sfN: & \mbox{\tiny otherwise}
 \end{array}
 & \bbR^2 \ltimes \mathfrak{heis}_5\\ \hline
 \begin{array}{@{}c} \mathsf{M6S}.1\\ \mathsf{M6S}.2\\ \mathsf{M6S}.3\end{array}& 
 \begin{aligned}
&\frac{\mathsf{d}x^2+\epsilon \mathsf{d}y^2}{(1+\kappa(x^2+\epsilon y^2))^2} - \frac{\mathsf{d}u^2+\epsilon \mathsf{d}v^2}{(1+9\kappa(u^2+\epsilon v^2))^2},\\&\mbox{where}\quad \begin{array}{c|ccc} & \mathsf{M6S}.1 & \mathsf{M6S}.2 & \mathsf{M6S}.3\\ \hline (\kappa,\epsilon) & (1,-1) & (-1,1) & (1,1) \end{array}
\end{aligned}
 & \begin{array}{c}\sfD^+.\sfD^+ \\ \sfD^-.\sfD^- \\ \sfD^-.\sfD^- \end{array} & 
 \begin{array}{c} 
 \fsl(2,\bbR) \times \fsl(2,\bbR)\\
 \fsl(2,\bbR) \times \fsl(2,\bbR)\\
 \fso(3) \times \fso(3)
 \end{array} \\ \hline
 \mathsf{M6N} & 
 \mathsf{d}y\mathsf{d}u+ \mathsf{d}x\mathsf{d}v+2 e^{2 v}\mathsf{d}x \mathsf{d}y-2 e^{2 v}u\mathsf{d}y^2- u\mathsf{d}y\mathsf{d}v
 &\mathsf{III.O} &\mathfrak{aff}(2) \\ \hline
 \end{array}
 \] 
 \caption{All $4D$ split-conformal structures $(M^4;[\sfg])$ with multiply-transitive twistor XXO-structure $(N^5; \ell, \cD)$ and  $\mathrm{G}_2$-symmetric twistor distribution $(N^5; \cD)$}
 \label{F:main}
 \end{table}

%Model $\mathsf{M9}$ is represented by the Ricci flat pp wave metric, which is known to have submaximal Lie algebra of conformal Killing fields. Model $\mathsf{M8_d}$ is represented by the so-called dancing metric, and model $\mathsf{M8_f}$ is represented by a split-signature analog of the Fubini Study metric. There is a 1-parameter family of inequivalent models $\mathsf{M7}$. For the special values $a=\tfrac{2}{\sqrt{3}}$ these are equivalent to the twistor distributions for the metrics $g=\mathsf{d}x\mathsf{d}u+\mathsf{d}y\mathsf{d}v-qx^{q-1}(\mathsf{d}v)^2,$ $q\in\{\tfrac{1}{2},\tfrac{3}{2}\}$.
%Models $\mathsf{M6_s}$ are represented by rolling metrics...

  Equally important in our story are the techniques used to derive our main result above, and alternative manners of  presenting it.  For any homogeneous structure, we emphasize three {\em equivalent} ways of presenting it: (i) in local coordinates, (ii) Lie-theoretic, and (iii) Cartan-theoretic.  The Petrov types listed above concern the root types of the SD part $\cW^+$ and ASD part $\cW^-$ of the Weyl tensor, viewed as a binary quartic in spinorial language.  We circumvent working directly with spinorial Weyl tensor coefficients, and instead demonstrate how these Petrov types are efficiently obtained via (SD \& ASD) twistor XXO-structures starting from each of the presentations (i)--(iii) above.
 
 The Cartan-theoretic presentation is the most abstract, but has the advantage of giving us a clear understanding of the curvature of these geometries, and an easy way to compute their (conformal) holonomy (Table \ref{F:CHol}).  In turn, this allows us to efficiently {\em algebraically} assess which of the  conformal structures  $[\sfg]$ admit  Einstein representatives on an open subset (Theorem \ref{T:AE}).

%%%%%%%%%%%%%%%%%%%%%%%%%%%%%%%%%%%%%%%%%%%%%%%%%%%%%%%%%%%% 

 \section{Geometric structures}

%%%%%%%%%%%%%%%%%%%%%%%%%%%%%%%%%%%%%%%%%%%%%%%%%%%%%%%%%%%% 

 \subsection{$(2,3,5)$-distributions}
 
%%%%%%%%%%%%%%%%%%%%%%%%%%%%%%%%%%%%%%%%%%%%%%%%%%%%%%%%%%%% 
 
  The study of $(2,3,5)$-distributions has a long history, originating in the seminal 1910 work of \'Elie Cartan \cite{Car1910}.
\begin{defn}
 A {\sl $(2,3,5)$-distribution} $\cD$ on a $5$-manifold $N$ is a rank 2 subbundle $\cD\subset TN$ such that  $[\cD,\cD]$ is a rank $3$ subbundle of $TN$ and $[\cD,[\cD,\cD]] = TN$. 
 \end{defn}
 
Any $(2,3,5)$-distribution can be locally specified in terms of a function $F=F(x,y,z,p,q)$ satisfying $F_{qq}\neq 0$. Namely, there exist local coordinates $(x,y,p,q,z)$  such that the distribution is the common kernel of the $1$-forms
\begin{equation}\label{Monge}\mathsf{d}y-p\mathsf{d}x,\quad\mathsf{d}p-q\mathsf{d}x, \quad\mathsf{d}z-F \mathsf{d}x,
\end{equation}
or equivalently, it is spanned by the vector fields
$\partial_q$
%,\quad\mbox{and}\quad
and $\partial_x+p\partial_y+q\partial_p+F \partial_z.$
Infinitesimal symmetries of $\cD$ are vector fields $X\in\Gamma(TN)$ such that $\mathcal{L}_XY=[X,Y]\in\Gamma(\cD)$ for all $Y\in\Gamma(\cD)$. A local model of the maximally symmetric $(2,3,5)$-distribution whose Lie algebra of infinitesimal symmetries is the split-real form of the $14$-dimensional exceptional simple Lie algebra of type $\mathrm{G}_2$ can, for example, be obtained by choosing $F=q^2$ in \eqref{Monge}.

 %%%%%%%%%%%%%%%%%%%%%%%%%%%%%%%%%%%%%%%%%%%%%%%%%%%%%%%%%%%%  
 
 \subsection{Geometry on the circle twistor bundle}
 \label{sec-construction}
 
 %%%%%%%%%%%%%%%%%%%%%%%%%%%%%%%%%%%%%%%%%%%%%%%%%%%%%%%%%%%% 
 
In \cite{AN2014}, the construction of $(2,3,5)$-distributions from rolling surfaces was interpreted as a special case of a construction familiar from twistor theory.
 
  Let $(M,[\sfg])$ be a smooth, oriented, conformal $4$-manifold of signature $(2,2)$. Locally, one can always find a null coframe such that the metric $\sfg$ and volume form $\mathrm{vol}_\sfg$ are expressed as
 \begin{align}\label{null-cof}
 \sfg = \sfg_{ij}\theta^i\theta^j = 2(\theta^1\theta^2+\theta^3\theta^4), \quad 
 \mathrm{vol}_\sfg=\theta^1\wedge\theta^2\wedge\theta^3\wedge\theta^4,
%\mbox{where} \quad \theta^i\theta^j:=\tfrac{1}{2}(\theta^i\otimes\theta^j+%\theta^j\otimes\theta^i),
 \end{align}
 where $\theta^i\theta^j:=\tfrac{1}{2}(\theta^i\otimes\theta^j+\theta^j\otimes\theta^i)$.
% and the volume form as $. 
 Then the Hodge-star operator $\star:\Lambda^2 T^*M\to\Lambda^2 T^*M$, defined via the relation $\alpha\wedge *\beta=\sfg(\alpha,\beta)\mathrm{vol}_\sfg$, satisfies
\begin{equation}
\begin{aligned}
 &\star \theta^1\wedge\theta^2=-\theta^3\wedge\theta^4,\quad \star\theta^1\wedge\theta^3=-\theta^1\wedge\theta^3,\quad   \star\theta^1\wedge\theta^4=\theta^1\wedge\theta^4,\\
 &\star\theta^3\wedge\theta^4=-\theta^1\wedge\theta^2,\quad   \star\theta^2\wedge\theta^4=-\theta^2\wedge\theta^4,\quad   \star\theta^2\wedge\theta^3=\theta^2\wedge\theta^3.
\end{aligned}
\end{equation}
Let $\Pi_x\subset T_xM$ be a totally null 2-plane and  suppose that $\phi$ and $\psi$ span its annihilator. Then $\Pi$ is {\sl self-dual (SD)} (respectively, {\sl anti-self-dual (ASD)}) if and only if $\phi\wedge\psi$ is an eigenvector of the Hodge star operator with eigenvalue $+1$ (respectively, $-1$).
 
 The set of all totally null 2-planes in $T_xM$ is a disjoint union of the SD and ASD ones, denoted $\bbT^{+}(T_xM)$ and $\bbT^-(T_xM)$ respectively, each of which is diffeomorphic to $\mathbf{S}^1$.  Let us focus on SD family.  (Considerations for the ASD family are similar.)  One can form the {\sl circle twistor bundle} 
 \begin{align}
 \mathbf{S}^1\hookrightarrow \bbT^+ := \bigsqcup_{x\in M}\bbT^+(T_xM) \overset{\pi}{\longrightarrow }M,
 \end{align}
where $\pi$ is the natural projection $(x,\Pi)\mapsto x$.  The total space $\bbT^+$ is naturally equipped with the following geometric structures:
\begin{itemize}
\item A rank one vertical bundle $\ell = \ker(\pi_*) \subset T\bbT^+$.
\item  A canonical rank three distribution $\cH \subset T\bbT^+$:  A point $p=(x,\Pi)\in\bbT^+$ corresponds to a SD totally null $2$-plane $\Pi\subset T_xM$ and we define $\cH_{p}:=\{ X\in T_p\bbT^+ \ \vert\ \pi_* X\in\Pi \}$.
%the fibre $\cH^{\pm}_{p}$  is  its preimage $\cH^{\pm}_{p}:=(T_{p}\pi)^{-1}(\Pi).$ 
This distribution is {\sl maximally non-integrable}, i.e.\ $[\cH,\cH]=T\bbT^+$.
\item Maximal non-integrability means that the map $\Lambda^2\cH\to T\bbT^+ / \cH$ (induced by the Lie bracket) is surjective, so has a $1$-dimensional kernel. The kernel is spanned by decomposable elements and thus defines a rank two distribution $\cD$ uniquely characterized by the properties
 \begin{align} \label{E:D-def}
 \cD \subset \cH, \qbox{and} [\cD,\cD] \subset \cH.
 \end{align}
\end{itemize}

 \begin{defn}
 The canonical rank two distribution $\cD$ on $\bbT^+$ is called the (SD) {\sl twistor distribution}. The structure $(\ell, \cD)$ is called the (SD) {\sl twistor XXO-structure}. We have $\cH = \ell \op \cD$.
 \end{defn}

A vector field $X$ on $\bbT^+$ such that $\mathcal{L}_X\Gamma(\cD)\subset\Gamma(\cD)$ and $\mathcal{L}_X\Gamma(\ell)\subset\Gamma(\ell)$ is called an {\sl infinitesimal symmetry} of the XXO-structure.   Conformal symmetries of $[\sfg]$ naturally lift from $M$ to $\bbT^+$. Indeed, the lift defines an isomorphism between the symmetry algebras of the conformal structure $(M,[\sfg])$ and that of the XXO-geometry on $\bbT^+$. This follows most easily from the Cartan geometric description of the construction discussed in Section \ref{correspondence}. The dimension of the symmetry algebra of the twistor distribution $\cD$ alone is in general larger than that of the conformal structure.

%Conformal symmetries of $[\sfg]$ naturally lift from $M$ to $\bbT^+$. Indeed, the lift defines an isomorphism between the Lie algebra of infinitesimal symmetries of the conformal structure $(M,[\sfg])$ and the Lie algebra of infinitesimal symmetries of the XXO-geometry on $\mathbb{T}_{\pm}$. This follows most easily from the Cartan geometric description of the construction discussed in Section \ref{correspondence}. The dimension of the Lie algebra of infinitesimal symmetries of the twistor distribution $\cD^{\pm}$ alone is in general larger than that of the conformal structure.

%Explicitly, the selfdual twistor distribution $\cD^+$ can be obtained as follows (switching the orientation interchanges $\cD^+$ and $\cD^-$). 
 To obtain the twistor distribution $\cD$ explicitly,  let $(\theta^1, \theta^2, \theta^3, \theta^4)$ be a positively oriented null (local) coframing on $M$ as in \eqref{null-cof} and let $(e_1, e_2, e_3, e_4)$ be the dual (local) framing. The set of all SD totally null $2$-planes at a point\footnote{We note that the ASD totally null 2-planes are given by $\langle s e_1 + t e_4, s e_3 - t e_2 \rangle$.} can be parametrized  as follows
 \begin{align}
 \Pi(s,t)=\langle s e_1+ t e_3, s e_4-t e_2 \rangle =\mathsf{ker}(s\theta^2+t\theta^4, s\theta^3-t\theta^1), \quad [s:t]\in\bbR\bP^1\cong\bS^1.
 \end{align}
Let us work in a local trivialization $\bbT^+|_U := \pi^{-1}(U) \to U \times \bbR\bP^1$. Restrict to the open subset where $s\neq 0$ and introduce an affine (fibre) coordinate $\xi=\tfrac{t}{s}$.
%, so
 %\begin{align}
 %\Pi(\xi) = \langle e_1+ \xi e_3, e_4 - \xi e_2 \rangle=\mathsf{ker}(\omega^1,\omega^2), \quad \xi \in \bbR.
 %\end{align}
% We then have 
% \begin{align}
% \ell = \langle \partial_\xi \rangle, \quad 
% \cH = \langle \partial_\xi, e_1+ \xi e_3, e_4 - \xi e_2 \rangle,
% \end{align}
% from which maximal non-integrability of $\cH$ is clear, and $\cD$ is defined by \eqref{E:D-def}.
 
%of selfdual 2-planes  spanned by $e_1+\xi e_3$ and  $\xi e_2-  e_4$ for $\xi\in\bbR$. 

Then $\cH = \mathsf{ker}(\omega^1,\omega^2)$, with
\begin{equation}
\label{om12}
\begin{aligned}
\omega^1 = \theta^2 + \xi\theta^4, \quad
\omega^2 = \theta^3 - \xi\theta^1,
\end{aligned}
\end{equation}
where we have omitted pullbacks to simplify notation. Moreover, $\cD = \mathsf{ker}(\omega^1,\omega^2,\omega^3)$, where $\omega^3$ is chosen such that $[\cD,\cD]\subseteq \cH$. Equivalently, this means that
\begin{equation}\label{DfromH}
\mathsf{d}\omega^1\wedge\omega^1\wedge\omega^2\wedge\omega^3=0,\quad \mathsf{d}\omega^2\wedge\omega^1\wedge\omega^2\wedge\omega^3=0.
\end{equation}

 At $p=(x,\Pi)\in\bbT^+$, $\cD_{p}$ can alternatively be obtained as the horizontal lift of $\Pi\subset T_x M$ with respect to a metric in $[\sfg]$ (and the result is independent of the choice of metric) \cite{AN2014}.
Let $\Gamma^i{}_j$ be the Levi-Civita connection $1$-forms for \eqref{null-cof} determined by the structure equations
 \begin{align}
 \mathrm{d}\theta^i+\Gamma^i{}_j\wedge\theta^j=0, \quad 
 \Gamma_{ij}+\Gamma_{ji}=0,
 \end{align}
 where $\Gamma_{ij}=\sfg_{ik}\Gamma^k{}_j$, and $\Gamma^i{}_{jk}$ are the connection coefficients defined via $\Gamma^i{}_j=\Gamma^i{}_{jk}\theta^k$. 
Then $\cD = \mathsf{ker}(\omega^1,\omega^2,\omega^3)$, for $\omega^1$, $\omega^2$ as in \eqref{om12}, and
\begin{equation}
\label{om3}
\begin{aligned}
\omega^3&=\mathsf{d}\xi-\Gamma^2{}_4+\xi(\Gamma^3{}_3+\Gamma^2{}_2)-\xi^2\Gamma^1{}_3 .
\end{aligned}
\end{equation}
Adding $\omega^4=\theta^1$ and $\omega^5=\theta^4$ completes these to a coframing. Then  a direct computation shows:
\begin{lemma} \label{lemm-Weyl}  We have
\begin{equation} \label{W(xi)}
%\mathsf{d}\omega^3\wedge\omega^1\wedge\omega^2\wedge\omega^3=\, %\mathcal{W}^+(\xi)\, %\omega^1\wedge\omega^2\wedge\omega^3\wedge\omega^4\wedge\omega^5,
\mathsf{d}\omega^3\wedge\omega^1\wedge\omega^2\wedge\omega^3=\, (\Psi_0+4\Psi_1\xi+6\Psi_2\xi^2-4\Psi_3\xi^3+\Psi_4\xi^4)\, \omega^1\wedge\omega^2\wedge\omega^3\wedge\omega^4\wedge\omega^5,
\end{equation}
where
the scalars $\Psi_0,\Psi_1,\Psi_2,\Psi_3,\Psi_4$ are the components of the SD Weyl tensor $\mathcal{W}^+$ in the notation from \cite[p.\ 330]{GHN2011}. 
 \end{lemma}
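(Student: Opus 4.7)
\medskip

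\noindent\textbf{Proof proposal.} The statement is an identity that one verifies by a direct structure-equation computation; the real content is recognizing the polynomial coefficients as the components of $\cW^+$. My plan is to compute $\mathsf{d}\omega^3$ modulo the ideal generated by $\omega^3$, wedge with $\omega^1\wedge\omega^2$, and then read off the horizontal $4$-form as a quartic in $\xi$.

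First, I differentiate \eqref{om3} to obtain
\[
\mathsf{d}\omega^3=-\mathsf{d}\Gamma^2{}_4+\mathsf{d}\xi\wedge(\Gamma^3{}_3+\Gamma^2{}_2)+\xi\,\mathsf{d}(\Gamma^3{}_3+\Gamma^2{}_2)-2\xi\,\mathsf{d}\xi\wedge\Gamma^1{}_3-\xi^2\,\mathsf{d}\Gamma^1{}_3.
\]
To form $\mathsf{d}\omega^3\wedge\omega^1\wedge\omega^2\wedge\omega^3$, I work modulo $\omega^3$, i.e.\ replace
\[
\mathsf{d}\xi\equiv\Gamma^2{}_4-\xi(\Gamma^3{}_3+\Gamma^2{}_2)+\xi^2\Gamma^1{}_3\pmod{\omega^3},
\]
and use the curvature equations $\mathsf{d}\Gamma^i{}_j=-\Gamma^i{}_k\wedge\Gamma^k{}_j+R^i{}_j$ to replace exterior derivatives of connection forms by curvature $2$-forms. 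After collecting, the quadratic $\Gamma\wedge\Gamma$ terms cancel against those coming from the substitution of $\mathsf{d}\xi$ (this is the torsion-free / first-Bianchi bookkeeping), so the horizontal part of $\mathsf{d}\omega^3$ reduces modulo $\omega^3$ to
\[
-R^2{}_4+\xi(R^3{}_3+R^2{}_2)-\xi^2 R^1{}_3,
\]
a quadratic polynomial in $\xi$ with coefficients in $\Lambda^2 T^*M$.

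Next I wedge with $\omega^1\wedge\omega^2=(\theta^2+\xi\theta^4)\wedge(\theta^3-\xi\theta^1)$, a self-dual decomposable $2$-form that annihilates the SD totally null $2$-plane $\Pi(\xi)$. In the null frame, this $2$-form expands as
\[
\omega^1\wedge\omega^2=\theta^2\wedge\theta^3-\xi(\theta^2\wedge\theta^1+\theta^4\wedge\theta^3)-\xi^2\theta^4\wedge\theta^1,
\]
which (up to the overall sign in the middle) is precisely the spinor-parametrization of the SD bivector by $\xi$. Wedging the displayed quadratic combination of $R^i{}_j$ with this quadratic combination of $\theta^a\wedge\theta^b$ produces a quartic in $\xi$ whose coefficients are four-forms on $M$, all proportional to the volume form $\theta^1\wedge\theta^2\wedge\theta^3\wedge\theta^4=\omega^1\wedge\omega^2\wedge\omega^4\wedge\omega^5$. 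Crucially, the trace (Ricci) part of the Riemann tensor pairs into $\omega^1\wedge\omega^2$ in a way that drops out: the bivector $V(\xi)$ is totally null, so $\mathrm{Ric}\cdot V(\xi)\cdot V(\xi)=0$, and similarly the ASD Weyl part is orthogonal to any SD bivector under the natural pairing. What survives is exactly the quadratic form $\langle\cW^+ V(\xi),V(\xi)\rangle$, which in the Newman--Penrose--style notation of \cite[p.\ 330]{GHN2011} equals $\Psi_0+4\Psi_1\xi+6\Psi_2\xi^2-4\Psi_3\xi^3+\Psi_4\xi^4$.

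The main obstacle is bookkeeping: keeping track of signs, confirming that the quadratic $\Gamma\wedge\Gamma$ contributions really do cancel, and matching the surviving curvature coefficients against the specific $\Psi_i$-normalization of \cite{GHN2011}. Once the cancellation of the Ricci/ASD-Weyl contributions is established on structural grounds (nullity of $V(\xi)$ and $\cW^+$ being the SD-to-SD block of the Weyl operator), the identification of the coefficients is a direct comparison with the standard spinor-dyad expansion of $\cW^+$.
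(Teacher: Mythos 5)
Your proposal is correct and is essentially the computation the paper alludes to when it states that the lemma follows from ``a direct computation'': reduce $\mathsf{d}\omega^3$ modulo $\omega^3$ to the curvature polynomial $-R^2{}_4+\xi(R^3{}_3+R^2{}_2)-\xi^2 R^1{}_3$, wedge with the decomposable SD $2$-form $\omega^1\wedge\omega^2$ annihilating $\Pi(\xi)$, observe that the Ricci part and $\cW^-$ drop out by total nullity of $\Pi(\xi)$ and (anti-)self-duality, and match the surviving quartic against the $\Psi_i$-normalization of \cite{GHN2011}. Two cosmetic corrections: the cancellation of the $\Gamma\wedge\Gamma$ terms is a consequence of the antisymmetry $\Gamma_{ij}=-\Gamma_{ji}$ in the null frame (metric compatibility), not of torsion-freeness or the first Bianchi identity, and the $\xi$-coefficient in your expansion of $\omega^1\wedge\omega^2$ should be $-\theta^2\wedge\theta^1+\theta^4\wedge\theta^3$, i.e.\ the self-dual combination $\theta^1\wedge\theta^2-\theta^3\wedge\theta^4$, rather than $-(\theta^2\wedge\theta^1+\theta^4\wedge\theta^3)$.
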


It follows that $\cD$ is integrable if and only if $\cW^+ \equiv 0$, i.e.\ $[\sfg]$ is ASD. Assuming that $\cW^+ \equiv 0$, one can pass to the local leaf space for the integrable distribution $\mathcal{D}$, which leads to the split-signature version of Penrose's twistor construction \cite{Penrose1967}, \cite[Sec.7.3 \& 7.4]{PR1986}.  In \cite{AN2014}, the opposite case was considered, i.e.\ when $\cW^+$ is \emph{non-vanishing}. It is shown that:

\begin{prop} Let $\cD$ be the twistor distribution on the total space $\bbT^+$ of the bundle of SD totally null $2$-planes. Then $\cD$ is a $(2,3,5)$-distribution on the open set of points in $\bbT^+$ where 
$$\cW^+(\xi):=\Psi_0+4\Psi_1\xi+6\Psi_2\xi^2-4\Psi_3\xi^3+\Psi_4\xi^4\neq 0.$$
 \end{prop}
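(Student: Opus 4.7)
My plan is to verify the two defining conditions of a $(2,3,5)$-distribution separately, namely that $[\cD,\cD]$ has constant rank $3$, and $[\cD,[\cD,\cD]] = T\bbT^+$, on the open locus where $\cW^+(\xi) \neq 0$. That $\cD$ itself has rank $2$ is automatic from its description as $\ker(\omega^1,\omega^2,\omega^3)$ on the $5$-manifold $\bbT^+$ (assuming the three $1$-forms are pointwise independent, which is transparent from \eqref{om12} and \eqref{om3} since $\mathsf{d}\xi$ appears in $\omega^3$ but not in $\omega^1,\omega^2$).

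The first key step is to identify $[\cD,\cD]$. By the defining property \eqref{DfromH} of $\omega^3$, the Frobenius-type products $\mathsf{d}\omega^1\wedge\omega^1\wedge\omega^2\wedge\omega^3$ and $\mathsf{d}\omega^2\wedge\omega^1\wedge\omega^2\wedge\omega^3$ vanish identically. Thus $[\cD,\cD]\subseteq \cH = \ker(\omega^1,\omega^2)$ holds everywhere, and consequently $[\cD,\cD]$ has rank $2$ or $3$ at each point, with rank $3$ precisely when $\mathsf{d}\omega^3\wedge\omega^1\wedge\omega^2\wedge\omega^3 \neq 0$. By Lemma \ref{lemm-Weyl}, this last quantity is a nonzero multiple of $\cW^+(\xi)$ times the volume form $\omega^1\wedge\cdots\wedge\omega^5$; hence on the open set $\{\cW^+(\xi)\neq 0\}$ we have $[\cD,\cD] = \cH$, a rank $3$ distribution.

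For the second bracket, I will use the fact (highlighted in Section \ref{sec-construction}) that $\cH$ is \emph{maximally non-integrable}, i.e.\ $[\cH,\cH] = T\bbT^+$. Since $\cH = \ell\oplus \cD$ as $C^\infty$-modules, bilinearity of the Lie bracket gives
\[
[\cH,\cH] = [\cD,\cD] + [\cD,\ell] + [\ell,\ell].
\]
The rank-$1$ subbundle $\ell$ is trivially involutive, so $[\ell,\ell]\subseteq\ell\subseteq\cH$, while $[\cD,\cD]+[\cD,\ell]\subseteq[\cD,\cH]$. Therefore $T\bbT^+ = [\cH,\cH] \subseteq \cH + [\cD,\cH]$. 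On the locus where already $[\cD,\cD] = \cH$, this gives $\cD + [\cD,[\cD,\cD]] = \cH + [\cD,\cH] = T\bbT^+$, which is the second $(2,3,5)$-condition.

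The only genuinely non-formal ingredient is step one, identifying the obstruction to $[\cD,\cD]=\cH$ as exactly $\cW^+(\xi)$; this is precisely what Lemma \ref{lemm-Weyl} supplies, so the main obstacle has already been absorbed into the prerequisites. The remaining work is the brief bookkeeping above, combining $[\cD,\cD]=\cH$ with the maximal non-integrability of $\cH$ to upgrade to $\cD^3 = T\bbT^+$.
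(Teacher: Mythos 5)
Your argument is correct and follows exactly the route the paper intends: the paper defers the proof to the cited work of An--Nurowski but sets up precisely your two ingredients, namely Lemma \ref{lemm-Weyl} (identifying $\mathsf{d}\omega^3\wedge\omega^1\wedge\omega^2\wedge\omega^3$ with $\cW^+(\xi)$ times the volume form, hence $[\cD,\cD]=\cH$ exactly where $\cW^+(\xi)\neq 0$) and the maximal non-integrability $[\cH,\cH]=T\bbT^+$ to force $[\cD,[\cD,\cD]]=T\bbT^+$ there. The only cosmetic point is that your decomposition of $[\cH,\cH]$ should be read modulo sections of $\cH$ (Leibniz terms), but since you only use the inclusion $[\cH,\cH]\subseteq \cH+[\cD,\cH]$, this does not affect the argument.
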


%%%%%%%%%%%%%%%%%%%%%%%%%%%%%%%%%%%%%%%%%%%%%%%%%%%%%%%%%%%%  

 \subsubsection{An Example}
 \label{sec-Ex} 
 
 %%%%%%%%%%%%%%%%%%%%%%%%%%%%%%%%%%%%%%%%%%%%%%%%%%%%%%%%%%%% 
 
 Let us illustrate the construction from Section \ref{sec-construction} on a special class of metrics that contains {\em conformally inequivalent} examples having $\mathrm{G}_2$-symmetric twistor distribution.
 Let
 \begin{align} \label{E:metric}
 \sfg=\mathsf{d}x\mathsf{d}u+\mathsf{d}y\mathsf{d}v-\Theta_{xx}\mathsf{d}v^2-\Theta_{yy}\mathsf{d}u^2+2\Theta_{xy}\mathsf{d}u\mathsf{d}v
 \end{align}
be the general Ricci-flat, SD, split-signature metric,
where the function $\Theta=\Theta(x,y,u,v)$ satisfies {\sl Pleba\'nski's second heavenly equation} \cite{Plebanski1975}:
 \begin{align}
 \Theta_{ux}+\Theta_{vy}+\Theta_{xx}\Theta_{yy}-\Theta^2_{xy}=0.
 \end{align}
%It can be written as $g=2(\theta^1\theta^2+\theta^3\theta^4)$ with respect to the null coframe
%\begin{align}\label{coframeTheta}\theta^1=\mathsf{d}x-\Theta_{yy}\mathsf{d}u+\Theta_{xy}\mathsf{d}v,\quad\theta^2=\mathsf{d}u,\quad\theta^3=\mathsf{d}y-\Theta_{xx}\mathsf{d}v+\Theta_{xy}\mathsf{d}u,\quad\theta^4=\mathsf{d}v.\end{align}
%Metrics of this form are Ricci flat and selfdual, i.e. the anti-selfdual Weyl tensor $C^{-}$ vanishes.
%Before discussing strategies for the proof, let us discuss more details of the construction on an  example.
%Another class of conformal structures and their twistor distributions is studied in \cite{An-Nu2}. T
In \cite{AN2014}, the special case $\Theta=\Theta(x)$ was considered with $\Theta^{(4)}=\Theta_{xxxx}\neq 0$. They showed that the associated twistor distribution $\cD$
has $\mathrm{G}_2$-symmetry if and only if $\Theta$ satisfies the ODE
 \begin{align} \label{ODE}
 10 (\Theta^{(4)})^3 \Theta^{(8)} - 70 (\Theta^{(4)})^2 \Theta^{(5)} \Theta^{(7)} 
 - 49 (\Theta^{(4)})^2 (\Theta^{(6)})^2 + 280 \Theta^{(4)} (\Theta^{(5)})^2 \Theta^{(6)} - 175(\Theta^{(5)})^4 = 0.
 \end{align}
 This is equivalent to $\Theta'$ being a solution to the 7th order ODE studied by Dunajski--Sokolov \cite{DS2011}.

 As a particular example, 
%let $\Theta = \frac{x^{l+1}}{l+1}$, which satisfies \eqref{ODE} if and only if $l\in \{-1,0, \tfrac{1}{2},1,\tfrac{3}{2},2,3\}$.  The metric \eqref{E:metric} is 
consider the metric of the form $\sfg=2(\theta^1\theta^2+\theta^3\theta^4)$, where
 \begin{align} \label{coframeTheta}
 \theta^1 = \mathsf{d}x, \quad 
 \theta^2 = \mathsf{d}u, \quad
 \theta^3 = \mathsf{d}y - lx^{l-1}\mathsf{d}v, \quad
 \theta^4 = \mathsf{d}v.
 \end{align}
 On the open subset with coordinates by $(x,y,u,v,\xi)$, we have $\cD = \mathsf{ker}(\omega^1,\omega^2,\omega^3)$, where
 \begin{align}\label{ex-om123}
 \omega^1 = \mathsf{d}u + \xi\mathsf{d}v, \quad
 \omega^2 = -\xi\mathsf{d}x+\mathsf{d}y-lx^{l-1}\mathsf{d}v, \quad
 \omega^3 = \mathsf{d}\xi+(1-l)lx^{l-2}\mathsf{d}v.
 \end{align}
(In \cite{AN2014}, a transformation is provided that brings \eqref{ex-om123} to the Monge form \eqref{Monge}.)  From \eqref{W(xi)}, 
 \begin{align}
 \cW^+(\xi) = l(l-1)(l-2)x^{l-3}.
 \end{align}
 Thus, $\cW^+ \equiv 0$ when $l \in \{ 0,1,2 \}$ (cases with $\Theta^{(4)} = 0$), in which case $\sfg$ is conformally flat and both SD and ASD twistor distributions are integrable.  Otherwise, $\cW^+$ is of type $\sfN$ (with quadruple root at $\xi = \infty$) and $\cD$ is $(2,3,5)$.  From \eqref{ODE}, $\cD$ is $\mathrm{G}_2$-symmetric if $l \in \{ -1, \tfrac{1}{2}, \tfrac{3}{2}, 3\}$.
 
 For any $l$, the symmetry algebra of $[\sfg]$ includes the 7-dimensional subalgebra
 \begin{equation}
 \begin{aligned}
 & \partial_y, \quad \partial_u, \quad \partial_v, \quad x\partial_y-v\partial_u, \quad -2 x^l\partial_y+y\partial_u-x\partial_v,\\
 & x\partial_x + (l-1)y\partial_y + (l-2)u\partial_u, \quad y\partial_y + 2u\partial_u + v\partial_v.
 \end{aligned}
 \end{equation}
 If $l\in\{\tfrac{1}{2},\tfrac{3}{2}\}$, this is the entire symmetry algebra.  When $l \in \{ -1, 3 \}$, it is augmented by two additional generators, e.g.\ when $l = 3$, we also have
 \begin{align}
 \partial_x + 6xv \partial_y - 3 v^2 \partial_u, \quad
 -v\partial_x + (u - 3xv^2) \partial_y + v^3 \partial_u.
 \end{align} 
 Thus, there are conformally-inequivalent cases here.  Referring to Theorem \ref{T:main}, we have:
 \begin{itemize}
 \item $\mathsf{M9}$: $l = -1$ and $l = 3$ (which are conformally equivalent);
 \item $\mathsf{M7}_\sfa$ for $\sfa^2 = \frac{4}{3}$: $l=\tfrac{1}{2}$ and $l=\tfrac{3}{2}$ (which are conformally equivalent).
 \end{itemize}

%%%%%%%%%%%%%%%%%%%%%%%%%%%%%%%%%%%%%%%%%%%%%%%%%%%%%%%%%%%%  

 \subsection{XXO structures}
 \label{S:XXO}
 
 %%%%%%%%%%%%%%%%%%%%%%%%%%%%%%%%%%%%%%%%%%%%%%%%%%%%%%%%%%%% 
 
 Motivated by the structure on the total space of the twistor bundle over a split-signature conformal structure, let us now introduce the general notion of an XXO-structure. 
 \begin{defn}
 An {XXO}-structure $(\ell,\cD)$ on a $5$-dimensional smooth manifold $N$ consists of a rank $2$ subbundle $\cD \subset TN$ and a  line subbundle $\ell \subset TN$  such that
 \begin{enumerate}
 \item $\ell \cap \cD = 0$,
 \item $[\Gamma(\cD),\Gamma(\cD)] \subset \Gamma(\cH)$, where $\cH := \ell \op \cD$,
 \item the Lie bracket induces an isomorphism $\ell \otimes \cD \cong TN/\cH$.
 \end{enumerate}
 The XXO-structure $(\ell,\cD)$ is {\sl integrable} if $\cD$ is (Frobenius) integrable.  Otherwise, it is {\sl non-integrable}.
 \end{defn}

 \begin{remark} 
 We note that for a general XXO-structure $(\ell,\cD)$, the derived distribution $[\cD,\cD]$ may not have constant rank.
 \end{remark}
 
 If $(\ell,\cD)$ is an integrable XXO-structure, then it is locally equivalent to a pair of 2nd order ODE. (This follows by combining Proposition 1.5 and Theorem 1.6 of \cite{Yam1983}.)
 
 \begin{example} \label{ex-ODE} The geometry (up to point-equivalence) of a pair of 2nd order ODE, 
 \begin{align} \label{E:2ODE}
 \ddot{x} = F(t,x,y,\dot{x},\dot{y}), \quad \ddot{y} = G(t,x,y,\dot{x},\dot{y}),
 \end{align}
 is encoded by an integrable XXO-structure as follows. Consider the 1st jet space $N = J^1(\bbR,\bbR^2)$, which has dimension $5$, and standard local coordinates $(t,x,y,p,q)$ for which the canonical Cartan distribution is $\cC = \ker\langle dx - p dt, dy - q dt \rangle = \langle \partial_t + p \partial_x + q \partial_y, \partial_p, \partial_q \rangle \subset TN$.  Define a splitting of $\cC $ into rank 1 and rank $2$ subdistributions
 \begin{align} \label{E:2ODE-LD}
 \ell = \langle \partial_t + p \partial_x + q \partial_y + F \partial_p + G \partial_q \rangle, \quad \cD = \langle \partial_p, \partial_q \rangle,
 \end{align}
 where $F$ and $G$ are functions of $(t,x,y,p,q)$.  Then $(\ell,\cD)$ is an integrable XXO-structure on $N$.
 
 \end{example}
 
 Augmenting a $(2,3,5)$-distribution with additional structure yields non-integrable XXO-structures:
 
 \begin{example}\label{ex-235+line}
 Consider the $\mathrm{G}_2$-symmetric $(2,3,5)$-distribution
 \begin{align}
 \cD=\left\langle D_x:=\partial_x+p\partial_y+q\partial_p+\tfrac{1}{2}q^2\partial_z,\,\partial_q\right\rangle
 \end{align}
 corresponding to $F=\tfrac{1}{2}q^2$ in \eqref{Monge}. Then $[\cD,\cD]/\cD$ is represented by $\partial_p+q\partial_z$ and we can define a non-integrable XXO-structure $(\ell,\cD)$ via any choice of line field
 \begin{align}
 \ell=\left\langle \partial_p+q\partial_z+A D_x+B\partial_q\right\rangle,
 \end{align}
 for functions $A,B$ of $(x,y,p,q,z)$.
 \end{example}
 
 In each of these examples, the distribution $\cD$ is the same, but different choices of line fields $\ell$ may lead to inequivalent XXO-structures.

%%%%%%%%%%%%%%%%%%%%%%%%%%%%%%%%%%%%%%%%%%%%%%%%%%%%%%%%%%%%  

 \section{Cartan geometric picture}
 
 %%%%%%%%%%%%%%%%%%%%%%%%%%%%%%%%%%%%%%%%%%%%%%%%%%%%%%%%%%%% 
 
 Many features of the construction in Section \ref{sec-construction} are best understood if one rephrases the construction in terms of the associated (parabolic) Cartan geometries. 

%%%%%%%%%%%%%%%%%%%%%%%%%%%%%%%%%%%%%%%%%%%%%%%%%%%%%%%%%%%% 

 \subsection{Parabolic geometries}
 \label{S:PG}

%%%%%%%%%%%%%%%%%%%%%%%%%%%%%%%%%%%%%%%%%%%%%%%%%%%%%%%%%%%% 

 Let $G/P$ be a homogeneous space with corresponding Lie algebras $\fp \subset \fg$.  A {\sl Cartan geometry} $(\cG \to M,\omega)$ of type $(\fg,P)$  is given by a right principal $P$-bundle $p:\cG\to M$ together with
 a {\sl Cartan connection} $\omega\in\Omega^1(\cG,\fg)$.  This satisfies:
 \begin{enumerate}
 \item[(i)] $\omega$ defines an isomorphism $\omega_u:T_u\cG\to \fg$ for any $u\in\cG$;
 \item[(ii)] $\omega$ is $P$-equivariant, i.e.\ $r_g^*\omega=\mathrm{Ad}_{g^{-1}}\circ\omega$ for any $g\in P$;
 \item[(iii)] $\omega$ maps fundamental vector fields to their generators, i.e.\ $\omega(\zeta_X)=X$ for any $X\in\fp$.
\end{enumerate}
 In particular, the Cartan connection $\omega$ provides an identification $\cG \times_P\fg/\fp\cong TM$.
A morphism between Cartan geometries $(\cG\to M,\omega)$ and $(\cG'\to M',\omega')$ is a principal bundle morphism $\Phi:\cG\to\cG'$ such that $\Phi^*\omega'=\omega$.

 The {\sl curvature} of $\omega$ is the $2$-form $K\in\Omega^2(\cG,\fg)$ defined as 
 \begin{align}
 K(X,Y)=\mathsf{d}\omega(X,Y)+[\omega(X),\omega(Y)]\quad\mbox{for}\quad X,Y\in\Gamma(T\cG).
 \end{align}
 It is $P$-equivariant, horizontal, and can be equivalently encoded by the {\sl curvature function}
 \begin{align}
 \kappa:\cG\to\Lambda^2(\fg/\fp)^*\otimes\fg,\quad \kappa(v,w):=K(\omega^{-1}(v),\omega^{-1}(w)) \quad\mbox{for}\quad v,w \in \fg.
 \end{align}
The curvature is a complete obstruction to local equivalence of $(\cG\to M, \omega)$ with the homogeneous model $(G\to G/P,\omega_G)$, where $\omega_G$ denotes the left-invariant Maurer--Cartan form on $G$.

A {\sl parabolic geometry} is a Cartan geometry of type $(\fg,P)$, where $\fg$ is semisimple, $\fp\subset\fg$ is a parabolic subalgebra, and $P \subset \Aut(\fg)$ is a Lie subgroup with Lie algebra $\fp$.  Parabolic subalgebras can be characterized by the property that the orthogonal complement $\fp^{\perp}$ of $\fp$ in $\fg$ with respect to the Killing form coincides with the nilradical of $\fp$, which will be denoted by $\fp_+$. It then follows that $\fp/\fp_+$ is reductive and $\fp_+\cong (\fg/\fp)^*$ as $\fp$-modules. Defining $\fg^1=\fp_+$, $\fg^i=[\fg^{i-1},\fp_+]$ for $i\geq 2$ and $\fg^{j+1}=(\fg^{-j})^{\perp}$ for $j\leq -1$, one obtains a ({\sl depth} $k$) filtration
 \begin{equation} \label{E:filt}
 \fg = \fg^{-k} \supset ... \supset \fg^0 \supset... \supset \fg^k,\quad
 [\fg^i,\fg^j]\subseteq \fg^{i+j},
\end{equation}
which is $P$-invariant with respect to the adjoint action.  The filtrand $\fg^i$ is said to be of {\sl homogeneity} $\geq i$.  There is a bijective correspondence between conjugacy classes of parabolic subalgebras and subsets of simple restricted roots, see \cite{CS2009} for details. In particular, parabolic subalgebras can be depicted by means of marked Satake diagrams. More precisely, a parabolic subalgebra $\fp$ is represented by crosses  on the nodes of the Satake diagram  corresponding to simple roots $\alpha_i$ such that the root space $\fg_{-\alpha_i}$ is not contained in $\fp$.

 Given the filtration \eqref{E:filt}, we can form its associated-graded Lie algebra $\tgr(\fg) = \bigoplus_i \tgr_i(\fg)$, where $\tgr_i(\fg) := \fg^i / \fg^{i+1}$, and the bracket is induced.  There is always a choice of grading element $\sfZ \in \fp$ that lifts this grading on $\tgr(\fg)$ to a grading on $\fg$ that is compatible with the filtration:
 \begin{align}
 \fg = \fg_{-k} \op ... \op \fg_0 \op ... \op \fg_k, \quad [\fg_i,\fg_j] \subseteq \fg_{i+j}, \quad \fg^i = \fg_{\geq i}.
 \end{align}
 Thus, $\fg^0 = \fp$ and $\fp_+ = \fg_{\geq 1}$.
 Here, $\sfZ \in \fz(\fg_0)$ (center of $\fg_0$), and $\fg_i$ is the eigenspace of $\ad_\sfZ$ for the eigenvalue $i \in \bbZ$, also called its {\sl homogeneity} (which is compatible with $\fg^i$ having homogeneity $\geq i$).  Eigenvalues of $\sfZ$ on any $\fg_0$-module will also be referred to as homogeneities.

A parabolic geometry is:
\begin{itemize}
\item {\sl regular} if $\kappa$ is of homogeneity $\geq 1$ with respect to this filtration, i.e.\ $\kappa_u(\fg^i,\fg^j)\subseteq\fg^{i+j+1}$ for any $u\in\cG$, 
\item {\sl normal} if $\partial^*\circ\kappa=0$, where $\partial^*$ is the Lie algebra homology differential in the chain complex $(C_{\bullet}(\fp_+,\fg),\partial^*)$, where $C_k(\fp_+,\fg):=\Lambda^k\fp_+\otimes\fg \cong_\fp \Lambda^k (\fg/\fp)^* \otimes \fg$. 
\end{itemize}
The homology spaces $H_k(\fp_+,\fg)=\mathrm{ker}(\partial^*)/\mathrm{im}(\partial^*)$ are completely reducible $P$-modules (so $\fp_+$ acts trivially on them), their $\fg_0$-module structure can be algorithmically computed using Kostant's theorem \cite{Kos1961,CS2009}, and this can be explicitly identified with so-called {\sl harmonic elements} of $\Lambda^2\fg_-^* \otimes \fg$.  The projection of the curvature $\kappa$ of a regular normal parabolic geometry to  $\mathrm{ker}(\partial^*)/\mathrm{im}(\partial^*)=H_2(\fp_+,\fg)$ is called the {\sl harmonic curvature} $\kappa_H$.  (By regularity, it is valued in $H_2(\fp_+,\fg)^1$, i.e.\ the positive homogeneity part.)  The harmonic curvature is a simpler object than the full curvature, but it determines the full curvature via a differential operator.  In particular, vanishing of $\kappa_H$ on an open neighbourhood implies vanishing of $\kappa$ there \cite[Theorem 3.1.12]{CS2009}.

%%%%%%%%%%%%%%%%%%%%%%%%%%%%%%%%%%%%%%%%%%%%%%%%%%%%%%%%%%%%  

 \subsection{Categorical equivalences}
 \label{S:cat}

%%%%%%%%%%%%%%%%%%%%%%%%%%%%%%%%%%%%%%%%%%%%%%%%%%%%%%%%%%%% 

 There is a fundamental theorem establishing an equivalence of categories between regular normal parabolic geometries of type $(\fg,P)$ and certain underlying geometric structures, see \cite[Theorem 3.1.14]{CS2009}.  In Table \ref{F:modeldata}, we specify the structure and corresponding model data $(\fg,\fp)$, for which we define $P \subset \Aut(\fg)$ as the subgroup preserving the associated filtration \eqref{E:filt}.  For these three cases, the underlying structure corresponds to the $P$-submodule $\fg^{-1} / \fp \subset \fg / \fp$ via the associated bundle construction $\cG \times_P \fg / \fp$. 
 \begin{table}[h]
 \[
 \begin{array}{|c|c|c|c|c|} \hline
 \mbox{Structure} & \multicolumn{2}{c|}{\mbox{Model data}} & \mbox{Depth} & \mbox{Grading}\\ \hline\hline
 \mbox{$(2,3,5)$} & \fg = \Lie(G_2) & \fp = \Gdd{xw}{} & 3 & \begin{tabular}{cc}
\begin{tikzpicture}[scale=0.8]
%\node[\graycolor] at (-2,2){$P_1$};
\foreach \i in {-3,...,3}
	\draw[\graycolor, xshift=3*\i mm, yshift=4*\i mm]  (150:2) to (150:-2) node[below right]{$\mathfrak{g}_{\i}$};
\foreach \i in {180,240,300}
	\draw[->] (0,0) to (\i:1);
\foreach \i in {210, 270}
	\draw[->] (0,0) to (\i:{sqrt(3)});
\foreach \i in {0,60,120}
	\draw[->, red] (0,0) to (\i:1);
\foreach \i in {-30,30,...,150}
	\draw[->, red] (0,0) to (\i:{sqrt(3)});
\fill[red] (0,0) circle (0.1);
\end{tikzpicture} 
\end{tabular}\\ \hline
 \mbox{XXO} & \wfg = \fsl(4,\bbR) & \wfp =  \Athree{xxw}{} & 2 & \begin{pmatrix} \red{0} & \red{1} & \red{2} & \red{2}\\ -1 & \red{0} & \red{1} & \red{1}\\ -2 & -1 & \red{0} & \red{0}\\ -2 & -1 & \red{0} & \red{0} \end{pmatrix}\\ \hline
 \mbox{4D split-conformal} & \wfg= \fsl(4,\bbR) & \wfq = \Athree{wxw}{} & 1 & \begin{pmatrix} \red{0} & \red{0} & \red{1} & \red{1}\\ \red{0} & \red{0} & \red{1} & \red{1}\\ -1 & -1 & \red{0} & \red{0}\\ -1 & -1 & \red{0} & \red{0} \end{pmatrix}\\ \hline
 \end{array}
 \] 
 \caption{Model data for our structures of interest}
 \label{F:modeldata}
 \end{table}
 
 We remark that the model homogeneous space in the 4D split-conformal case is the Grassmannian of 2-planes in $\bbR^4$, while in the XXO case it is the flag manifold $\bbF_{1,2}(\bbR^4)$ of lines incident on 2-planes.  For parabolic subgroups, we will use notations $P, \widetilde{P}, \widetilde{Q}$ respective to $\fp,\wfp,\wfq$ defined in Table \ref{F:modeldata}.
 
 Recall that $\mathrm{SL}(4,\bbR) \cong \mathrm{Spin}(3,3)$, and we can take $\widetilde{Q}\subset\mathrm{SL}(4,\bbR)$ as the stabilizer of the 2-plane $\left\langle e_1,e_2\right\rangle\subset\bbR^4$, i.e.\
 \begin{align}
 \widetilde{Q}\cong\left\{ \begin{pmatrix}
 A& C\\
 0&B
 \end{pmatrix}:\ A,B\in\mathrm{GL}(2,\bbR),\ \mathrm{det}(A)\mathrm{det}(B)=1 \right\}.
 \end{align}
For this choice of groups $\widetilde{Q}\subset\mathrm{SL}(4,\bbR)$, the general theory implies that there an equivalence of categories between 4D split-conformal {\em spin} structures and regular normal parabolic geometries of type $(\mathfrak{sl}(4,\bbR),\widetilde{Q})$.  In particular, $\pm \id_4$ act trivially on $\mathrm{SL}(4,\bbR) / \widetilde{Q}$.

To see the conformal structure on the quotient $\mathfrak{sl}(4,\bbR)/\widetilde{\mathfrak{q}}$, identify this with the space of  $2\times 2$ matrices $\mathrm{M}_2\bbR$, embedded as the bottom-left $2\times 2$ block in $\mathfrak{sl}(4,\bbR)$. The $\widetilde{Q}$-action (induced by the adjoint action) on this space is given by $X\mapsto BXA^{-1}$ (the $C$-block acts trivially on the quotient). Now the determinant defines a scalar product of signature $(2,2)$ on $\mathrm{M}_2\bbR$, which is conformally preserved by the action, since $\mathrm{det}(BXA^{-1})=(\mathrm{det}(B))^2\mathrm{det}(X)$.

%%%%%%%%%%%%%%%%%%%%%%%%%%%%%%%%%%%%%%%%%%%%%%%%%%%%%%%%%%%%  

 \subsection{Harmonic curvatures}

%%%%%%%%%%%%%%%%%%%%%%%%%%%%%%%%%%%%%%%%%%%%%%%%%%%%%%%%%%%% 

 In Table \ref{F:KH}, we summarize harmonic curvatures (and introduce notation for them).

 \begin{table}[h]
 \[
 \begin{array}{|c|c|c|c|c|} \hline
 \mbox{Structure} & H_2(\fp_+,\fg)^1 & \mbox{Dimension} & \mbox{Homogeneity} & \mbox{Harmonic curvatures} \\ \hline\hline
 (2,3,5) & \Gdd{xw}{-8,4} & 5 &4 &\mbox{Cartan quartic: $\cQ$} \\ \hline
 \multirow{4}{*}{\mbox{XXO}} & \Athree{xxw}{0,-4,4} & 5 & 3 & \cS:\cD\times TN/\cH\to\mathrm{End}(\cD) \\
 & \Athree{xxw}{-4,1,2} & 3 & 2 & \cT:\ell\times TN/\cH\to\cD \\
 & \Athree{xxw}{4,-4,0} & 1 & 1 & \cI: \Lambda^2 \cD \to \ell\\ \hline
 \multirow{2}{*}{\mbox{4D split-conformal} }& \Athree{wxw}{0,-4,4} & 5 & 2 & \mbox{ASD Weyl: $\cW^-$} \\
 & \Athree{wxw}{4,-4,0} & 5 & 2 & \mbox{SD Weyl: $\cW^+$}\\ \hline
 \end{array}
 \]
 \caption{Harmonic curvature for our structures of interest}
 \label{F:KH}
 \end{table}

The component $\cI$ is given by $(x,y)\mapsto\mathrm{pr}_{\ell}([x,y])$, where $\mathrm{pr}_\ell : \cH \to \ell$ is the projection of $\cH = \ell \op \cD$ along $\ell$.  It is precisely the obstruction to integrability of the rank $2$ distribution $\cD$.

 For pairs of 2nd order ODEs, i.e.\ integrable XXO-structures, the components $\cS$ and $\cT$ specialize to the {\sl Fels curvature} and {\sl Fels torsion} respectively \cite{Fels1995}.  In Appendix \ref{S:torsion}, we give a formula for $\cT$ for general XXO-structures -- see \eqref{E:XXOtorsion}.  

 In Section \ref{S:CarTh}, we will need specific information about the harmonic curvature module $H_2(\wfq_+,\wfg)$ in the 4D split-conformal setting that we summarize here.  Recall the model data $(\wfg = \fsl(4,\bbR),\wfq)$ from Section \ref{S:cat}, which induces a decreasing filtration $\wfg^{-1} \supset \wfg^0 = \wfq \supset \wfg^1 = \wfq_+$.  As a module for $\widetilde\fg_0 \cong \bbR \times \fsl(2,\bbR) \times \fsl(2,\bbR)$, $H_2(\wfq_+,\wfg)$ is identified with so-called harmonic elements in $\Lambda^2\wfg_-^* \otimes \wfg$.  By Kostant's theorem \cite{Kos1961}, this module decomposes into two $\wfg_0$-irreps.

% There is a $\wfq$-equivariant isomorphism $(\wfg / \wfq)^* \cong_{\wfq} \wfq_+$ induced by the Killing form on $\wfg$, or alternatively the trace form $T(A,B) = \tr(AB)$.  This induces an isomorphism $\bigwedge^2(\widetilde\fg/\widetilde\fq)^* \otimes \widetilde\fg \cong_{\wfq} \bigwedge^2 \widetilde\fq_+ \otimes \widetilde\fg$, and on the chain space $\bigwedge^\bullet \widetilde\fq_+ \otimes \widetilde\fg$ there is the $\widetilde\fq$-equivariant homology differential $\partial^*$.  We may also identify $\bigwedge^2(\widetilde\fg/\widetilde\fq)^* \otimes \widetilde\fg \cong_{\widetilde\fg_0} \bigwedge^2 \widetilde\fg_-^* \otimes \widetilde\fg$ and on the cochain space $\bigwedge^\bullet \widetilde\fg_-^* \otimes \widetilde\fg$ there is the $\widetilde\fg_0$-equivariant cohomology differential $\partial$.  Letting $\Box = \partial \partial^* + \partial^* \partial$ be the Hodge Laplacian, there is (see for example \cite{CS2009}) a well-known $\wfg_0$-invariant Hodge decomposition of this space into $\im(\partial) \op \ker(\Box) \op \im(\partial^*)$, and an identification of $H_2(\wfq_+,\wfg) \cong \ker(\Box)$ as harmonic 2-cochains.  Kostant's theorem \cite{Kos1961} provides explicit lowest weight vectors $\psi_0,\phi_0$ for these modules,

  Explicitly, let $E_{ij}$ be the $4\times4$ matrix with a 1 in the $(i,j)$-position, and 0 elsewhere.  Also define $H_{ij} := E_{ii} - E_{jj}$.  The trace form $T(A,B) = \tr(AB)$ on $\wfg$ identifies $(\wfg/\wfq)^* \cong_{\wfq} \wfq_+$, in particular
 \begin{align}
 (E_{13}, E_{14}, E_{23}, E_{24}) \leftrightarrow  (E_{31}^*, E_{41}^*, E_{32}^*, E_{42}^*),
 \end{align}
Kostant's theorem yields lowest weight vectors $\psi_0,\phi_0$ for the two $\wfg_0$-irreps, on which the raising operators $E_{12}$ and $E_{34}$ can be applied -- see Table \ref{F:harmonic}.  The second column gives the abstract structure of these modules when viewed as binary quartics for the respective $\fsl(2,\bbR)$-actions.

  \begin{table}[h]
  \begin{footnotesize}
  \[
 \begin{array}{|c|c|l|c}\hline
 \mbox{Module} & \mbox{Label} & \mbox{Harmonic 2-cochain}\\ \hline\hline
 %%%%%%%%%%%
  \multirow{5}{*}{ $\Athree{wxw}{0,-4,4}$ }& 1 \boxtimes x^4 & \psi_4 = E_{41}^* \wedge E_{42}^* \otimes E_{34} \\
 & 1 \boxtimes 4 x^3 y & \psi_3 = (E_{42}^* \wedge E_{31}^* + E_{32}^* \wedge E_{41}^*) \otimes E_{34} + E_{41}^* \wedge E_{42}^* \otimes H_{43} \\
 & 1 \boxtimes 6 x^2 y^2 & \psi_2 = E_{42}^* \wedge E_{41}^* \otimes E_{43} + E_{31}^* \wedge E_{32}^* \otimes E_{34} + (E_{42}^* \wedge E_{31}^* + E_{32}^* \wedge E_{41}^*) \otimes H_{43}
 \\
 & 1 \boxtimes 4x y^3 & \psi_1 = (E_{31}^* \wedge E_{42}^* + E_{41}^* \wedge E_{32}^*) \otimes E_{43} + E_{31}^* \wedge E_{32}^* \otimes H_{43} \\
 & 1 \boxtimes y^4 & \psi_0 = E_{32}^* \wedge E_{31}^* \otimes E_{43}\\ \hline
 %%%%%%%%%%%
  \multirow{5}{*}{ $\Athree{wxw}{4,-4,0}$ }& 
 x^4 \boxtimes 1& \phi_4 = E_{41}^* \wedge E_{31}^* \otimes E_{12} \\
 & 4 x^3 y \boxtimes 1 & \phi_3 = (E_{42}^* \wedge E_{31}^* + E_{41}^* \wedge E_{32}^*) \otimes E_{12} + E_{41}^* \wedge E_{31}^* \otimes H_{21}\\
 & 6 x^2 y^2 \boxtimes 1 & \phi_2 = E_{31}^* \wedge E_{41}^* \otimes E_{21} + E_{42}^* \wedge E_{32}^* \otimes E_{12} + (E_{31}^* \wedge E_{42}^* + E_{32}^* \wedge E_{41}^*)\otimes H_{12}\\
 & 4 x y^3 \boxtimes 1 & \phi_1 = (E_{31}^* \wedge E_{42}^* + E_{32}^* \wedge E_{41}^*) \otimes E_{21} + E_{32}^* \wedge E_{42}^* \otimes H_{12}\\
 & y^4 \boxtimes 1& \phi_0 = E_{32}^* \wedge E_{42}^* \otimes E_{21}\\ \hline
 \end{array}
 \]
 \end{footnotesize}
 \caption{Harmonic 2-cochains associated to the SD \& ASD Weyl curvature modules}
 \label{F:harmonic}
 \end{table}

 \subsection{Twistor XXO-structures as  correspondence spaces }\label{correspondence}

%%%%%%%%%%%%%%%%%%%%%%%%%%%%%%%%%%%%%%%%%%%%%%%%%%%%%%%%%%%% 

 The construction of twistor XXO-structures discussed in Section \ref{sec-construction} can be described as a Cartan-geometric correspondence space construction \cite{Cap2005}. This has the advantage that it allows us to locally characterize these structures in terms of a curvature condition.

 Given  nested parabolic subgroups $P\subset Q\subset G$, the correspondence space construction is a natural construction that assigns to any parabolic geometry  of type $(\fg,Q)$ a parabolic geometry of type   $(\fg,P)$. Starting with  $(\pi:\cG\to M,\omega)$ of type $(\fg,Q)$, one forms the orbit space
 \begin{align}
 \cC M:=\cG/P=\cG\times_Q(Q/P).
 \end{align}
 The natural projection $\cG\to\cC M$ is a $P$-principal bundle and  $(\cG\to \cC M,\omega)$ is  a Cartan geometry of type $(\fg,P)$ (referred to as the {\sl correspondence space}). If $Q/P$ is connected, then this construction defines an equivalence of categories between Cartan geometries of type $(\fg,Q)$ and a subcategory of Cartan geometries of type $(\fg,P)$, see \cite[Prop. 1.5.13]{CS2009}. If $(\cG\to M,\omega)$ is a regular, normal parabolic geometry, then  $(\cG\to \cC M,\omega)$ is automatically normal, but regularity is in general not preserved. 

Given a correspondence space $(\cG\to \cC M,\omega)$, the vertical bundle $V\cC M$ of  $\cC M\to M$ corresponds to $\mathfrak{q}/\fp\subset\fg/\fp$ and the curvature has the property that $\kappa(X,\cdot)=0$ for any $X\in\mathfrak{q}/\fp$.
 It can be shown that this curvature condition locally characterizes correspondence spaces. For regular, normal parabolic geometries, there is an even more efficient characterization in terms of harmonic curvature.  Theorems 2.7 and 3.3 in \cite{Cap2005} show that  a regular, normal parabolic geometry of type $(\fg,P)$ is locally isomorphic to the correspondence space of a normal parabolic geometry of type $(\fg,Q)$ if and only  if its harmonic curvature has the property that $\kappa_H(X,\cdot)=0$ for any $X\in\mathfrak{q}$.

 Here we apply these results to  the nested parabolic subgroups $\widetilde{P}\subset\widetilde{Q}\subset\mathrm{SL}(4,\bbR)$ defined in Section \ref{S:cat}.
The first observation is the following:
\begin{lemma}\label{lemm-cor}
The twistor XXO-structure on the bundle of SD totally null 2-planes over a 4D split-conformal structure can be naturally identified with the XXO-structure on the correspondence space $\cC M=\cG\times_{\widetilde{Q}}\widetilde{Q}/\widetilde{P}$.
\end{lemma}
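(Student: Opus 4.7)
The plan is to identify $\cC M$ with $\bbT^+$ as smooth bundles over $M$, and then to match the distinguished data $(\ell,\cH,\cD)$ on each side.

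First, at the bundle level, $\widetilde{Q}/\widetilde{P}$ is the projective line $\bbP(V)$ of $1$-dimensional subspaces in the $\widetilde{Q}$-stabilized $2$-plane $V\subset\bbR^4$. Under the standard identification $\wfg/\wfq\cong\operatorname{Hom}(V,\bbR^4/V)$ with the determinantal conformal structure of signature $(2,2)$ recalled in Section \ref{S:cat}, the map $L\mapsto\{\phi:\phi|_L=0\}$ sends $\bbP(V)$ bijectively onto one of the two families of totally null $2$-planes in $\wfg/\wfq$; with the appropriate orientation convention, this is the self-dual family. Hence $\cC M=\cG\times_{\widetilde{Q}}\widetilde{Q}/\widetilde{P}$ is canonically identified with $\bbT^+$ as a bundle over $M$.

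Next, the vertical bundle of $\cC M\to M$ is $\cG\times_{\widetilde{P}}\wfq/\wfp$, which under the above identification is the line field $\ell=\ker(\pi_*)$ on $\bbT^+$. The rank-$3$ subbundle $\cH'\subset T\cC M$ determined by the $\widetilde{P}$-filtrand $\wfg^{-1}/\wfp$ contains $\ell$, and its pushforward under $\pi_*$ at a fiber point $(x,\Pi_L)$ is the image of $\wfg^{-1}/\wfq$ in $T_xM$, which coincides with $\Pi_L$. Thus $\cH'=\cH$. Within $\cH'$, the algebraic decomposition $\wfg_{-1}=\langle E_{21}\rangle\oplus\langle E_{32},E_{42}\rangle$ is $\wfg_0$-invariant and, since $\wfp_+$ acts trivially on $\wfg^{-1}/\wfp$, also $\widetilde{P}$-invariant. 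It therefore globalizes to a canonical splitting $\cH'=\ell\oplus\cD'$, defining a rank-$2$ subbundle $\cD'\subset\cH'$.

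Finally, I would identify $\cD'$ with the twistor distribution $\cD$ by verifying \eqref{E:D-def} and invoking its uniqueness. At the Lie algebra level $[\langle E_{32},E_{42}\rangle,\langle E_{32},E_{42}\rangle]=0$, so the structure equation for the Cartan connection $\omega$ gives $[\cD',\cD']\subset\cH'$ modulo a possible curvature contribution in the $\langle E_{31},E_{41}\rangle$-direction. Inspection of Table \ref{F:harmonic} shows that the harmonic curvature modules take values only in the block-diagonal $\wfg_0$ for the $\widetilde{Q}$-grading, so no cochain of the form $E_{32}^*\wedge E_{42}^*\otimes\langle E_{31},E_{41}\rangle$ appears in $\kappa_H$; combined with normality ($\partial^*\kappa=0$) and the correspondence-space identity $\kappa(X,\cdot)=0$ for $X\in\wfq/\wfp$, the vanishing propagates to the full $\kappa$. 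The uniqueness of $\cD$ as the rank-$2$ subdistribution of $\cH$ satisfying $[\cD,\cD]\subset\cH$, established in Section \ref{sec-construction}, then forces $\cD'=\cD$.

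The main obstacle I expect is controlling the non-harmonic components of $\kappa$ in the last step. While no harmonic cochain can contribute the dangerous $\langle E_{31},E_{41}\rangle$-term when paired with $(E_{32},E_{42})$, extending this to the full normal curvature on the correspondence space requires either a detailed algebraic argument using $\partial^*\kappa=0$ together with the general correspondence-space theory of \cite{Cap2005}, or a self-contained direct computation: starting from any null coframing \eqref{null-cof}, express the normal conformal Cartan connection in terms of the Levi-Civita data, pull it back to $\cC M$, and verify that the annihilator of the algebraically-defined $\cD'$ agrees with $\langle\omega^1,\omega^2,\omega^3\rangle$ as given in \eqref{om12}--\eqref{om3}.
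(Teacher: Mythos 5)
Your proposal follows essentially the same route as the paper: identify $\cC M$ with $\bbT^+$ via the stabilizer computation (the maps vanishing on $\langle e_1\rangle$ form an SD totally null $2$-plane whose stabilizer in $\widetilde{Q}$ is $\widetilde{P}$), match $\ell$ with the vertical bundle and $\cH$ with the filtrand $\wfg^{-1}/\wfp$, and pin down $\cD$ inside $\cH$ by the bracket condition together with its uniqueness. The one step you flag as an obstacle -- controlling the possible $\langle E_{31},E_{41}\rangle$-valued curvature contribution -- is closed in the paper simply by the standard fact that the normal conformal Cartan geometry is torsion-free, so $\kappa$ is $\wfq$-valued, the correspondence geometry of type $(\wfg,\widetilde{P})$ is regular, and $[\cD',\cD']\subset\cH$ follows directly, with no need for your harmonic-to-full propagation argument or the fallback coordinate computation.
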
 

\begin{proof}
Let $(\cG\to M,\omega)$ be a normal parabolic geometry of type $(\mathrm{SL}(4,\bbR),\widetilde{Q})$ with associated oriented split-conformal structure $(M,[\sfg])$. Recall the identification of  $\mathfrak{sl}(4,\bbR)/\widetilde{\mathfrak{q}}$ with the space of  $2\times 2$ matrices $\mathrm{M}_2\bbR$, embedded as the bottom-left $2\times 2$ block in $\mathfrak{sl}(4,\bbR)$. The $\widetilde{Q}$-action on this space, as discussed in  Section \ref{S:cat}, defines a surjection from $\widetilde{Q}$ onto the connected component of the identity of the group $\mathrm{CSO}_0(2,2)$ of linear conformal transformations of $\mathfrak{sl}(4,\bbR)/\widetilde{\mathfrak{q}}$. This action preserves the two distinguished sets of SD/ASD  totally null $2$-planes in $\mathfrak{sl}(4,\bbR)/\widetilde{\mathfrak{q}}$ and is transitive on them.
 It is easy to see that the set of maps in $\mathrm{M}_2\bbR$ that vanish on the line $\left\langle e_1\right\rangle$ through the first basis vector in the standard representation of $\mathfrak{sl}(4,\bbR)$ defines such a totally null $2$-plane and that %selfdual for the proper choice of orientation and
%  in the standard representation of $\mathrm{SL}(4,\bbR)$ 
   its stabilizer  in $\widetilde{Q}$ is exactly the subgroup $\widetilde{P}\subset \widetilde{Q}$.
    %of $\left\langle e_1\right\rangle$.}
   
   This provides the identification $\Phi:\cC M=\cG\times_{\widetilde{Q}}\widetilde{Q}/\widetilde{P}\cong\bbT^+(M)$. Since the normal conformal geometry $(\cG\to M,\omega)$ is torsion-free, the geometry $(\cG\to \cC M,\omega)$ is regular and thus has an induced XXO-structure. Moreover, via the Cartan connection, $T\cC M\cong \cG\times_{\widetilde{P}}(\wfg/\widetilde{\fp})$ and the map $\pi_*:T\cC M\to TM$ corresponds to the projection $\wfg/\widetilde{\fp}\to\wfg/\widetilde{\mathfrak{q}}$. Using this, it is straightforward to see that $\Phi$ is indeed an isomorphism of XXO-structures.
\end{proof}

The general results on correspondence spaces then immediately lead to the following:

 \begin{prop} \label{P:T}
 An XXO-structure is locally isomorphic to the twistor XXO-structure on the bundle of SD totally null 2-planes over a 4D split-conformal structure  if and only if it has vanishing harmonic curvature component $\mathcal{T}$.
 \end{prop}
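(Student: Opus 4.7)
The plan is to combine Lemma \ref{lemm-cor} with the general correspondence space characterization of \cite{Cap2005}, thereby reducing the geometric statement to an algebraic condition on the harmonic curvature that can be read off directly from Table \ref{F:KH}.

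First, by Lemma \ref{lemm-cor}, twistor XXO-structures are precisely the correspondence spaces $\cC M = \cG\times_{\widetilde{Q}}\widetilde{Q}/\widetilde{P}$ of normal parabolic geometries of type $(\wfg,\widetilde{Q})$, i.e.\ of 4D split-conformal structures. Čap's criterion (Theorems 2.7 and 3.3 of \cite{Cap2005}) then says that a regular normal parabolic geometry of type $(\wfg,\widetilde{P})$ is locally isomorphic to such a correspondence space if and only if its harmonic curvature satisfies $\kappa_H(X,\cdot)=0$ for every $X\in\wfq$. Since $\kappa_H$ is horizontal and $\widetilde{P}$-equivariant as a function $\cG\to\Lambda^2(\wfg/\wfp)^*\otimes\wfg$, this amounts to the condition $\kappa_H(X,\cdot)=0$ for $X\in\wfq/\wfp$, a 1-dimensional subspace of $\wfg/\wfp$. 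The identification made in the proof of Lemma \ref{lemm-cor} shows that $\wfq/\wfp$ is precisely the line field $\ell$ tangent to the fibres of $\pi:\bbT^+\to M$; explicitly in the basis of Section \ref{S:cat}, it is spanned modulo $\wfp$ by $E_{21}\in\wfg_{-1}^P$.

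It then remains to translate $\kappa_H(\ell,\cdot)=0$ into a condition on the three harmonic curvature components $\cS$, $\cT$, $\cI$ of Table \ref{F:KH}. Inspecting their supports in $\Lambda^2\wfg_-^*\otimes\wfg$: the component $\cI$ is supported on $\Lambda^2\cD$, the component $\cS$ has first slot in $\cD$, while $\cT$ has first slot in $\ell$. Since $\ell$ and $\cD$ are complementary within $\cH \cong\wfg_{-1}^P$, insertion of an element of $\ell$ into $\cI$ or $\cS$ automatically yields zero, and $\kappa_H(\ell,\cdot)=0$ reduces to the single requirement $\cT=0$.

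The main subtlety is pinning down the supports of the harmonic components precisely as stated in Table \ref{F:KH}, which relies on the $\wfg_0$-module structure of $H_2(\wfp_+,\wfg)^1$ given by Kostant's theorem; once this is in hand the argument is immediate. The regularity of the correspondence space that is required in order to invoke the equivalence of categories for XXO-structures follows from Lemma \ref{lemm-cor}, where it is obtained from the torsion-freeness of the normal conformal Cartan geometry.
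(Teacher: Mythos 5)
Your proposal is correct and follows essentially the same route as the paper: Lemma \ref{lemm-cor} identifies $\ell$ with the vertical bundle of $\cC M\to M$ (equivalently $\wfq/\wfp$), Table \ref{F:KH} shows that insertion of $\ell$ annihilates $\cS$ and $\cI$ automatically so that $\kappa_H(\ell,\cdot)=0$ is equivalent to $\cT\equiv 0$, and Theorems 2.7 and 3.3 of \cite{Cap2005} then give the local characterization. The only difference is that you spell out the module-support bookkeeping that the paper leaves implicit.
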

 
 \begin{proof}
 By Lemma \ref{lemm-cor}, the line bundle $\ell$ of the XXO-structure corresponds to the vertical bundle $V\cC M$ of  $\cC M\to M$.  From Table \ref{F:KH}, we see that the harmonic curvature of an XXO-structure vanishes upon insertion of any element of  $\ell$ if and only if $\cT \equiv 0$.  Thus the result is an immediate consequence of  Theorems 2.7 and 3.3 in \cite{Cap2005}.
\end{proof}

\begin{remark}
Having identified the twistor XXO-structure as the underlying structure of a correspondence space, this also allows us to relate the components of the harmonic curvatures $\kappa_H^{\cC M}$ and $\kappa_H^M$ (see Table \ref{F:KH}) via
 \begin{align}
 \kappa_H^{\cC M}=\mathrm{pr}\circ\kappa_H^M: \cG\to H_2(\wfq_+,\wfg)/((\wfp_+\cap \wfq_0)\cdot H_2(\wfq_+,\wfg))  \,\subset\, H_2(\wfp_+,\wfg).
 \end{align}
 In this way, we recover the relationship between the Weyl tensor $\cW^+$ and the harmonic curvature component $\cI$  via parabolic geometry machinery, which was obtained by direct computation in Lemma \ref{lemm-Weyl}. Similarly, $\cW^-$ can be related to $\cS$.
\end{remark} 

 %%%%%%%%%%%%%%%%%%%%%%%%%%%%%%%%%%%%%%%%%%%%%%%%%%%%%%%%%%%%  

 \subsection{Symmetries and gaps} 
 
 %%%%%%%%%%%%%%%%%%%%%%%%%%%%%%%%%%%%%%%%%%%%%%%%%%%%%%%%%%%% 

 The (infinitesimal) symmetry algebra of each of the three structures of interest is formulated naturally via the Lie derivative, e.g.\ for an XXO-structure $(\ell,\cD)$ on a 5-manifold $N$, a symmetry is a vector field $\bX \in \fX(N)$ such that $\cL_\bX \ell \subset \ell$ and $\cL_\bX \cD \subset \cD$.  By the aforementioned equivalence of categories statements, we may equivalently study the symmetry of the corresponding Cartan geometry.
 
 Given any Cartan geometry $(\cG \to M, \omega)$ of type $(\fg,P)$, its symmetry algebra is 
 \begin{align}
 \mathfrak{inf}(\cG,\omega) := \{ \xi \in \fX(\cG)^P : \cL_\xi \omega = 0 \}.
 \end{align}
 Since $\omega$ is in particular a coframing, then $\dim(\mathfrak{inf}(\cG,\omega)) \leq \dim(\cG) = \dim(\fg)$, with equality realized on the homogeneous model $(G \to G/P, \omega_G)$.  For parabolic geometries, equality is {\em locally uniquely} realized by this model.  The {\sl submaximal} symmetry dimension $\fS$ is the next realizable symmetry dimension below $\dim(\fg)$, among all regular normal parabolic geometries of type $(\fg,P)$, and often there is a significant symmetry gap \cite{KT2014}.
 
 Cartan \cite{Car1910} showed that $\fS = 7$ for $(2,3,5)$-distributions.  For 4D split-conformal structures and XXO-structures, we have $\fS = 9$.  (See \cite[Table 12]{KT2014} for $G = A_3$ and $P = P_1$ or $P_{1,2}$ respectively.)  In particular, we have the following:
 
 \begin{prop}
 The symmetry algebra of a non-integrable XXO-structure has dimension $\leq 9$.
 \end{prop}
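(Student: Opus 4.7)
The proposition is a direct corollary of the Kruglikov--The symmetry gap theorem cited just above. By the equivalence of categories from Section \ref{S:cat}, a non-integrable XXO-structure underlies a regular normal parabolic geometry of type $(\wfg,\widetilde{P})$, and (by the remark before the proposition) its symmetry algebra coincides with that of the associated Cartan geometry. Kruglikov--The \cite[Table 12]{KT2014} records the dichotomy that any such regular normal parabolic geometry is either locally isomorphic to the flat model (with symmetry algebra of dimension $\dim(\wfg) = 15$) or has symmetry algebra of dimension at most $\fS = 9$. The plan is therefore to rule out the first alternative under the non-integrability hypothesis.

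For this step, I would use the description of $\cI$ following Table \ref{F:KH}: the harmonic component $\cI:\Lambda^2\cD \to \ell$, $(x,y)\mapsto\mathrm{pr}_\ell([x,y])$, is exactly the Frobenius obstruction of $\cD$. Non-integrability of $\cD$ thus forces $\cI \neq 0$ at some point, and hence on some nonempty open set $U$, so $\kappa_H \not\equiv 0$ on $U$. By \cite[Theorem 3.1.12]{CS2009}, vanishing of $\kappa_H$ on an open neighborhood forces vanishing of the full curvature $\kappa$ on that neighborhood; taking the contrapositive, $\kappa$ cannot vanish identically on any open subset of $U$. The geometry is therefore not locally flat, so the Kruglikov--The gap applies and forces $\dim(\mathfrak{inf}) \leq 9$.

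The only substantive point requiring care is that the submaximal gap is a local statement applied to the Cartan geometry on $\cG$, whereas non-integrability is a property of $\cD$ at a single point of $N$. This is handled by the standard observation that the set where $\cI \neq 0$ is open and nonempty; restricting to this open set produces a regular normal parabolic geometry whose curvature is nowhere locally flat, and the symmetry algebra of the original structure injects into the symmetry algebra of this restriction. Once this bookkeeping is in place, no further calculation is needed.
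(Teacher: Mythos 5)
Your proposal is correct and takes essentially the same route as the paper, which obtains the bound directly from the submaximal symmetry dimension $\fS=9$ for XXO-structures in \cite[Table 12]{KT2014}; your only added step---that non-integrability forces $\cI \neq 0$, hence $\kappa_H \not\equiv 0$, so the geometry is not locally flat and the gap theorem applies---is exactly the implicit content of the paper's argument. One small remark: the appeal to \cite[Theorem 3.1.12]{CS2009} is unnecessary and is invoked in the wrong logical direction; since $\kappa_H$ is pointwise a projection of $\kappa$, the implication $\kappa_H \neq 0 \Rightarrow \kappa \neq 0$ is immediate, which is all that is needed to rule out the flat alternative.
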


 \begin{remark}
Recall that symmetries of $[\sfg]$ lift to symmetries of the twistor distribution $\cD$.
Together with the fact that the submaximal symmetry dimension of a $(2,3,5)$ distribution is $7$, this immediately implies that the twistor distribution $\cD$ of a conformal structure $[\sfg]$ whose conformal symmetry algebra is at least 8-dimensional is  necessarily $\mathrm{G}_2$-symmetric.
\end{remark}
%%%%%%%%%%%%%%%%%%%%%%%%%%%%%%%%%%%%%%%%%%%%%%%%%%%%%%%%%%%%  
 
 \section{Abstract classification}

%%%%%%%%%%%%%%%%%%%%%%%%%%%%%%%%%%%%%%%%%%%%%%%%%%%%%%%%%%%% 
% We now turn to our main task: the classification of 
 
 \subsection{Classification problem for Lie-theoretic models}  
 \label{S:LieTh}
 %%%%%%%%%%%%%%%%%%%%%%%%%%%%%%%%%%%%%%%%%%%%%%%%%%%%%%%%%%%% 
 
% A locally homogeneous XXO-structure is completely encoded by a Lie-theoretic model $(\ff,\ff^0;\ell,\cD)$: namely, its symmetry algebra $\ff$ and isotropy subalgebra $\ff^0 \subset \ff$ at a generic basepoint $o \in N$, with $\ff/\ff^0$ admitting a (depth 2) {\sl XXO-filtration}:
% \begin{align}
% T_o N \cong \ff/\ff^0 = (\ff/\ff^0)^{-2} \supset (\ff/\ff^0)^{-1} = \ell \op \cD =: \cH,
% \end{align}
% with $\ell$ and $\cD$ each being $\ff^0$-invariant.  (For brevity, we have abused notation and not written a subscript on $\ell, \cD, \cH$ to refer to their values at $o$.)

 A locally homogeneous XXO-structure is completely encoded by a {\sl Lie-theoretic} model $(\ff,\ff^0;\ff_\ell,\ff_\cD)$: namely, its symmetry algebra $\ff$ and isotropy subalgebra $\ff^0 \subset \ff$ at a generic basepoint $o \in N$, and $\ff^0$-invariant subspaces $\ff_\ell,\ff_\cD\subset \ff/\ff^0$ corresponding to $\ell,\mathcal{D}\subset TN$ at $o$.
 %, such that $\ff/\ff^0$ admits a (depth 2) {\sl XXO-filtration}:
 %\begin{align}
 %T_o N \cong \ff/\ff^0 = (\ff/\ff^0)^{-2} \supset (\ff/\ff^0)^{-1} = \ell \op \cD =: \cH.
 %\end{align}
% with $\ell$ and $\cD$ each being $\ff^0$-invariant.  
 %(For brevity, we have abused notation and not written a subscript on $\ell, \cD, \cH$ to refer to their values at $o$.) 

 Now assume that $\cD$ is $(2,3,5)$ and has vanishing Cartan tensor $\cQ \equiv 0$, so that $\cD$ has $\fg = \Lie(G_2)$ symmetry.  Let $(\cG \stackrel{\pi}{\to} N, \omega)$ be the parabolic geometry of type $(\fg = \Lie(G_2),P)$ associated to $\cD$.  Since $\cQ \equiv 0$, then any $u \in\pi^{-1}(o) \subset \cG$, $\omega_u$ restricts to a Lie algebra injection $\ff \inj \fg$.  From Section \ref{S:cat}, recall that $\fg$ admits the (depth 3) {\sl $(2,3,5)$-filtration} $\fg = \fg^{-3} \supset ... \supset \fg^3$ with stabilizer $P \subset \Aut(\fg)$, and so $\ff$ inherits a filtration of the form
 \begin{align}
 \ff = \ff^{-3} \supset \ff^{-2} \supset \ff^{-1} \supset \ff^0 \supset \ff^1 \supset \ff^2 \supset \ff^3.
 \end{align}
 The $\ff^0$ here agrees with that above, being the isotropy subalgebra at $o$.  Moreover, 
 \begin{align}
 \ff_\cD=\ff^{-1} / \ff^0 \cong\cD_o, \quad\mbox{and}\quad  \ff_{\ell} \op \ff_{\cD}=\ff^{-2} / \ff^0  \cong [\cD,\cD]_o.
 \end{align}
 {\em Henceforth, we will abuse notation and write simply $(\ell,\cD)$ in the abstract setting in place of $(\ff_\ell,\ff_\cD)$.}
 
 Define the associated-graded Lie algebra $\fs = \tgr(\ff)$, i.e.\ $\fs_i = \ff^i / \ff^{i+1}$.  Since the filtration on $\fg$ comes from a grading $\fg = \fg_{-3} \op ... \op \fg_3$, then we can identify $\fs$ with a graded Lie subalgebra of $\fg$.  Transitivity of $\ff$ implies $\tgr_-(\ff) = \fg_-$, and if we assume that $\ff$ is multiply-transitive, then $\dim(\ff^0) \geq 1$.  The condition that the harmonic curvature component $\cT$ vanish can be understood as a condition on the data $(\ff, \ff^0; \ell, \cD)$.   (See for instance Example \ref{X:M6N-torsion} in Appendix \ref{S:torsion}.)

 \begin{defn} We say that the data $(\ff, \ff^0; \ell, \cD)$ is an {\sl admissible} (Lie-theoretic) model if:
 \begin{enumerate}
 \item[(X.1)] $\ff \inj \fg$ is a filtered Lie subalgebra with $\fg$ equipped with the $(2,3,5)$-filtration and $\ff$ equipped with the induced filtration such that $\ff^i=\fg^i\cap\ff$;
 \item[(X.2)] $\tgr_-(\ff) = \fg_-$ and $\dim(\ff^0) \geq 1$;
 \item[(X.3)] $ \cD = \ff^{-1} / \ff^0$, and $\ell \subset \ff^{-2} / \ff^0$ is an $\ff^0$-invariant line such that $\ell \cap \cD = 0$;
 \item[(X.4)]  $\cT \equiv 0$;
 \item[(X.5)]  it is a {\em maximal element} (among the set of those satisfying (X.1)-(X.4)) with respect to the natural partial ordering.
 \end{enumerate}
 \end{defn}
 
 For (X.5), the partial ordering is given by declaring $(\ff, \ff^0; \ell, \cD) \leq (\widetilde\ff, \widetilde\ff^0; \widetilde\ell, \widetilde\cD)$ if there is an {\sl embedding} of filtered Lie algebras $\ff \inj \widetilde\ff$ inducing an isomorphism $\ff^{-2} / \ff^0\cong\widetilde{\ff}^{-2} / \widetilde{\ff}^0$ mapping $\ell$ to $\widetilde\ell$. The parabolic subgroup $P$ naturally acts on admissible $(\ff,\ff^0;\ell,\cD)$ via the adjoint action, e.g.\ $\ff \mapsto \Ad_p \ff$, $\forall p \in P$, so we view $P$ as the (initial) {\sl structure group} for our problem:

 \begin{framed}
 {\bf Classify admissible $(\ff,\ff^0;\ell, \cD)$ up to the $P$-action.}
 \end{framed}
 
 Here, $P$ will be used to bring $\ff$ into specific (canonical) forms.  In doing so, it will be successively reduced, so we will refer to these (reduced) subgroups as {\sl residual structure groups}.   We say that the filtered Lie subalgebra $\ff \subset \fg$ is a {\sl filtered deformation} of $\fs = \tgr(\ff) \subset \fg$.  It will be convenient to use the following notation.  Suppose that $x \in \fg^i \backslash \fg^{i+1}$, so $x = x_i + x_{i+1} + ...$ with $x_i \neq 0$ and $x_j \in \fg_j$ for all $j \geq i$.  For $x$, we write $\tgr_i(x) = x_i$ for its {\sl leading part}, while the terms of higher homogeneity $x_{i+1} + ...$ will be referred to as its {\sl tails}.
 
 \subsection{The simple Lie algebra $\fg = \Lie(G_2)$}
 \label{S:G2}

%%%%%%%%%%%%%%%%%%%%%%%%%%%%%%%%%%%%%%%%%%%%%%%%%%%%%%%%%%%% 

Let us introduce a convenient basis of $\fg = \Lie(G_2)$ (adapted to the root space decomposition), pictured on the $G_2$ root diagram in Figure \ref{F:G2}.

\begin{center} 
\begin{figure}[h]
 \begin{tikzpicture}[scale=1]
%\node[\graycolor] at (-2,2){$P_1$};
\foreach \i in {-3,...,3}
	\draw[\graycolor, xshift=3*\i mm, yshift=4*\i mm]  (150:2) to (150:-2.5) node[below right]{$\mathfrak{g}_{\i}$};
\foreach \i in {180,240,300}
	\draw[->] (0,0) to (\i:1);
\foreach \i in {210, 270}
	\draw[->] (0,0) to (\i:{sqrt(3)});
\foreach \i in {0,60,120}
	\draw[->, red] (0,0) to (\i:1);
\foreach \i in {-30,30,...,150}
	\draw[->, red] (0,0) to (\i:{sqrt(3)});
\fill[red] (0,0) circle (0.1);
   \node at (-0.3,0.4) {$\sfZ_1,$};
    \node at (0.3,0.4) {$\sfZ_2$};
    \node at (1.3,0) {$e_{10}$};
    \node at (-1.3,0) {$f_{10}$};
    \node at (-1.8,1.155) {$e_{01}$};
    \node at (1.8,-1.155) {$f_{01}$};
    \node at (-1.8,-1.155) {$f_{31}$};
    \node at (1.8,1.155) {$e_{31}$};
    \node at (0,-2.1) {$f_{32}$};
    \node at (0,2.1) {$e_{32}$};
    \node at (0.666,1.155) {$e_{21}$};
    \node at (-0.666,-1.155) {$f_{21}$};
    \node at (-0.666,1.155) {$e_{11}$};
    \node at (0.666,-1.155) {$f_{11}$};

 \end{tikzpicture} 
 \caption{A basis of $\fg = \Lie(G_2)$ adapted to the root space decomposition}
 \label{F:G2}
 \end{figure}
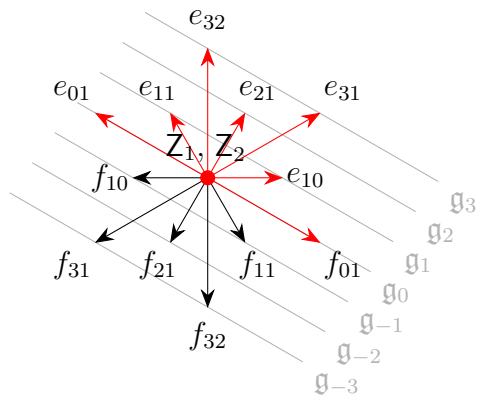
 \end{center}
 
 The generators $\sfZ_i, f_{ij}, e_{ij}$ in Figure \ref{F:G2} are obtained by differentiation with respect to $z_i, a_{ij}, b_{ij}$ of the following $7 \times 7$ matrix realization of $\fg$, as presented in \cite{The2022}.
 \begin{align} \label{E:g2rep}
 \begin{footnotesize}
 \begin{array}{c}
 \begin{pmatrix}
 2z_1 + z_2 & b_{10} & b_{11} & \sqrt{2} \,b_{21} & b_{31} & b_{32} & 0\\
 a_{10} & z_1 + z_2 & b_{01} & \sqrt{2}\, b_{11} & -b_{21} & 0 & -b_{32}\\
 a_{11} & a_{01} & z_1 & -\sqrt{2}\, b_{10} & 0 & b_{21} & -b_{31}\\
 \sqrt{2} \, a_{21} & \sqrt{2}\, a_{11} & -\sqrt{2}\, a_{10} & 0 & \sqrt{2}\, b_{10} & -\sqrt{2}\, b_{11} & -\sqrt{2}\,b_{21}\\
 a_{31} & -a_{21} & 0 & \sqrt{2}\, a_{10} & -z_1 & -b_{01} & -b_{11}\\
 a_{32} & 0 & a_{21} & -\sqrt{2}\, a_{11} & -a_{01} & -z_1 - z_2 & -b_{10}\\
 0 & -a_{32} & -a_{31} & -\sqrt{2}\, a_{21} & -a_{11} & -a_{10} & -2z_1 - z_2
 \end{pmatrix}
 \end{array}.
 \end{footnotesize}
 \end{align}
For $i,j \geq 0$, we have root vectors $e_{ij}$ ({\sl raising operators}) and $f_{ij}$ ({\sl lowering operators}) lying in the root spaces for $i \alpha_1 + j\alpha_2$ and $-i\alpha_1 - j\alpha_2$ respectively (where $\alpha_1$ and $\alpha_2$ are the simple roots).   The {\sl grading element} $\sfZ_1$ gives rise to the grading of $\fg$ indicated in Figure \ref{F:G2}.  Namely, $\fg_i$ is the $\ad_{\sfZ_1}$-eigenspace with eigenvalue $i$, and $[\sfZ_1,e_{ij}] = i e_{ij}$ while $[\sfZ_1,f_{ij}] = -i f_{ij}$.  The full commutator table for $\fg$ is given in Table \ref{F:G2br}.
 
 \begin{footnotesize}
 \begin{table}[h]
 \[
 \begin{array}{c|cccccccccccccc}
 [\cdot,\cdot] & \sfZ_1 & \sfZ_2 & e_{01} & e_{10} & e_{11} & e_{21} & e_{31} & e_{32} & f_{01} & f_{10} & f_{11} & f_{21} & f_{31} & f_{32} \\ \hline
 \sfZ_1 & \cdot & \cdot & \cdot & e_{10} & e_{11} & 2 e_{21} & 3 e_{31} & 3 e_{32} & \cdot & -f_{10} & -f_{11} & -2 f_{21} & -3 f_{31} & -3 f_{32} \\
 \sfZ_2 & & \cdot & e_{01} & \cdot & e_{11} & e_{21} & e_{31} & 2 e_{32} & -f_{01} & \cdot & -f_{11} & -f_{21} & -f_{31} & -2 f_{32}\\
 e_{01} &&& \cdot & -e_{11} & \cdot & \cdot & e_{32} & \cdot & -\sfZ_1 + 2\sfZ_2 & \cdot & f_{10} & \cdot & \cdot & -f_{31}\\
 e_{10} &&&&\cdot & 2 e_{21} & -3 e_{31} & \cdot & \cdot & \cdot & 2\sfZ_1 - 3\sfZ_2 & -3 f_{01} & -2 f_{11} & f_{21} & \cdot\\
 e_{11} &&&&&\cdot & 3 e_{32} & \cdot & \cdot & e_{10} & -3 e_{01} & -\sfZ_1 + 3\sfZ_2 & 2 f_{10} & \cdot & -f_{21}\\
 e_{21} &&&&&&\cdot & \cdot & \cdot & \cdot & -2 e_{11} & 2 e_{10} & \sfZ_1 & -f_{10} & f_{11}\\
 e_{31} &&&&&&&\cdot & \cdot & \cdot & e_{21} & \cdot & -e_{10} & \sfZ_1 - \sfZ_2 & f_{01}\\
 e_{32} &&&&&&&&\cdot & -e_{31} & \cdot & -e_{21} & e_{11} & e_{01} & \sfZ_2\\
 f_{01} &&&&&&&&&\cdot & f_{11} & \cdot & \cdot & -f_{32} & \cdot\\
 f_{10} &&&&&&&&&&\cdot & -2 f_{21} & 3 f_{31} & \cdot & \cdot\\
 f_{11} &&&&&&&&&&&\cdot & -3f_{32} & \cdot & \cdot\\
 f_{21} &&&&&&&&&&&&\cdot & \cdot & \cdot\\
 f_{31} &&&&&&&&&&&&&\cdot & \cdot\\
 f_{32} &&&&&&&&&&&&&&\cdot \\
 \end{array}
 \]
 \caption{Bracket relations for $\fg = \Lie(G_2)$}
 \label{F:G2br}
 \end{table}
 \end{footnotesize}

 %%%%%%%%%%%%%%%%%%%%%%%%%%%%%%%%%%%%%%%%%%%%%%%%%%%%%%%%%%%%

 \subsection{Isotropy constraints} 

 %%%%%%%%%%%%%%%%%%%%%%%%%%%%%%%%%%%%%%%%%%%%%%%%%%%%%%%%%%%% 
   
 \begin{lemma} Let $(\ff,\ff^0;\ell,\cD)$ be an admissible model.  Then $\ff^1 = 0$, $\dim(\ff^0) \leq 4$, and $\dim(\ff)\leq 9$.
 \end{lemma}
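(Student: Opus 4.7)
The plan is to show $\ff^1 = 0$, from which the other two bounds follow immediately: by transitivity $\tgr_-(\ff) = \fg_-$ from (X.2), the associated graded $\fs := \tgr(\ff) \subseteq \fg$ satisfies $\dim(\ff) = 5 + \dim(\fs_0) + \dim(\ff^1)$ with $\fs_0 := \tgr_0(\ff) \subseteq \fg_0$ and $\dim(\fg_0) = 4$. Since $\ff^1 = 0$ is equivalent to $\fs_1 = \fs_2 = \fs_3 = 0$, I would handle these in order: first exploit the $\ff^0$-invariance of $\ell$ to eliminate $\fs_1$, then bootstrap to $\fs_2$ and $\fs_3$ via closure of $\ff$ under the Lie bracket with lifts of $\fg_{-1}$.

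The heart of the argument is $\fs_1 = 0$. Pick a representative $\tilde\ell \in \ff^{-2}$ of $\ell$ normalized so that $\tgr_{-2}(\tilde\ell) = f_{21}$, which is possible because $\ell \cap \cD = 0$ and $\fg_{-2} = \bbR f_{21}$. For any $\hat x \in \ff^1$ with leading part $x = c_1 e_{10} + c_2 e_{11} \in \fg_1$, the bracket $[\hat x, \tilde\ell]$ lies in $[\ff^1, \ff^{-2}] \subseteq \ff^{-1}$, so its class in $\ff^{-2}/\ff^0$ belongs to $\cD = \ff^{-1}/\ff^0$. On the other hand, the $\ff^0$-invariance of $\ell$ places this class in $\ell$. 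The hypothesis $\ell \cap \cD = 0$ then upgrades this to $[\hat x, \tilde\ell] \in \ff^0$. Since $\ff^0 \subseteq \fg_{\geq 0}$, the $\fg_{-1}$-component of $[\hat x, \tilde\ell]$ must vanish. But this component equals $[x, f_{21}] = -2 c_1 f_{11} + 2 c_2 f_{10}$ by Table \ref{F:G2br}, showing that $\ad(f_{21}) : \fg_1 \to \fg_{-1}$ is a linear isomorphism; hence $c_1 = c_2 = 0$ and $x = 0$.

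The remaining two cases are then routine. If $\hat y \in \ff^2$ has leading part $c\, e_{21} \in \fg_2$, bracketing with a lift of $f_{10} \in \fg_{-1}$ produces an element of $\ff^1$ whose leading part is $[f_{10}, e_{21}] \cdot c = 2c\, e_{11}$, which must vanish in $\fs_1 = 0$, so $c = 0$. Similarly, for $\hat z \in \ff^3$ with leading part $c_1 e_{31} + c_2 e_{32}$, bracketing with lifts of $f_{10}$ and $f_{11}$ yields elements of $\ff^2$ with leading parts $-c_1 e_{21}$ and $c_2 e_{21}$, both of which must vanish in $\fs_2 = 0$, forcing $c_1 = c_2 = 0$. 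This completes the proof that $\ff^1 = 0$, after which $\dim(\ff^0) = \dim(\fs_0) \leq 4$ and $\dim(\ff) \leq 9$ are immediate.

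The only non-routine step is the first: the key insight is that joint use of the $\ff^0$-invariance of $\ell$ and the transversality $\ell \cap \cD = 0$ automatically forces $[\hat x, \tilde\ell] \in \ff^0$, at which point the non-degeneracy of $\ad(f_{21}): \fg_1 \to \fg_{-1}$ in the $(2,3,5)$-graded Lie algebra does the rest. The subsequent cascade to $\fs_2$ and $\fs_3$ is standard $\fg$-module bookkeeping using the commutator table, and does not draw on the additional structure of $\ell$ at all.
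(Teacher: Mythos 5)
Your proof is correct and takes essentially the same route as the paper: the key step (invariance of $\ell$ together with $\ell \cap \cD = 0$ forces $[\hat x,\tilde\ell] \in \ff^0$, and then $[c_1 e_{10}+c_2 e_{11}, f_{21}] = -2c_1 f_{11} + 2c_2 f_{10}$ kills $\fs_1$) is exactly the paper's argument, with the terse invariance step spelled out. The only cosmetic difference is that the paper deduces $\fs_2 = \fs_3 = 0$ by citing the $\fg_0$-module isomorphisms $\fg_2\times\fg_{-1}\to\fg_1$ and $\fg_3\times\fg_{-2}\to\fg_1$, whereas you verify the same cascade by explicit brackets with lifts of $f_{10},f_{11}$.
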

 
 \begin{proof} Let $X \in \ff^1 \subseteq \fg^1 = \langle e_{10}, e_{11}, e_{21}, e_{31}, e_{32} \rangle$, so $X_1 = \tgr_1(X) = a e_{10} + b e_{11}$.  Let $L \in \ell$ with $f_{21} = \tgr_{-2}(L)$. But then 
 \begin{align}
 \tgr([X,L]) = [\tgr_1(X),\tgr_{-2}(L)] = [X_1,f_{21}] = -2af_{11} + 2b f_{10}.
 \end{align}
 We know that $\ell$ must be invariant under $X \in \ff^1 \subseteq \ff^0$, so this forces $a=b=0$, i.e.\ $X \in \ff^2$.  Thus, $\fs = \tgr(\ff)$ has $\fs_1 = 0$.  Since the Lie bracket induces $\fg_0$-module isomorphisms $\fg_2 \times \fg_{-1} \to \fg_1$ and $\fg_3 \times \fg_{-2} \to \fg_1$, then from $\fs_- = \fg_-$ (by homogeneity) and $\fs_1 = 0$, we conclude that $\fs_2 = 0$ and $\fs_3 = 0$.  Hence, $\ff^1 = 0$ and since $\fs_0 = \tgr(\ff^0) \subseteq \fg_0 \cong \fgl_2$, then $\dim(\ff^0) \leq 4$.
 \end{proof}

 In Section \ref{S:CXXO}, we establish the complete classification of complex admissible $(\ff,\ff^0;\ell,\cD)$, while Section \ref{S:RXXO} will treat real forms. 
 
 %%%%%%%%%%%%%%%%%%%%%%%%%%%%%%%%%%%%%%%%%%%%%%%%%%%%%%%%%%%% 
 
 \subsection{Complex classification}
 \label{S:CXXO}
 
 %%%%%%%%%%%%%%%%%%%%%%%%%%%%%%%%%%%%%%%%%%%%%%%%%%%%%%%%%%%% 
  
 \begin{theorem}
 The complete classification of complex admissible $(\ff,\ff^0;\ell,\cD)$ is given in Table \ref{F:XXO}.  Moreover, the models $\mathsf{M7}_\sfa$ and $\mathsf{M7}_\sfb$ are isomorphic iff $\sfb^2 = \sfa^2$.
 \end{theorem}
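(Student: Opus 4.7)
The plan is to stratify admissible models by $\dim(\ff^0) \in \{1,2,3,4\}$, working inside $\fg = \Lie(G_2)$ with the $(2,3,5)$-grading of Section \ref{S:G2}, and to use the adjoint $P$-action to reduce each candidate to a canonical form.

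\emph{Step 1 (graded type).} By the preceding lemma, $\ff^1 = 0$, so $\fs_0 := \tgr(\ff^0)$ embeds into $\fg_0 \cong \fgl_2$ and $\fs = \tgr(\ff) = \fs_0 \op \fg_-$ is the complete graded datum. Using the Levi $G_0 \subset P$ to conjugate, I enumerate $\fs_0$ up to $G_0$-conjugacy: $\fgl_2$ in dimension $4$; $\fsl_2$ or the Borel in dimension $3$; a short list of $2$-dimensional subalgebras (Cartan, Borel of $\fsl_2$, and the abelian types containing a central line); and lines of scalar, semisimple, or nilpotent type in dimension $1$. For each $\fs_0$, I list the $\fs_0$-invariant lines $\bar\ell \subset \fg_{-2} \op \fg_{-1}$ whose projection to the one-dimensional $\fg_{-2} = \langle f_{21}\rangle$ is nonzero, as required by (X.3).

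\emph{Step 2 (filtered deformations, torsion, maximality).} For each pair $(\fs_0, \bar\ell)$, I parametrise filtered deformations $\ff \inj \fg$ by choosing lifts $\hat X = X + T(X)$ of a homogeneous basis of $\fs_{\leq 0}$, with tails $T(X) \in \fg^{>\deg X}$. Bracket closure $\widehat{[X,Y]} \equiv [\hat X, \hat Y] \pmod \ff$ yields polynomial equations in the tail coefficients, solved order by order in homogeneity using Table \ref{F:G2br}. I then impose (X.4) via the harmonic-torsion formula from Appendix \ref{S:torsion}, which is again algebraic in the leading tails. The residual structure group (the stabiliser in $P_+$ of the graded datum $(\fs_0, \bar\ell)$) acts on the space of torsion-free deformations, and is used to set as many tail coefficients as possible to zero. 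Those canonical representatives that are maximal in the sense of (X.5) form the entries of Table \ref{F:XXO}.

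\emph{Step 3 (the $\mathsf{M7}_\sfa$ modulus, and the main obstacle).} In the dimension-$7$ stratum, Step 2 leaves one continuous tail coefficient $\sfa$ after all $P$-normalisations, and the equivalence $\mathsf{M7}_\sfa \cong \mathsf{M7}_\sfb$ iff $\sfa^2 = \sfb^2$ then follows from two ingredients: exhibiting an explicit element of the residual structure group that realises $\sfa \mapsto -\sfa$ (visible as a discrete $\bbZ_2$-symmetry of the normalised tails), and producing a $P$-invariant polynomial in the tails that separates distinct values of $\sfa^2$, ruling out any further identification. The main obstacle is Step 2 in the low-isotropy strata $\dim(\ff^0) \in \{1,2\}$, where the free tail coefficients, the residual structure group, and the torsion constraints are all largest: bracket closure, torsion vanishing, residual normalisation, and the maximality test must be interleaved in one careful bookkeeping, and distinguishing a genuinely surviving continuous modulus (as in $\mathsf{M7}_\sfa$) from apparent moduli absorbed by residual symmetries requires explicit computation throughout.
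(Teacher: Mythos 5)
Your outline follows the same global strategy as the paper: reduce $\fs_0=\tgr(\ff^0)$ to a conjugacy representative inside $\fg_0\cong\fgl_2$ (Table \ref{F:gl2-subalg}), classify filtered deformations by their tails, normalize with the residual $P$-action, impose $\cT\equiv 0$, and keep only maximal models; your treatment of the $\mathsf{M7}_\sfa$ modulus (a residual sign flip $\sfa\mapsto-\sfa$ plus invariance of $\sfa^2$, which in the paper is exactly the scaling $(t_1,t_2)\mapsto(\lambda t_1,\lambda^2 t_2)$ with $t_2$ normalized to $1$) is also the right mechanism. However, there is a genuine gap at precisely the point you flag as ``the main obstacle'': you give no idea for how to actually solve the bracket-closure/torsion/normalization system, and this is where the paper's proof lives. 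The enabling device there is Lemma \ref{L:ss}: whenever $\fs_0$ contains a semisimple element $S_0$ (of the specific forms $\sfZ_1$, $\sfH$, or $\sfZ_1+c\sfH$), the $P_+$-action can be used to kill its tail, so $S=S_0\in\ff^0$; then the unique deformation map satisfies \eqref{E:Shom} and \eqref{E:Sann}, i.e.\ it has positive homogeneity and is $\ad_S$-annihilated, which collapses the tail bookkeeping to a finite weight/eigenvalue computation. Without this (or an equivalent) reduction, ``order by order solving'' in your Step 2 is not a proof but a restatement of the problem, and organizing by $\dim(\ff^0)$ rather than by the type of element in $\fs_0$ obscures exactly the structure that makes the computation finite.

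Two further steps that your plan does not address, and which cannot be skipped: (i) the nilpotent branch $\fs_0=\langle f_{01}\rangle$ (the hardest case and the only source of $\mathsf{M6N}$) requires a separate normalization of both $N$ and the line generator $L\equiv f_{21}+af_{11}\bmod\fp$ (Lemma \ref{L:nil}), closure constraints forcing $t_4=t_7=0$, $t_8=\tfrac{(t_2t_3)^2}{4}\neq 0$, and then the torsion condition \eqref{E:XXOtorsion} yielding the cubic \eqref{E:M6Ntorsion} $a(a-3)(a-4)=0$, with only $a=3$ surviving; your graded-level enumeration of invariant lines does not see this interaction between the line parameter and the tails. (ii) The maximality condition (X.5) is not a formality: most normalized candidates are discarded by exhibiting explicit embeddings into $\mathsf{M9}$, $\mathsf{M8}$ or $\mathsf{M7}$ (sometimes only after a further $P$-translate, as in the $a=4$ case), and conversely $\mathsf{M6N}$ must be shown \emph{not} to embed into any larger model or $P$-translate thereof, which the paper proves by a dedicated normal-form argument on $(N,L)$. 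As written, your proposal would not by itself produce Table \ref{F:XXO}; it needs the semisimple-normalization/weight argument, the full nilpotent-case analysis, and the embedding/non-embedding verifications to become a proof.
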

 
 \begin{table}[h]
% \begin{footnotesize}
 \[
 \begin{array}{|c|c|l|l|l|} \hline
 \mbox{Label} & \ff^0 & \ff^{-1} / \ff^0 & \ff^{-2} / \ff^{-1} & \ff^{-3} / \ff^{-2} \\ \hline\hline
 \mathsf{M9} & \begin{array}{c}
  e_{01}\\
  \sfZ_1, \sfZ_2\\
  f_{01}
 \end{array} &
 \begin{array}{l@{\,}l}
 X_1 &= f_{10} \\
 X_2 &= f_{11}
 \end{array} & 
 \begin{array}{l@{\,}l}
 L = X_3 &= f_{21} \\
 \end{array} &  \begin{array}{l@{\,}l}
 X_4 &= f_{31} \\
 X_5 &= f_{32}
 \end{array} \\ \hline
 %%%%%%%%%%
 \mathsf{M8} &
 \begin{array}{c}
  e_{01}\\
 \sfH\\
  f_{01}
 \end{array} & 
 \begin{array}{l@{\,}l}
 X_1 &= f_{10} + e_{32}\\
 X_2 &= f_{11} + e_{31}
 \end{array} & 
 \begin{array}{l@{\,}l}
 L = X_3 &= f_{21} + e_{21}
 \end{array} &
 \begin{array}{l@{\,}l}
 X_4 &= f_{31} + e_{11} \\
 X_5 &= f_{32} + e_{10} 
 \end{array} \\ \hline
% %%%%%%%%%%
 \mathsf{M7}_\sfa &
 \begin{array}{c}
 \sfZ_2\\
  f_{01}
 \end{array} & 
 \begin{array}{l@{\,}l}
 X_1 &= f_{10} + a \sfZ_1 + e_{10} \\
 X_2 &= f_{11}
 \end{array} & 
 \begin{array}{l@{\,}l}
 L = X_3 &= f_{21}
 \end{array} &
 \begin{array}{l@{\,}l}
 X_4 &= f_{31}\\
 X_5 &= f_{32}
 \end{array} \\ \hline
 %%%%%%%%%%
 \mathsf{M6S} & \sfH & 
 \begin{array}{l@{\,}l}
 X_1 &= f_{10} + e_{11} \\
 X_2 &= f_{11} + e_{10}
 \end{array} &
 \begin{array}{l@{\,}l}
 L = X_3 &= f_{21} + e_{21}
 \end{array} &
 \begin{array}{l@{\,}l}
 X_4 &= f_{31} + e_{32} \\
 X_5 &= f_{32} + e_{31}
 \end{array} \\ \hline
 %%%%%%%%%%
 \mathsf{M6N} & f_{01} & 
 \begin{array}{l@{\,}l}
 X_1 &= f_{10} + 2\sfH + e_{01} + e_{32} \\
 X_2 &= f_{11} - \sfH + e_{31}
 \end{array} &
 \begin{array}{l@{\,}l}
 X_3 &= f_{21} + 3\sfH + e_{21}\\
 L &= X_3 + 3 X_2
 \end{array} &
 \begin{array}{l@{\,}l}
 X_4 &= f_{31} + 2\sfH - e_{01} + e_{11} \\
 X_5 &= f_{32} - \sfH + e_{10}
 \end{array} \\\hline
 \end{array}
 \]
 \caption{Complete classification of complex admissible $(\ff,\ff^0;\ell,\cD)$.  Here, $\ell = \langle L \rangle \mod \ff^0$ and $\cD = \langle X_1,X_2 \rangle \mod \ff^0$.  (Notation: $\sfH = [e_{01},f_{01}] = -\sfZ_1 + 2\sfZ_2$.)}
  \label{F:XXO}
 \end{table}

 %%%%%%%%%%%%%%%%%%%%%%%%%%%%%%%%%%%%%%%%%%%%%%%%%%%%%%%%%%%% 

 \subsubsection{Classification strategy}
 
 %%%%%%%%%%%%%%%%%%%%%%%%%%%%%%%%%%%%%%%%%%%%%%%%%%%%%%%%%%%% 
 
 Focus first on $\fs = \tgr(\ff)$.  Since $\fs_- = \fg_-$, it suffices to determine $\fs_0 = \tgr(\ff^0) \subseteq \fg_0$, where $\fg_0 \cong \fgl_2 \cong \bbC \op \fsl_2$.  The isomorphism $\fg_0 \cong \fgl_2$ is given by $x \mapsto \ad_x|_{\fg_{-1}}$, expressed as a $2\times 2$ matrix in the basis $\{ f_{10}, f_{11} \}$.  In particular, this identifies $\sfZ_1, f_{01}, \sfH, e_{01}$ respectively with
 \begin{align}
 \begin{pmatrix}
 -1 & 0\\
 0 & -1
 \end{pmatrix}, \quad
 \begin{pmatrix}
 0 & 0\\
 1 & 0
 \end{pmatrix}, \quad
 \begin{pmatrix}
 1 & 0\\
 0 & -1
 \end{pmatrix}, \quad 
 \begin{pmatrix}
 0 & 1\\
 0 & 0
 \end{pmatrix},
 \end{align} 
 where 
 \begin{align}
 \sfH := [e_{01},f_{01}] = -\sfZ_1 + 2\sfZ_2.
 \end{align}
 
 The $P$-action induces an action by $G_0\cong \GL_2$ on $\fg_0$, and up to $\GL_2$-conjugacy, the (non-zero) subalgebras of $\fgl_2$ are given in Table \ref{F:gl2-subalg}.  (Here, $*$ denotes arbitrary values, while $\lambda, \lambda_i \in \bbC$ are fixed.)   
   
 \begin{table}[h]
 \[
 \begin{array}{|c|cc|} \hline
 \dim & \multicolumn{2}{|c|}{\mbox{Subalgebras}}\\ \hline\hline
 4 & \fgl_2 & \\
 3 & \fsl_2, & \left\langle \begin{pmatrix}
 * & 0\\
 * & *
 \end{pmatrix} \right\rangle\\
 2 & \left\langle \begin{pmatrix}
 * & 0\\
 0 & *
 \end{pmatrix} \right\rangle, & \left\langle \begin{pmatrix} \lambda_1 & 0\\ * & \lambda_2 \end{pmatrix} \right\rangle\\
 1 & \left\langle \begin{pmatrix}
 \lambda_1 & 0\\ 0 & \lambda_2
 \end{pmatrix} \right\rangle, & 
 \left\langle \begin{pmatrix}
 \lambda & 0\\ 1 & \lambda
 \end{pmatrix} \right\rangle\\ \hline
 \end{array}
 \]
 \caption{Classification over $\bbC$ of subalgebras of $\fgl_2$, up to conjugacy}
 \label{F:gl2-subalg}
 \end{table}
 
 Almost all such subalgebras contain a {\em semisimple} (diagonalizable) element.  Given $S_0 \in 
\fs_0 = \tgr_0(\ff^0)$ (nonzero) semisimple, we use the $P$-action to normalize the tails of $S = S_0 + ... \in \ff^0$ as much as possible.  It turns out that this is always possible (Lemma \ref{L:ss}), so for purposes of giving an outline let us assume henceforth that $S = S_0 \in \ff^0$.  The existence of such a semisimple element strongly restricts the filtered deformations of $\fs$ that can arise.  Namely, choose an $S$-invariant graded subspace $\fs^\perp \subset \fg$ complementary to $\fs \subset \fg$.  Then $\ff$ is spanned by $x + \fd(x) \in \ff^i$, where $x \in \fs_i$, with {\sl tail} $\fd(x) \in \bigoplus_{k > 0} (\fs^\perp)_{i+k}$, i.e.\ this unique {\sl deformation map} $\fd$ is of positive homogeneity: 
 \begin{align} \label{E:Shom}
 \fd \in (\fs^* \otimes \fs^\perp)_+.
 \end{align}
 For $x \in \fs_i$, $x + \fd(x) \in \ff^i$, and $[S,x+\fd(x)] = [S,x] + [S,\fd(x)] \in \ff^i$, where $[S,x] \in \fs_i$ and $[S,\fd(x)] \in \fs^\perp \cap \fg^{i+1}$.  By uniqueness of $\fd$, we have $[S,\fd(x)] = \fd([S,x])$, so $\fd$ is $S$-annihilated:
 \begin{align} \label{E:Sann}
 S \cdot \fd = 0.
 \end{align}
 The two constraints \eqref{E:Shom} and \eqref{E:Sann} give a priori restrictions on the admissible filtered deformations, which will be efficiently obtained via eigenvalue (weight) considerations. %Further constraints on $\fd$ arise from imposing closure conditions.  Certain branches are excluded if there is an {\sl embedding} into a model of larger dimension, i.e.\ there is an injection $\phi : \ff \inj \widetilde\ff$ of filtered Lie algebras such that $\phi(\ell) = \widetilde{\ell}$ and $\phi(\cD) = \widetilde{\cD}$.
 
 The cases not containing a semisimple element are the (1-dimensional) Jordan cases $\left\langle \begin{pmatrix} \lambda & 0\\ 1 & \lambda \end{pmatrix} \right\rangle$, so we may assume $\fs_0 = \langle f_{01} + r \sfZ_1 \rangle$.  The $r \neq 0$ yields no new models (beyond those found when assuming existence of a semisimple element), while the $r=0$ (i.e.\ nilpotent) case is the most complicated case, and leads to one more model.  Here is a (complex) classification summary:
 \begin{align}
 \begin{array}{|c|c|c|} \hline
 \fs_0\mbox{-element} & \mbox{Constraint} & \mbox{Models}\\ \hline\hline
 \sfZ_1 & - & \mathsf{M9}\\ \hline
 \sfH & \sfZ_1 \not\in \fs_0 & \mathsf{M8},\,\,\mathsf{M6S}\\ \hline
 \begin{array}{c} \sfZ_1 + c \sfH \\ (c \neq 0)\end{array} & 
 \fs_0 \subseteq \langle\sfZ_1 + c \sfH, f_{01} \rangle
 & \mathsf{M7} \mbox{ when $c=1$}\\ \hline
 f_{01} + r \sfZ_1 & \dim\,\fs_0 = 1 & \mathsf{M6N} \mbox{ when $r=0$}\\ \hline
 \end{array}
 \end{align} 

%%%%%%%%%%%%%%%%%%%%%%%%%%%%%%%%%%%%%%%%%%%%%%%%%%%%%%%%%%%% 

 \subsubsection{Semisimple cases}
 \label{S:SS}
 
%%%%%%%%%%%%%%%%%%%%%%%%%%%%%%%%%%%%%%%%%%%%%%%%%%%%%%%%%%%%  

 \begin{lemma} \label{L:ss}
 Suppose that one of the following hold:
 \begin{enumerate}
 \item $S_0 = \sfZ_1 \in \fs_0$;
 \item $\sfZ_1 \not\in \fs_0$ and $S_0 = \sfH \in \fs_0$;
 \item $\sfZ_1 \not\in \fs_0$ and $S_0 = \sfZ_1 + c \sfH \in \fs_0$.
 \end{enumerate}
 Then, normalizing by the $P$-action, we may assume that $S = S_0 \in \ff^0$.
% Suppose $0 \neq S_0 := c_1 \sfZ_1 + c_2 \sfH \in \fs_0 = \tgr_0(\ff^0)$.  If $c_1 \neq 0$ and $c_2 \not\in \{ \pm c_1, \pm 3 c_1 \}$, then by adjusting by the $P_+$-action, we may assume that $S = S_0 \in \ff^0$.
 \end{lemma}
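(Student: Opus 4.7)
The plan is to normalize $S = S_0 + S_+$ (with $S_+ \in \fp_+$) via $P$-conjugation in two stages: first reduce $S_+$ into $\ker(\ad_{S_0}|_{\fp_+})$ by iterated $P_+$-conjugation, then eliminate any surviving kernel tail by pairing $S$ with negatively-graded elements of $\ff$ and invoking the hypothesis $\sfZ_1 \notin \fs_0$.

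\emph{Kernel normalisation.} Since $S_0$ is semisimple, each $\ad_{S_0}|_{\fg_k}$ is diagonalisable and splits $\fg_k = \ker \op \mathrm{Im}$. Conjugation by $\exp(X_k)$ with $X_k \in \fg_k$ adjusts the degree-$k$ part of $S$ by $-\ad_{S_0}(X_k)$ modulo higher homogeneity, sweeping out the image. Iterating over $k=1,2,3$, I may assume $S_k \in \ker(\ad_{S_0}|_{\fg_k})$ for each $k \geq 1$. A weight calculation using Figure \ref{F:G2} gives: in (1), $\ad_{\sfZ_1}|_{\fp_+}$ has trivial kernel, so $S = \sfZ_1$ at once; in (2), the kernel on $\fp_+$ is $\langle e_{21}\rangle$; and in (3), the kernel is non-trivial only when $c \in \{\pm 1,\pm 3\}$, and then is spanned by $e_{10}$, $e_{11}$, $e_{31}$, or $e_{32}$ respectively.

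\emph{Removing residual kernel tails.} Write the residual tail as $S_+ = \beta v$ with $v$ a kernel root vector in some $\fg_{k_0}$. Pick $Y \in \ff^{-k_0}$ whose leading part $w = \tgr_{-k_0}(Y) \in \fg_{-k_0}$ is dual to $v$: take $(v,w) = (e_{21},f_{21})$ in case (2), and $(v,w) = (e_{10},f_{10}), (e_{11},f_{11}), (e_{31},f_{31}), (e_{32},f_{32})$ in the four subcases of (3). Table \ref{F:G2br} computes $[v,w]$ as the Cartan element $\sfZ_1$, $\,2\sfZ_1 - 3\sfZ_2$, $\,-\sfZ_1 + 3\sfZ_2$, $\,\sfZ_1 - \sfZ_2$, or $\sfZ_2$ respectively. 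Compute $[S,Y] \in \ff^{-k_0}$ and project to its $\ff$-leading $\fg_0$-component: the term $\beta[v,w]$ contributes to $\fg_0$, whereas $[S_0,w]$ is a scalar multiple of $w$ lying in $\fg_{-k_0}$ (not $\fg_0$), and the deformation tails $Y^i \in \fg_i \cap \fg^\perp$ of $Y$ bracket with $S_0$ into weight spaces of $\fg_0$ disjoint from the Cartan direction occupied by $[v,w]$. Since $\tgr_0^\ff([S,Y]) \in \fs_0$, the hypothesis $\sfZ_1 \notin \fs_0$ forces $\beta = 0$ in all subcases except $v = e_{32}$; in that remaining case, $\sfZ_1 - 3\sfH \in \fs_0$ combined with $\sfZ_1 \notin \fs_0$ implies also $\sfZ_2 \notin \fs_0$, which kills the $\sfZ_2$-contribution analogously.

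The main obstacle is the bookkeeping in the second stage, particularly for case (3): one must verify that the $\fg_0$-contributions from the deformation tails $Y^i$ cannot cancel the Cartan element $\beta[v,w]$ under the projection to $\fs_0$. This reduces to a finite explicit computation in each of the five subcases, using Table \ref{F:G2br} and the $\fs_0$-invariant decomposition $\fg_0 = \fs_0 \op (\fg^\perp \cap \fg_0)$ to track precisely which $\fg_0$-weight directions are populated by each bracket.
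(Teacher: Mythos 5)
Your proposal is correct and follows essentially the same route as the paper: first sweep out the image of $\ad_{S_0}$ on $\fg_+$ by exponentiating along $\fg_1,\fg_2,\fg_3$, then kill the residual kernel tail $\beta v$ by bracketing $S$ against an element of $\ff$ with leading part the dual lowering root vector and using $\sfZ_1\notin\fs_0$, via exactly the same pairs $(e_{21},f_{21})$, $(e_{10},f_{10})$, $(e_{11},f_{11})$, $(e_{31},f_{31})$, $(e_{32},f_{32})$. The only difference is in closing the last step: the paper first invokes Table \ref{F:gl2-subalg} to normalize $\fs_0\subseteq\langle S_0,f_{01}\rangle$ (resp.\ $\fs_0\subseteq\fsl_2$), which makes your deferred ``no cancellation from the $e_{01},f_{01}$ contributions'' check immediate; equivalently, since $S_0\in\fs_0$, projecting onto the zero $\ad_{S_0}$-eigenspace of $\fg_0$ shows $\beta[v,w]\in\fs_0\cap\langle\sfZ_1,\sfZ_2\rangle=\langle S_0\rangle$, and linear independence of $[v,w]$ from $S_0$ gives $\beta=0$.
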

 
 \begin{proof} Let $S_0 = \sfZ_1 + c\sfH \in \fs_0$.  Then $\{ e_{10}, e_{11}, e_{21}, e_{31}, e_{32} \}$ is an $\ad_{S_0}|_{\fg^1}$-eigenbasis with eigenvalues 
 \begin{align}
% (\lambda_1, \lambda_2, \lambda_3, \lambda_4, \lambda_5) = (c_1 - c_2, \quad c_1+c_2, \quad 2 c_1, \quad 3c_1 - c_2, \quad 3c_1 + c_2).
 (\lambda_1, \lambda_2, \lambda_3, \lambda_4, \lambda_5) = (1 - c, \quad 1+c, \quad 2, \quad 3 - c, \quad 3 + c).
 \end{align} 
 Suppose $c \not\in \{ \pm 1, \pm 3 \}$, so all $\lambda_i \neq 0$.  Let $S = S_0 + s_1 e_{10} + s_2 e_{11} + s_3 e_{21} + s_4 e_{31} + s_5 e_{32} \in \ff^0$.  Given $x \in \fg^1$, we have $ \Ad_{\exp(x)}(S) = \exp(\ad_x)(S) = S + [x,S] + \frac{1}{2} [x,[x,S]] + ...$  Restrict to $x = x_1 e_{10} + x_2 e_{11} \in \fg_1$:
 \begin{align}
 \widetilde{S} = \Ad_{\exp(x)}(S) &= S_0 + [x,S_0] + ... = S_0 + \widetilde{s_1} e_{10} + \widetilde{s_2} e_{11} + ...
 \end{align}
 where $(\widetilde{s_1}, \widetilde{s_2}) = (s_1 - x_1 \lambda_1, s_2 - x_2 \lambda_2)$, and the final ellipsis denotes higher homogeneity terms. Choosing $(x_1,x_2) = (\frac{s_1}{\lambda_1}, \frac{s_2}{\lambda_2})$ yields $\widetilde{s_1} = \widetilde{s_2} = 0$ for $\widetilde{S} \in \widetilde\ff^0 \subset \widetilde\ff$.  Dropping tildes, we have $s_1 = s_2 = 0$.  Similarly using $\fg_2$ and $\fg_3$, we normalize $s_3 = s_4 = s_5 = 0$.  This handles (1) and most of (3).
 
  Consider the remaining part of (3), i.e.\ $S_0 = \sfZ_1 + c \sfH \in \fs_0$ with $c \in \{ \pm 1, \pm 3 \}$, with the additional hypothesis $\sfZ_1 \not\in \fs_0$.  From Table \ref{F:gl2-subalg}, we may assume (via the action of $G_0 \subset P$) that $\fs_0 \subseteq \langle \sfZ_1 + c\sfH, f_{01} \rangle$.  As above, we normalize using $P_+$, but there is a residual tail term for $S$ (depending on $s$):
  \begin{align}
  \begin{array}{|c||c|c|c|c|} \hline
  c & -1 & 1 & -3 & 3\\ \hline
  S \in \ff^0 & \sfZ_1 - \sfZ_2 + s e_{11} & \sfZ_2 + s e_{10} & 2\sfZ_1 - 3\sfZ_2 + s e_{32} & -\sfZ_1 + 3\sfZ_2 + s e_{31}\\ \hline
  \end{array}
  \end{align}
    In fact, $s=0$ is forced in each case.  For example when $c = -3$, $\exists X_5 \in \ff^{-3}$ with $X_5 \equiv f_{32} \mod \fp$.  (This is possible since $f_{32} \in \fs_{-3}$ and tails of $X_5$ along $f_{21} \in \fs_{-2}$ and $f_{11}, f_{10} \in \fs_{-1}$ can be eliminated.)
 Then $\ff^0\ni [S,X_5] \equiv s[e_{32},f_{32}] \equiv s\sfZ_2 \,\,\mod \langle f_{01},e_{01} \rangle \op \fg^1$.  But $\fs_0 \subset \langle 2\sfZ_1-3\sfZ_2, f_{01} \rangle$, so $s=0$. (The other cases are similar, using $[e_{11}, f_{11}] = -\sfZ_1 + 3\sfZ_2$, $[e_{10},f_{10}] = 2\sfZ_1 - 3\sfZ_2$, $[e_{31}, f_{31}] = \sfZ_1 - \sfZ_2$.)
    
  Finally, assume (2), i.e. $\sfZ_1 \not\in \fs_0$, but $\sfH \in \fs_0$.  Then $\exists S \in \ff^0$ with $S \equiv \sfH \mod \fg^1$.  Since $\ad_\sfH|_{\fg^1}$ has zero-eigenspace $\fg_2 = \langle e_{21} \rangle$, then we normalize $S = \sfH + s\, e_{21} \in \ff^0$.  We know $\exists X_3 \in \ff^{-2}$ with $X_3 \equiv f_{21} \mod \fp$, and $\ff^0 \ni [S,X_3] \equiv s[e_{21}, f_{21}] \equiv s\sfZ_1 \,\, \mod \langle f_{01}, e_{01} \rangle \op \fg^1$, so $s=0$.
 \end{proof}
 
 \begin{lemma} \label{L:E}
 Suppose $0 \neq S := c_1 \sfZ_1 + c_2 \sfH \in \ff^0$.  If $c_2 \neq \pm c_1$, then the $\ff^0$-invariant line field $\ell = \langle L  \rangle\mod \ff^0 \subset \ff^{-2} / \ff^0$ has the form $L\equiv f_{21} \mod \fp$.
 \end{lemma}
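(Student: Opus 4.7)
The plan is to exploit the fact that $S = c_1 \sfZ_1 + c_2 \sfH \in \fg_0$ is semisimple and grading-preserving, and then extract information about $L$ from a simple weight argument on $\ff^{-2}/\ff^0$.

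First, I would set up the right ambient picture. Since $\ff \hookrightarrow \fg$ is a filtered embedding with $\tgr_-(\ff) = \fg_-$, the natural map $\ff^{-2}/\ff^0 \hookrightarrow \fg^{-2}/\fg^0 = \fg_{-2} \oplus \fg_{-1} = \langle f_{21}, f_{10}, f_{11}\rangle$ is an isomorphism onto the negative graded part. Under this identification the subspace $\cD = \ff^{-1}/\ff^0$ is precisely $\fg_{-1} = \langle f_{10}, f_{11}\rangle$. Writing a generator as $L \equiv a f_{21} + b f_{10} + c f_{11} \pmod{\fp}$, the requirement $\ell \cap \cD = 0$ forces $a \neq 0$, so after rescaling I may take $a = 1$.

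Next I would read off the $\ad_S$ action on $\fg_{-2} \oplus \fg_{-1}$ from the bracket table. Since $\sfZ_1$ has weights $-2, -1, -1$ on $f_{21}, f_{10}, f_{11}$ and $\sfH = -\sfZ_1 + 2\sfZ_2$ has weights $0, 1, -1$ on the same, the $\ad_S$-eigenvalues are
\begin{align*}
f_{21} \colon -2c_1, \qquad f_{10} \colon -c_1 + c_2, \qquad f_{11} \colon -c_1 - c_2.
\end{align*}
Because $\ell$ is $\ff^0$-invariant and $\ff^0 \subseteq \fp$, I get $[S,L] \equiv \mu L \pmod{\fp}$ for some scalar $\mu$. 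The higher-homogeneity tails of $L$ lie in $\fp$ and $\ad_S$ preserves $\fp$, so these tails are invisible modulo $\fp$ and the comparison reduces cleanly to the three graded components. Matching $f_{21}$-coefficients forces $\mu = -2c_1$, and then the other two matchings become
\begin{align*}
(c_1 + c_2)\, b = 0, \qquad (c_1 - c_2)\, c = 0.
\end{align*}

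The hypothesis $c_2 \neq \pm c_1$ makes both factors nonzero, so $b = c = 0$, and $L \equiv f_{21} \pmod{\fp}$ as claimed. There is no real obstacle here; the only subtlety worth double-checking is that passing from ``$\ff^0$-invariance'' (the genuine condition) to ``mod-$\fp$ eigenvector'' (what I actually use) loses no information for this conclusion, since $\ff^0 \subseteq \fp$ implies modulo-$\ff^0$ equalities descend to modulo-$\fp$ equalities, and that suffices to pin down the leading part of $L$.
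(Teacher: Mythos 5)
Your argument is correct and is essentially the paper's own proof: the paper likewise notes that the $\ad_S$-eigenvalues of $f_{21},f_{11},f_{10}$ are $-2c_1,\,-c_1-c_2,\,-c_1+c_2$ and that $c_2\neq\pm c_1$ makes the last two distinct from the first, so $\ff^0$-invariance of $\ell$ forces $L\equiv f_{21}\mod\fp$. You have merely spelled out the implicit details (the identification $\ff^{-2}/\ff^0\cong\fg_{-2}\op\fg_{-1}$, the use of $\ell\cap\cD=0$ to get a nonzero $f_{21}$-component, and the irrelevance of the tails since $\ad_S$ preserves $\fp$), all of which are accurate.
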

 
 \begin{proof} The $\ad_S$-eigenvalues of $f_{21}, f_{11}, f_{10}$ are $-2c_1, -c_1 - c_2, -c_1 + c_2$.  By hypothesis, the last two are distinct from the first, so $\ff^0$-invariance of $\ell$ implies the result.
 \end{proof}
 
 %%%%%%%%%%%%%%%%%%%%%%%%%%%%%%%%%%%%%%%%%%%%%%%%%%%%%%%%%%%% 

 \subsubsection{\framebox{$\sfZ_1 \in \fs_0$}}  

 %%%%%%%%%%%%%%%%%%%%%%%%%%%%%%%%%%%%%%%%%%%%%%%%%%%%%%%%%%%% 
 
 By Lemma \ref{L:ss}, assume $S := \sfZ_1 \in \ff^0$.  Take $\fs^\perp = \fg_+$.  then $\fd = 0$ by \eqref{E:Shom} and \eqref{E:Sann}.  Thus, $\ff = \langle f_{10}, f_{11}, f_{21}, f_{31}, f_{32} \rangle \op \ff^0$. Note that $f_{21}$ spans the $-2$ eigenspace of $\ad_{\sfZ_1}|_\fg$, so $\ell = \langle f_{21} \rangle \mod \ff^0$. 
 Let $U = U_0 + U_+ \in \ff^0$, where $U_0 \in \fg_0$ and $U_+ \in \fg_+$.  Then $[\sfZ_1, U] = [\sfZ_1,U_+] \in \ff^1 = 0$.  But $\ker(\ad_{\sfZ_1}|_{\fg_+}) = 0$, so $U_+ = 0$. Thus, $\ff^0 \inj \widetilde\ff^0 := \fg_0 = \fgl_2$, which induces $\ff \inj \widetilde\ff$ and $\ell \inj \widetilde{\ell} := \langle f_{21} \mod \widetilde\ff^0 \rangle$.  Since $\dim(\widetilde\ff) = 9$, we label the tilded structure as $\mathsf{M9}$.
 
 %%%%%%%%%%%%%%%%%%%%%%%%%%%%%%%%%%%%%%%%%%%%%%%%%%%%%%%%%%%% 
 
 \subsubsection{\framebox{$\sfZ_1 \not\in \fs_0$ and $\sfH \in \fs_0$}} 
 
 %%%%%%%%%%%%%%%%%%%%%%%%%%%%%%%%%%%%%%%%%%%%%%%%%%%%%%%%%%%% 
 
 From Table \ref{F:gl2-subalg}, we have $\fs_0 \subseteq \fsl_2 = \langle f_{01}, \sfH, e_{01} \rangle$, and by Lemma \ref{L:ss}, we may assume $S := \sfH \in \ff^0$.  Note $[\sfH,e_{01}] = 2 e_{01}$ and $[\sfH,f_{01}] = -2 f_{01}$, while $\pm 2$ are not eigenvalues of $\ad_\sfH|_{\fg_+}$.  By closure under $\ad_\sfH$, we must have $\ff^0 \subseteq \langle f_{01}, \sfH, e_{01} \rangle$.
 
  Choose $\fs^\perp = \langle f_{01}, \sfZ_1, e_{01} \rangle \op \fg_+$.  Then $\fd \in (\fs^* \otimes \fs^\perp)_+$ satisfies $\sfH \cdot \fd = 0$, i.e.\ $\fd$ is a sum of weight vectors with weights that are multiples of $2\alpha_1 + \alpha_2$.  For example, take $\ff^{-1} \ni X_1 \equiv f_{10} \mod \fp$.  The tail terms correspond to
 \begin{align}
 & f_{10}^* \otimes f_{01}, \quad
 f_{10}^* \otimes \sfZ_1, \quad
 f_{10}^* \otimes \sfZ_2, \quad
 f_{10}^* \otimes e_{01},\\
 & f_{10}^* \otimes e_{10}, \quad
 f_{10}^* \otimes e_{11}, \quad
 f_{10}^* \otimes e_{21}, \quad
 f_{10}^* \otimes e_{31}, \quad
 f_{10}^* \otimes e_{32},
 \end{align}
 but $\sfH = -\sfZ_1 + 2\sfZ_2$ acts on these with eigenvalues $-3, -1, -1, 1$ in the first row, and $-2,0,-1,-2,0$ in the second row.  Since $\sfH \cdot \fd = 0$, then only the zero eigenvalue terms are relevant, i.e.\ $f_{10}^* \otimes e_{11}$ and $f_{10}^* \otimes e_{32}$, with weights $2\alpha_1 + \alpha_2$ and $4\alpha_1 + 2\alpha_2$ respectively.  Consequently, 
 \begin{align}
  X_1 &= f_{10} + t_1 e_{11} + t_2 e_{32} \in \ff^{-1}.
 \end{align} 
 Doing this similarly for $X_2 \in \ff^{-1}$ with $X_2 \equiv f_{11} \mod \fp$, we must have
  \begin{align}
  X_2 &= f_{11} + t_3 e_{10} + t_4 e_{31} \in \ff^{-1}.
  \end{align}
  
  Now use the residual exponential freedom $\exp(\fg_2) \subset P_+$ to normalize: using $\exp(x e_{21})$, we have
  \begin{align}
 (\widetilde{t_1},\widetilde{t_3}) =  (t_1 - 2x, t_3 + 2x).
  \end{align}
  Setting $x = \frac{t_1 - t_3}{4}$, we ensure $\widetilde{t_1} = \widetilde{t_3}$.  Dropping tildes, we may assume that \framebox{$t_1 = t_3 =: a$}.  Then:
  \begin{align}
  \ff^{-2} \ni X_3 &:= -\frac{1}{2} [X_1,X_2] + \frac{3a}{2} \sfH \,\,\,= f_{21} + \left(a^2 + \frac{t_2 + t_4}{2}\right) e_{21}, \label{E:ss2X3}\\
  \ff^{-3} \ni X_4 &:= +\frac{1}{3} [X_1,X_3] - \frac{2a}{3} X_1 = f_{31} + \frac{2t_2+t_4}{3} e_{11} + a\left(a^2 -\frac{t_2}{6} + \frac{t_4}{2} \right) e_{32}, \\
  \ff^{-3} \ni X_5 &:= -\frac{1}{3} [X_2,X_3] - \frac{2a}{3} X_2 = f_{32} + \frac{t_2 + 2 t_4}{3} e_{10} + a\left( a^2 + \frac{t_2}{2} - \frac{t_4}{6} \right) e_{31}.
  \end{align}
  
 \begin{lemma} We have \framebox{$t_2 = t_4 =:b$} with \framebox{$ab = 0$}, but $(a,b) \neq (0,0)$.  Moreover, $\ell = \langle X_3 \rangle \mod \ff^0$.
 \end{lemma}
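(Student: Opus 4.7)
The plan is to impose closure of $\ff$ under the Lie bracket, specifically by computing $[X_1,X_5]$ (or, by the evident $1 \leftrightarrow 2$ symmetry, $[X_2,X_4]$). This is the first bracket among the distinguished generators whose graded leading part $[f_{10},f_{32}]=0$ vanishes, so the subleading contributions will be forced to satisfy nontrivial relations. Expanding term-by-term using Table \ref{F:G2br}, and converting $\sfZ_2 = \tfrac12(\sfH+\sfZ_1)$ so as to express everything in the basis $\{\sfZ_1,\sfH\}$ of $\fg_0$ adapted to our case, I expect the computation to collapse to
\[
[X_1,X_5] \;=\; -a\,X_3 \;+\; \tfrac{t_2-t_4}{3}\,\sfZ_1 \;+\; (t_2+t_4)\,\sfH \;-\; \tfrac{2a(t_2+t_4)}{3}\, e_{21}.
\]

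From here I would read off the constraints. Since $\ff^1=0$ forces $\ff^0\subseteq\fg_0$ and $\ff\cap\fg_{\geq 1}=0$, the $\fg_0$-part of $[X_1,X_5]$ must lie in $\ff^0\subseteq\langle f_{01},\sfH,e_{01}\rangle$ (which excludes $\sfZ_1$ by our case assumption), and its $\fg_2$-part must vanish. The former yields $t_2=t_4=:b$; the latter, after substitution, yields $ab=0$. Meanwhile, the $\sfH$-term $(t_2+t_4)\sfH=2b\sfH$ is absorbed into $\ff^0\ni\sfH$, and the $-aX_3$ term lies in $\ff$ automatically. The nondegeneracy $(a,b)\neq(0,0)$ then follows from maximality: if both vanish, every $X_i$ reduces to its leading graded part and $\ff=\fg_-\op\ff^0$ with $\ff^0\subseteq\fsl_2\subsetneq\fgl_2$; the inclusion $\ff\hookrightarrow\widetilde\ff:=\fg_-\op\fgl_2$ into the $\mathsf{M9}$-model is a filtered embedding inducing the required isomorphism $\ff^{-2}/\ff^0\cong\widetilde\ff^{-2}/\widetilde\ff^0$ and sending $\ell=\langle f_{21}\rangle$ to $\widetilde\ell$, contradicting the maximality axiom (X.5).

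For the claim $\ell=\langle X_3\rangle\mod\ff^0$, I would apply Lemma \ref{L:E} with $S=\sfH$ (i.e.\ $c_1=0$, $c_2=1$, satisfying $c_2\neq\pm c_1$), obtaining $L\equiv f_{21}\mod\fp$ for any generator $L$ of $\ell$. Since $X_3=f_{21}+\tau\,e_{21}$ (for some scalar $\tau$) also satisfies $X_3\equiv f_{21}\mod\fp$, the difference $L-X_3$ lies in $\ff\cap\fp=\ff^0$, yielding $\ell=\langle X_3\rangle\mod\ff^0$. The main obstacle is the $[X_1,X_5]$ expansion itself: there are nine bracket products to consider, several vanishing only because the relevant sum of $G_2$-roots fails to be a root, and the collapse into the clean form above requires careful tracking of signs and of the $\sfZ_2\to\sfH$ conversion before the decisive $\sfZ_1$- and $e_{21}$-coefficients can be isolated.
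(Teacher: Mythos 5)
Your proposal is correct and follows essentially the same route as the paper: the expansion of $[X_1,X_5]$ you display is exactly right, the constraints $t_2=t_4$ and $ab=0$ come from the residual term $\tfrac{t_2-t_4}{3}\sfZ_1-\tfrac{2a(t_2+t_4)}{3}e_{21}$ lying in $\ff^0\subseteq\langle f_{01},\sfH,e_{01}\rangle$, the case $(a,b)=(0,0)$ is excluded by embedding into $\mathsf{M9}$ against axiom (X.5), and the line-field statement follows from Lemma \ref{L:E} just as in the paper. One minor caveat: the containment $\ff^0\subseteq\fg_0$ is not forced by $\ff^1=0$ alone, but rather by the $\ad_\sfH$-closure argument giving $\ff^0\subseteq\langle f_{01},\sfH,e_{01}\rangle$ at the start of this case (which the paper, in its $a=b=0$ subcase, even re-verifies via $\ff^0$-invariance of $\ell$); since you invoke that containment anyway, the argument is sound.
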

 
 \begin{proof} Note $[X_1,X_5] + a X_3 - (t_2+t_4)\sfH = \frac{t_2 - t_4}{3} \sfZ_1 - \frac{2a(t_2 + t_4)}{3} e_{21} \in \ff^0$. But $\sfZ_1 \not\in \fs_0$ by hypothesis, so $t_2 = t_4 =: b$ and the above lies in $\ff^2 = 0$, which forces $ab = 0$.  Lemma \ref{L:E} and \eqref{E:ss2X3} imply $\ell = \langle X_3 \rangle \mod \ff^0$.

 Assume $a=b=0$.  Then $f_{10}, f_{11}, f_{21}, f_{31}, f_{32} \in \ff$ and $\ell = \langle f_{21} \rangle \mod \ff^0$.  Let $U = r \sfZ_1 + U_0 + U_+ \in \ff^0$, where $U_0 \in \langle f_{01}, \sfH, e_{01} \rangle \subset \fg_0$ and $U_+ \in \fg_+$.  Then $[U,f_{21}] = -2r f_{21} + [U_+,f_{21}]$.  But $\ff^0$-invariance of $\ell$ forces $[U_+,f_{21}] \in \ff^0$, while $\im(\ad_{f_{21}}|_{\fg_+}) = \langle f_{10}, f_{11}, \sfZ_1, e_{10}, e_{11} \rangle$, so we must have $[U_+,f_{21}] = 0$ (since $\sfZ_1 \not\in \fs_0$ and $\ff^1 = 0$).  Since $\ker(\ad_{f_{21}}|_{\fg_+}) = 0$, then $U_+ = 0$. Thus, we have an inclusion into the $\mathsf{M9}$ model, which contradicts maximality.
 \end{proof}
 
 %%%%%%%%%%%%%%%%%%%%%%%%%%%%%
  
 \subsubsection{\framebox{$a= 0$, $b \neq 0$}} Use $\exp(t \sfZ_1)$ to normalize $1 = \widetilde{b} = b \exp(4t)$.  Dropping tildes:
  \begin{align}
 X_1 = f_{10} + e_{32}, \quad
 X_2 = f_{11} + e_{31}, \quad
 X_3 = f_{21} + e_{21}, \quad
 X_4 = f_{31} + e_{11}, \quad
 X_5 = f_{32} + e_{10}.
 \end{align}
 Note $[X_1,X_4] = 4e_{01}$ and $[X_2,X_5] = 4 f_{01}$, so $e_{01}, f_{01} \in \ff^0$.  Since $\sfZ_1 \not\in \fs_0$, then $\ff^0 = \langle f_{01}, \sfH, e_{01} \rangle$ and $\dim(\ff) = 8$.   The $\ff^0$-invariant line field is $\ell = \langle X_3 \rangle \mod \ff^0$.  We label this structure $\mathsf{M8}$. 
  
 %%%%%%%%%%%%%%%%%%%%%%%%%%%%%

 \subsubsection{\framebox{$a\neq 0$, $b= 0$}} Use $\exp(x \sfZ_1)$ to normalize $1 = \widetilde{a} = a \exp(2x)$.  Dropping tildes:
 \begin{align}
 X_1 = f_{10} + e_{11}, \quad
 X_2 = f_{11} + e_{10}, \quad
 X_3 = f_{21} + e_{21}, \quad
 X_4 = f_{31} + e_{32}, \quad
 X_5 = f_{32} + e_{31}.
 \end{align}
 We know $\sfH \in \ff^0$, and $\sfH$-invariance of the line field forces $\ell = \langle X_3 \rangle \mod \ff^0$.
 
Recall that $\langle \sfH \rangle \subseteq \ff^0 \subseteq \langle f_{01}, \sfH, e_{01} \rangle$.   We have $\ff^0$-invariant $\cD = \ff^{-1} / \ff^0 = \langle X_1, X_2 \rangle \mod \ff^0$, but if $U = u_1 f_{01} + u_2 e_{01} \in \ff^0$, then $[U,X_1] - u_2 X_2 = -2u_2 e_{10}$ and $[U,X_2] - u_1X_1= -2 u_1 e_{11}$, which forces $u_1 = u_2 = 0$ since both lie in $\ff^1 = 0$.  Thus, $\ff^0 = \langle \sfH \rangle$, and $\dim(\ff^0) = 6$.  We label this structure $\mathsf{M6S}$.
 
 %%%%%%%%%%%%%%%%%%%%%%%%%%%%%
 
 \subsubsection{Generic case: \framebox{$\sfZ_1 \not\in \fs_0$,\, $S_0 := \sfZ_1 + c\sfH \in \fs_0$ with $c^2 \in \bbC \backslash \{ 0, 1, 9 \}$}}  Via the $G_0$-action, we have either $\fs_0 = \langle S_0, f_{01} \rangle$ or $\fs_0 = \langle S_0 \rangle$, and define $\fs^\perp = \langle \sfZ_1, e_{01} \rangle$ or $\fs^\perp = \langle f_{01}, \sfZ_1, e_{01} \rangle$ respectively.
 By Lemma \ref{L:ss}, we may assume $S := \sfZ_1 + c\sfH \in \ff^0$.  The zero $S$-eigenvalue terms in $f_{10}^* \otimes \fs^\perp$ and $f_{11}^* \otimes \fs^\perp$ correspond to the admissible filtered deformations:
 \begin{align}
 X_1 &= f_{10} + t_{1/3} f_{01} + t_2 e_{31} \in \ff^{-1},\\
 X_2 &= f_{11} + t_{-1/3} e_{01} + t_{-2} e_{32} \in \ff^{-1},
 \end{align}
 where $t_i \in \delta^c_i \bbC := \begin{cases} \bbC, & c = i;\\ 0, & c \neq i. \end{cases}$  Since $t_i t_j = 0$ for $i \neq j$, then taking brackets yields
 \begin{align}
 X_3 = f_{21} \in \ff^{-2}, \quad
 X_4 = f_{31} - \frac{t_2}{3} e_{10} \in \ff^{-3}, \quad 
 X_5 = f_{32} - \frac{t_{-2}}{3} e_{11} \in \ff^{-3}.
 \end{align}
 Furthermore,
 \begin{align}
 \ff\ni [X_1,X_4] &= t_2 \left(\frac{5}{3} \sfZ_1 - 2\sfZ_2\right) - t_{1/3} f_{32},\\
 \ff\ni [X_2,X_5] &= t_{-2} \left( -\frac{1}{3} \sfZ_1 + 2\sfZ_2 \right) - t_{-1/3} f_{31}.
 \end{align}
 When $c = \pm 2$, these lie in $\ff^0$. We have $\fs_0 \subseteq \langle -\sfZ_1 + 4\sfZ_2, f_{01} \rangle$ when $c=2$, while $\fs_0 \subseteq \langle 3\sfZ_1 - 4\sfZ_2, f_{01} \rangle$ when $c=-2$.  Thus, $t_2 = t_{-2} = 0$.  Since $f_{21}, f_{11}, f_{10}$ have distinct $\ad_S$-eigenvalues, then $S$-invariance forces $\ell = \langle X_3 \rangle \mod \ff^0$.
 
 If $\dim(\ff^0) = 1$, there is a clear inclusion into the $\mathsf{M9}$ model.  If $\dim(\ff^0) = 2$, then the zero $S$-eigenvalue terms of $f_{01}^* \otimes \fs^\perp$ with positive homogeneity force
 \begin{align}
 N = f_{01} + n_{-1/3} e_{11} \in \ff^0,
 \end{align}
 where $n_{-1/3} \in \delta^c_{-1/3} \bbC$.
 But then $[N,X_1] - X_2 = -(t_{-1/3} + 3n_{-1/3}) e_{01} \in \ff^0$.  Since $e_{01} \not\in \fs_0$, then $t_{-1/3} = -3n_{-1/3}$.  Hence, $[N,X_2] =  n_{-1/3}\left( -4 \sfZ_1 + 9 \sfZ_2\right) \in \ff^0$.  Since $\fs_0 = \langle \frac{4}{3} \sfZ_1 - \frac{2}{3} \sfZ_2, f_{01} \rangle$ when $c = -\frac{1}{3}$, then $n_{-1/3} = 0$.  We again embed into $\mathsf{M9}$.
 
 %%%%%%%%%%%%%%%%%%%%%%%%%%%%%%%%%%%%%%%%%%%%%%%%%%%%%%%%%%%% 
 
 \subsubsection{Non-generic cases}
 
%%%%%%%%%%%%%%%%%%%%%%%%%%%%%%%%%%%%%%%%%%%%%%%%%%%%%%%%%%%% 
 Let $c \in \{ \pm 1, \pm 3 \}$.  By Lemma \ref{L:ss}, assume $S := \sfZ_1 + c \sfH \in \ff^0$ (or a multiple thereof).
See Table \ref{F:1Diso} for a summary.

 \begin{table}[h]
  \[
 \begin{array}{|@{}c@{}|c|c|c|c|@{}c@{}|c|cc} \hline
 c & -1 & 1 & -3 & 3 \\ \hline\hline
 S \in \ff^0 & \sfZ_1 - \sfZ_2 & \sfZ_2 & 2\sfZ_1 - 3\sfZ_2 & -\sfZ_1 + 3\sfZ_2\\ \hline
 \mbox{Freedom in $P_+$} & \exp(\langle e_{11} \rangle) & \exp(\langle e_{10} \rangle) & \exp(\langle e_{32} \rangle) & \exp(\langle e_{31} \rangle) \\ \hline
 \begin{array}{c} N \in \ff^0 \\
 {\tiny \mbox{if $\dim(\ff^0) = 2$}}
 \end{array} & f_{01} + n_1 e_{10} + n_2 e_{21} + n_3 e_{32} &  f_{01} & f_{01} + n e_{31} & f_{01} \\ \hline
 X_1 \in \ff^{-1} & f_{10} + t_1 e_{01} & f_{10} + t_1\sfZ_1 + t_2 e_{10} & f_{10} & f_{10} + t e_{21}\\ \hline
 X_2 \in \ff^{-1} & f_{11} + t_2 \sfZ_1 + t_3 e_{11} & f_{11} + t_3 f_{01} & f_{11} + t e_{21} & f_{11} \\ \hline
 \begin{array}{c}
 {\tiny \mbox{Line field generator}}\\
 X_3 \in \ff^{-2} 
 \end{array}
 & f_{21} + t_4 f_{10} + t_5 e_{01} & f_{21} + t_4 f_{11} + t_5 f_{01} & f_{21} - t e_{11} & f_{21} - t e_{10}\\ \hline
 {\tiny \begin{tabular}{c}
 Normalization\\
 via freedom in $P_+$
 \end{tabular}} & t_4 = 0 & t_4 = 0 & t = 0 & t = 0\\ \hline
 \mbox{Maximal?}  & \mbox{No } (\mathsf{M9}) & \mbox{No } (\mathsf{M7}) & \mbox{No } (\mathsf{M9}) & \mbox{No } (\mathsf{M9})\\ \hline
 \end{array}
 \]
 \caption{$\dim(\ff^0) = 1$ cases with $\ell = \langle X_3 \rangle \mod \ff^0$ and $\cD = \langle X_1,X_2 \rangle \mod \ff^0$}
 \label{F:1Diso}
 \end{table}
 We proceed by classifying $S$-invariant filtered deformations as above.  This yields $S,X_1,X_2,X_3$ (and $N$ if $\dim(\ff^0) = 2$) as in Table \ref{F:1Diso}.  The residual freedom in $P_+$ stabilizing the normalization of $S$ is used to normalize the generator $X_3 \mod \ff^0$ of the line field $\ell \subset \ff^{-2} / \ff^0$.  When $c=\pm 3$, the model embeds into $\mathsf{M9}$.  (When $c=-3$ and $\ff^0 = \langle S,N \rangle$, we observe $n=0$ from $[N,X_3] = -n e_{10} \in \ff^1 = 0$.)  So it remains to examine $c = \pm 1$.
 
Suppose $\ff^0 = \langle S \rangle$.  There exists a $G_0$ element $\sigma$ that induces $(\sfZ_1,\sfH) \mapsto (\sfZ_1, -\sfH)$, which induces $c \mapsto -c$.  (In terms of $2\times 2$ matrices, $\sigma = \begin{pmatrix} 0 & 1\\ 1 & 0 \end{pmatrix}$, and $\sigma \diag(1+c,1-c) \sigma^{-1} = \diag(1-c,1+c)$.)  Thus, it suffices to examine $c=1$.  In this case, the model embeds into $\mathsf{M7}$ by enlarging $\ff^0$ by $f_{01}$.

Suppose $\ff^0 = \langle S, N \rangle$.  
\begin{itemize}
\item $c = 1$: we can WLOG take $t_3 = t_5 = 0$ by adding multiples of $N$.   Necessarily $t_2 \neq 0$, otherwise the model embeds into $\mathsf{M9}$.  Applying a rescaling by $\exp(\langle \sfZ_1 \rangle) \subset G_0$, we get $\langle X_1 \rangle \mapsto \langle \lambda^{-1} f_{10} + t_1 \sfZ_1 + \lambda t_2 e_{10} \rangle = \langle f_{10} + \widetilde{t}_1 \sfZ_1 + \widetilde{t}_2 e_{10} \rangle$, where $(\widetilde{t}_1,\widetilde{t}_2) = (\lambda t_1, \lambda^2 t_2)$.  Over $\bbC$, we normalize $t_2 = 1$, and write $t_1 =: \sfa$.  We label these structures $\mathsf{M7_a}$. 
 \item $c=-1$: take $S,N,X_1,X_2,X_3$ as in Table \ref{F:1Diso}.  Then
\begin{align}
 [N,X_3] - 2 t_5 S = -2 n_1 f_{11} + (n_2 - t_5) \sfZ_1 + (n_1 t_5 + n_3) e_{11} \in \ff^{-1}.
\end{align}
 Since $\ell = \langle X_3 \rangle \mod \ff^0$ is $\ff^0$-invariant, then $n_1 = 0$.  But then $[N,X_3] \in \ff^0$ and $\ff^1 = 0$ forces
 $n_2 = t_5$ and $n_3 = 0$.  Furthermore,
 \begin{align}
 [N,X_1]-X_2 - 2t_1 S = -(t_1 + t_2) \sfZ_1 - (t_3 + 2t_5) e_{11} \in \ff^0,
 \end{align}
 so $t_2 = -t_1$ and $t_3 = -2t_5$.  Moreover, $[N,X_2] = 4t_5 e_{10} + 2t_1 t_5 e_{21} + 6 t_5^2 e_{32} \in \ff^1$, so $t_5 = 0$ is forced.  This yields $(S,N,X_1,X_2,X_3,X_4,X_5) = (\sfZ_1 - \sfZ_2,f_{01},f_{10} + t_1 e_{01}, f_{11} - t_1 \sfZ_1, f_{21}, f_{31}, f_{32})$, which embeds into $\mathsf{M9}$.
 \end{itemize}

We have completed our study of all $\fs_0$ that contains a semisimple element.  From Table \ref{F:gl2-subalg}, it remains to consider $\ff^0 = \langle N \rangle$ with $N_0 = \tgr_0(N) =  f_{01} + r \sfZ_1 \in \fs_0$.
 
 %%%%%%%%%%%%%%%%%%%%%%%%%%%%%%%%%%%%%%%%%%%%%%%%%%%%%%%%%%%% 
 \subsubsection{Jordan non-nilpotent case}
 %%%%%%%%%%%%%%%%%%%%%%%%%%%%%%%%%%%%%%%%%%%%%%%%%%%%%%%%%%%% 
  Let $\ff^0 = \langle N \rangle$ with $N_0 = \tgr_0(N) =  f_{01} + r \sfZ_1 \in \fs_0$.
 Since $r \neq 0$, then $\ad_{N_0}|_{\fg_+}$ is injective, since it maps
 \begin{align}
(e_{10},e_{11}, e_{21}, e_{31}, e_{32}) \mapsto
 (r e_{10},re_{11} - e_{10}, 2r e_{21}, 3r e_{31}, 3r e_{32} + e_{31}).
 \end{align}
 Normalizing via the $P_+$-action, we may assume $N = N_0 \in \ff^0$.  Let $\ff^{-1} \ni X_2 \equiv f_{11} \mod \fp$.  Since $\tgr_{-1}([N,X_2]) = -rf_{11}$, then $[N,X_2] + rX_2 \in \ff^0 = \langle N \rangle$.  Evaluation yields $X_2 = f_{11} \mod \ff^0$, so WLOG $X_2 = f_{11}$.  Similarly, take $\ff^{-1} \ni X_1 \equiv f_{10} \mod \fp$. We find that $[S,X_1] + r X_1 - X_2 \in \ff^0 = \langle N \rangle$.  This forces $X_1 \equiv f_{10} \mod \ff^0$, so WLOG $X_1 = f_{10}$.  Taking brackets, we find $\ff^{-2} = \langle f_{21}, f_{11}, f_{10}, N \rangle$.  The only $N$-invariant line field is $\ell = \langle X_3 \rangle \mod \ff^0$ with $X_3 = f_{21} \in \ff^{-2}$.  This model embeds into $\mathsf{M9}$.

 %%%%%%%%%%%%%%%%%%%%%%%%%%%%%%%%%%%%%%%%%%%%%%%%%%%%%%%%%%%% 
 \subsubsection{Nilpotent case}
 %%%%%%%%%%%%%%%%%%%%%%%%%%%%%%%%%%%%%%%%%%%%%%%%%%%%%%%%%%%% 
 Let $\ff^0 = \langle N \rangle$ with $N_0 = \tgr_0(N) =  f_{01} \in \fs_0$.
 
 \begin{lemma} \label{L:nil} Let $N_0 = f_{01} \in \fs_0$.  Normalizing by the $P$-action, we may assume that 
 \begin{align}
 N = N_0 \in \ff^0, \quad \ell = \langle L \rangle \mod \ff^0,
 \end{align}
 where, for some $a \in \bbC$, we have
 \begin{align}
 \ff^{-2} \ni L \equiv f_{21} + a f_{11} \quad\mod \fp.
 \end{align}
 The element $\exp(x_1 e_{10} + x_2 e_{21} + x_3 e_{31}) \in P_+$ preserving $N = N_0$ acts via $\widetilde{a} = a - 2x_1$.
 \end{lemma}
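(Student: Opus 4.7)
The plan is threefold: first, normalize $N$ to equal $N_0 = f_{01}$ exactly using the $P$-action; second, determine the form of the line-field generator $L$ from $\ff^0$-invariance; and third, track the residual $P_+$-action on the parameter $a$.

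To begin, I write $N = f_{01} + N_+$ with $N_+ \in \fg_+$ and compute $\ad_{f_{01}}$ on the basis $(e_{10}, e_{11}, e_{21}, e_{31}, e_{32})$ of $\fg_+$ from Table \ref{F:G2br}, finding that its image is $\langle e_{10}, e_{31}\rangle$ and its kernel is $\langle e_{10}, e_{21}, e_{31}\rangle$. Conjugating by $\Ad_{\exp(y)}$ for suitably chosen $y \in \langle e_{11}, e_{32}\rangle$ (done homogeneity by homogeneity) kills the components of $N_+$ lying in the image, leaving $N = f_{01} + \alpha_2 e_{11} + \alpha_3 e_{21} + \alpha_5 e_{32}$ with tails confined to the complementary subspace $\langle e_{11}, e_{21}, e_{32}\rangle$.

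The main obstacle is showing these residual tails must vanish. For this, I take lifts $X_1, X_2 \in \ff^{-1}$ with $\tgr_{-1}(X_i) = f_{1,i-1}$ and $X_3 \in \ff^{-2}$ with $\tgr_{-2}(X_3) = f_{21}$, using the freedom to add multiples of $N$ (and the residual $P_+$-action preserving $N_0$) to standardize their $\fg_{\geq 0}$-tails. Expanding $[N, X_1] \in \ff^{-1}$ by the $\fg$-grading, its $\fg_0$-component contains the term $[\alpha_2 e_{11}, f_{10}] = -3\alpha_2 e_{01}$, which must be absorbable into the $\fg_0$-projection of $\ff^0 = \langle f_{01}\rangle$ together with the normalized $\fg_0$-tails of $X_1, X_2$; since $e_{01} \notin \langle f_{01}\rangle$, this forces $\alpha_2 = 0$. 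Parallel analyses of $[N, X_3]$ (in which $[\alpha_3 e_{21}, f_{21}] = \alpha_3 \sfZ_1$ appears) and $[N, X_2]$ (isolating the $e_{32}$ tail) force $\alpha_3 = \alpha_5 = 0$, yielding $N = f_{01}$.

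With $N = f_{01}$, write $L \equiv f_{21} + a f_{11} + b f_{10} \mod \fp$ (the $\fg_0$-part being absorbable into $\langle f_{01}\rangle$). The $\ff^0$-invariance $[f_{01}, L] \equiv \mu L \mod \ff^0$, together with $[f_{01}, f_{21}] = 0$ and $\ad_{f_{01}}$ preserving homogeneity, forces $\mu = 0$ upon comparing $\fg_{-2}$-components. Then $[f_{01}, L] \in \ff^0 = \langle f_{01}\rangle \subset \fg_0$ must have vanishing $\fg_{-1}$-component $b f_{11}$, so $b = 0$ and $L \equiv f_{21} + a f_{11} \mod \fp$. Finally, the $P_+$-stabilizer of $f_{01}$ at the Lie-algebra level is $\ker(\ad_{f_{01}}|_{\fg_+}) = \langle e_{10}, e_{21}, e_{31}\rangle$, whose generators commute with $f_{01}$ and therefore fix it exactly under exponentiation. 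A direct check of Table \ref{F:G2br} shows that only $[e_{10}, f_{21}] = -2 f_{11}$ contributes modulo $\fp$ to the adjoint action on $L$ (the brackets $[e_{21}, f_{21}], [e_{31}, f_{21}], [e_{10}, f_{11}], \ldots$ all landing in $\fp$), giving $\widetilde{a} = a - 2x_1$ as claimed.
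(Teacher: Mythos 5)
Your first normalization (using $\exp\langle e_{11},e_{32}\rangle$ to remove the $e_{10}$- and $e_{31}$-components of $N$), your derivation of the form of $L$ once $N=f_{01}$ is achieved, and your computation of the residual action $\widetilde a = a-2x_1$ all agree with the paper. The genuine gap is the middle step, where you claim the residual tails $\alpha_2 e_{11}+\alpha_3 e_{21}+\alpha_5 e_{32}$ of $N$ are forced to vanish by closure brackets with lifts $X_1,X_2,X_3$. Your forcing of $\alpha_2=0$ rests on the assertion that the $\fg_0$-tails of $X_1,X_2$ can be ``standardized'' so that the only $\fg_0$-directions available to absorb $[\alpha_2 e_{11},f_{10}]=-3\alpha_2 e_{01}$ lie in $\langle f_{01}\rangle$. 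That standardization is not available: the residual $P_+$-freedom preserving $N_0$ is only $\exp\langle e_{10},e_{21},e_{31}\rangle$, whose action on $X_1\equiv f_{10}$ removes only tails along $2\sfZ_1-3\sfZ_2$, $e_{11}$, $e_{21}$ (via $[e_{10},f_{10}]=2\sfZ_1-3\sfZ_2$, $[e_{21},f_{10}]=-2e_{11}$, $[e_{31},f_{10}]=e_{21}$); it cannot touch the $\sfH$- or $e_{01}$-tails of $X_1$, nor any $e_{01}$- or $\sfH$-tail of $X_2$, and adding multiples of $N$ only shifts the $f_{01}$-direction. Indeed, in the $\mathsf{M6N}$ model one has $X_1=f_{10}+2\sfH+e_{01}+e_{32}$ and $X_2=f_{11}-\sfH+e_{31}$, so nonzero $\fg_0$-tails in exactly the directions you need to exclude genuinely occur. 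Concretely, writing $[N,X_1]=X_2+c_0N$, the $\fg_0$-component reads $[f_{01},(X_1)_0]-3\alpha_2 e_{01}=(X_2)_0+c_0 f_{01}$, which merely says that $X_2$ acquires an $e_{01}$-tail $-3\alpha_2$; nothing forces $\alpha_2=0$. The same objection applies to $\alpha_3$ (the term $\alpha_3\sfZ_1$ in $[N,X_3]$ can be matched against the $\sfH$-tails of $X_1,X_2$ and against $[f_{01},e_{01}]=\sfZ_1-2\sfZ_2$, all of which have nonzero $\sfZ_1$-components) and to $\alpha_5$.

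What actually kills these tails in the paper's proof is the $\ff^0$-invariance of the line field, which you invoke only afterwards (to get $b=0$). With $N=f_{01}+n_2e_{11}+n_3e_{21}+n_5e_{32}$ and $L\equiv f_{21}+a_1f_{11}+a_2f_{10}\mod\fp$, one computes $[N,L]\equiv 2n_2f_{10}+a_2f_{11}\mod\fp$; since $[N,L]$ has no $f_{21}$-component it cannot contain a multiple of $L$, so invariance forces $[N,L]\in\ff^0\subset\fp$, hence $n_2=a_2=0$ simultaneously. Then the $\sfZ_1$- and $e_{11}$-components of $[N,L]\in\ff^0=\langle N\rangle$, coming from $[e_{21},f_{21}]=\sfZ_1$ and $[e_{32},f_{21}]=e_{11}$, force $n_3=n_5=0$. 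So the line-field computation must be run together with (not after) the normalization of $N$; as written, your argument that $N$ can be taken equal to $f_{01}$ does not go through.
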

 
 \begin{proof}
 Write $\ff^0 \ni N = f_{01} + n_1 e_{10} + n_2 e_{11} + n_3 e_{21} + n_4 e_{31} + n_5 e_{32}$. Since $[e_{11}, N_0] = e_{10}$ and $[e_{32},N_0] = -e_{31}$, then normalize $n_1 = n_4 = 0$ by exponentiating along $\langle e_{11}, e_{32} \rangle$.
  
 For the generator of the line field, write $\ff^{-2} \ni L \equiv f_{21} + a_1 f_{11} + a_2 f_{10} \mod \fp$.  
 Modulo $\fp$, we have $[N,L] \equiv 2 n_2 f_{10} + a_2 f_{11}$, so by $N$-invariance of $\ell$, we must have $n_2 = a_2 = 0$.  Since $[e_{21},f_{21}] = \sfZ_1$ and $[e_{32},f_{21}] = e_{11}$, then evaluation again yields
 \begin{align}
 \ff^0 \ni [N,L] \equiv n_3 \sfZ_1 + n_5 e_{11} \quad\mod \langle f_{01}, \sfH, e_{01}, e_{10}, e_{21}, e_{31}, e_{32} \rangle,
 \end{align}
 so $n_3 = n_5 = 0$.  The final statement follows since $[e_{10}, f_{21}] = -2 f_{11}$.
 \end{proof}

 So, assume $N = f_{01}$.  Next, find generators for $\ff$ and normalize.  Take
 \begin{align}
 \ff^{-1} \ni X_1 &= f_{10} + t_0 f_{01} + t_1 \sfZ_1 + t_2 \sfH + t_3 e_{01} + t_4 e_{10} + t_5 e_{11} + t_6 e_{21} + t_7 e_{31} + t_8 e_{32}.
 \end{align}
 Adding multiples of $N$, we may assume $t_0=0$.  Since
 \begin{align}
 [e_{10}, f_{10}] = 2\sfZ_1 - 3\sfZ_2, \quad 
 [e_{21}, f_{10}] = -2 e_{11}, \quad
 [e_{31}, f_{10}] = e_{21},
 \end{align}
 and $[e_{10},N] = [e_{21},N] = [e_{31},N] = 0$, then normalize \framebox{$t_1 = t_5 = t_6 = 0$} by exponentiating along $\langle e_{10}, e_{21}, e_{31} \rangle$.  Define $X_2 := [N,X_1] - 2 t_2 N \in \ff^{-1}$, so
 \begin{align} 
 X_1 &= f_{10} + t_2 \sfH + t_3 e_{01} + t_4 e_{10} + t_7 e_{31} + t_8 e_{32}, \label{E:nilX1}\\
 X_2 &= f_{11} - t_3 \sfH + t_8 e_{31}. \label{E:nilX2}
 \end{align}
 Next, we iteratively take brackets, and add appropriate linear combinations of higher homogeneity terms to produce $X_3,X_4,X_5 \in \ff$ with $(X_3,X_4,X_5) \equiv (f_{21}, f_{31}, f_{32})$, each modulo $\fp$.  Namely, 
 \begin{align}
  X_3 &:= -\frac{1}{2} [X_1,X_2] - \frac{t_2}{2} X_2 + t_3 X_1 - \frac{3}{2} t_4 N \in \ff^{-2}\\
  &= f_{21} + \frac{3}{2} t_2 t_3 \sfH + \frac{3}{2} t_3 t_4 e_{10} + t_8 e_{21} + \frac{3}{2} t_3 t_7 e_{31} \nonumber\\
  X_4 &:= \frac{1}{3} [X_1,X_3] + \frac{2}{3} t_4 X_2 + \frac{t_2 t_3}{2} X_1 \in \ff^{-3}\\
  &= f_{31} - t_3 \left( \frac{t_2^2}{2} + \frac{t_4}{3} \right) \sfZ_1 + t_3 \left( t_2^2 + \frac{t_4}{6} \right) \sfZ_2 - \frac{t_2 t_3^2}{2} e_{01} + \left( \frac{t_2 t_3 t_4}{2} - \frac{t_7}{3} \right) e_{10} \nonumber\\
  &\qquad + \left( t_8 - \frac{t_3^2 t_4}{2} \right) e_{11} - \frac{t_3 t_7}{2} e_{21} + \left( \frac{t_2 t_3 t_7}{2} - \frac{t_4 t_8}{3} \right) e_{31} + \frac{t_3^2 t_7}{2} e_{32} \nonumber\\
  X_5 &:= -\frac{1}{3} [X_2,X_3] + \frac{t_2 t_3}{2} X_2 + \frac{3}{2} t_3 t_4 N \in \ff^{-3}\\
  &= f_{32} - \frac{t_2 t_3^2}{2} \sfH + \left(t_8 - \frac{t_3^2 t_4}{2}\right) e_{10} - \frac{t_3^2 t_7}{2} e_{31} \nonumber
 \end{align}
 Thus, $\ff = \langle N, X_1, X_2, X_3, X_4, X_5 \rangle$, but there are further constraints on the parameters. 
 
 \begin{lemma} 
 We must have $t_4 = t_7 = 0$ and $t_8 = \frac{(t_2 t_3)^2}{4} \neq 0$.
 \end{lemma}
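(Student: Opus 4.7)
The plan is to enforce closure of $\ff = \langle N, X_1, \ldots, X_5 \rangle$ under the Lie bracket, combined with the maximality clause (X.5). By the construction preceding the lemma, the brackets $[X_1,X_2], [X_1,X_3], [X_2,X_3]$ already lie in $\ff$ (they define $X_3, X_4, X_5$), and a direct check via Table \ref{F:G2br} shows $[N,X_3] = 3 t_2 t_3 N$, $[N,X_4] = -X_5 + t_3(t_2^2 + \tfrac{t_4}{6}) N$, and $[N,X_5] = -t_2 t_3^2 N$, all in $\ff$. The polynomial constraints on $t_2, \ldots, t_8$ must therefore come from the remaining brackets $[X_i, X_4], [X_i, X_5]$ for $i = 1, 2, 3$ and $[X_4, X_5]$.

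The central step is to compute $[X_1, X_5]$ via the commutator table, decompose it into $\fg$-graded components, and match each component against the corresponding graded part of a unique combination $\alpha N + \sum_{i=1}^5 \beta_i X_i$. Matching $\fg_{-3}$ fixes $(\beta_4, \beta_5) = (-t_3, -t_2)$; the $\fg_{\leq -1}$ matchings fix the remaining $\beta_i$; matching $\fg_0$ produces the identity $t_8 = \tfrac{t_2^2 t_3^2}{4} + \tfrac{t_3^2 t_4}{3}$; and matching $\fg_1, \fg_3$ (in particular the $e_{10}, e_{31}, e_{32}$-coefficients) yields the supplementary relations $t_3^3 t_7 = 0$, $t_3(2 t_7 + 3 t_2 t_3 t_4) = 0$, and $\tfrac{t_2 t_3^2 t_7}{2} + \tfrac{t_3 t_4 t_8}{3} = 0$. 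Under the assumption $t_8 \neq 0$ (which, via the quadratic identity, forces $t_3 \neq 0$), these relations successively yield $t_7 = 0$, then $t_2 t_3 t_4 = 0$, then $t_4 t_8 = 0$, and hence $t_4 = 0$ with $t_8 = \tfrac{(t_2 t_3)^2}{4}$. The remaining brackets $[X_2, X_4], [X_i, X_5], [X_3, X_4], [X_3, X_5], [X_4, X_5]$ then close automatically under these relations, as one checks by the same graded-matching procedure.

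The main obstacle is the sheer bookkeeping: each bracket expands into many terms, and the graded matching requires careful tracking of signs and coefficients, though the procedure is entirely algorithmic and readily verifiable by computer algebra. The non-vanishing $t_8 \neq 0$ is a separate maximality argument: if $t_8 = 0$, the polynomial relations above degenerate, and after exhausting the residual $P$-freedom stabilizing $N$ the tails of $X_1, \ldots, X_5$ collapse so that $\ff$ embeds as a proper filtered subalgebra of $\fg_- \oplus \fg_0$. The induced map $\ff^{-2}/\ff^0 \to (\fg_- \oplus \fg_0)^{-2}/\fg_0$ is an isomorphism sending the line field $\ell$ to the unique $\fg_0$-invariant line $\langle f_{21} \rangle \mod \fg_0$, yielding an inclusion into the $\mathsf{M9}$ datum of Table \ref{F:XXO}. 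This contradicts clause (X.5), so $t_8 \neq 0$ as claimed.
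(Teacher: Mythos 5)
There is a genuine gap in your central step. Working out $[X_1,X_5]$ with the bracket table and subtracting the unique combination $\frac{t_2t_3^2}{2}X_1 - t_3X_4 - t_2X_5 + t_7N$ that matches the nonpositive-homogeneity parts, the only nontrivial constraints are: the $\sfZ_1$- and $\sfZ_2$-components give $t_8 = \frac{t_2^2t_3^2}{4} + \frac{t_3^2t_4}{3}$, the $e_{10}$-component gives $t_3(2t_7+3t_2t_3t_4)=0$, and the $e_{31}$-component gives $t_3(3t_2t_3t_7+2t_4t_8)=0$; the $e_{01}$-, $e_{11}$-, $e_{21}$- and $e_{32}$-components cancel identically. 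In particular your relation $t_3^3t_7=0$ does \emph{not} come out of $[X_1,X_5]$, and without it the deduction chain ``$t_7=0$, then $t_2t_3t_4=0$, then $t_4t_8=0$'' never starts. Substituting $t_7=-\tfrac{3}{2}t_2t_3t_4$ (from the second relation, $t_3\neq0$) into the third and using the first leaves $t_4\bigl(\tfrac{2}{3}t_4-4t_2^2\bigr)=0$, so the spurious branch $t_4=6t_2^2$, $t_7=-9t_2^3t_3$, $t_8=\tfrac{9}{4}t_2^2t_3^2$ survives. Your claim that the remaining brackets ``close automatically'' is therefore exactly where the missing content lies: $[X_1,X_4]$ does \emph{not} close on that branch — its $\sfZ_1$- and $\sfZ_2$-components force $t_7=0$ and $t_2t_3t_4=0$ outright — and this is precisely the bracket the paper evaluates (it then extracts $t_3t_4=0$ and $t_8=\frac{(t_2t_3)^2}{4}+\frac{t_3^2t_4}{3}$ from the $e_{10}$- and $e_{01}$-components, giving $t_4=0$ once $t_3\neq0$ is known).

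The nonvanishing statement $t_8\neq 0$ is also under-argued. When $t_8=0$ one must treat two sub-cases separately: $t_3=0$, where after normalizing the line field by the residual $\exp(xe_{10})$-freedom the structure embeds into $\mathsf{M9}$ \emph{or} into $\mathsf{M7}$ (the latter when the surviving $e_{10}$-tail is nonzero, so your claim that the tails always collapse into the $\fg_-\op\fg_0$ datum $\mathsf{M9}$ is not correct); and $t_2=0$ (with $t_3\neq0$), where an embedding into $\mathsf{M9}$ compatible with the line field is only available after the admissibility condition (X.4), $\cT\equiv0$ computed via \eqref{E:XXOtorsion}, forces the line-field parameter $a$ in $L\equiv f_{21}+af_{11}$ to vanish — maximality alone does not do this, because the partial ordering requires the embedding to carry $\ell$ to $\widetilde\ell$. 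So both halves of the lemma need more than what you have written, even though the overall strategy (closure plus maximality) is the right one.
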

 \begin{proof} Let us evaluate the closure condition $[X_1,X_4] \in \ff$.  Modulo $\langle e_{01} \rangle \op \fg_+$, we find:
 \begin{align}
 [X_1,X_4] - t_2 X_4 - t_4 X_3 + t_3 \left( \frac{t_2^2}{2} + \frac{t_4}{3} \right) X_1 &\equiv \left( \frac{5 t_7}{3} + \frac{t_2 t_3 t_4}{2}  \right) \sfZ_1 - \left( t_2 t_3 t_4 + 2 t_7 \right) \sfZ_2.
 \end{align}
 Since $[X_1,X_4] \in \ff^0 = \langle N = f_{01} \rangle$, then $[X_1,X_4] = 0$, so $t_7 = 0 = t_2 t_3 t_4$.  Given this, we have
 \begin{align}
 \begin{split}
  [X_1,X_4] - t_2 X_4 - t_4 X_3 + t_3 \left( \frac{t_2^2}{2} + \frac{t_4}{3} \right) X_1 &\equiv \left(4 t_8 - (t_2 t_3)^2 - \frac{4}{3} t_3^2 t_4 \right) e_{01} -\frac{5}{6} t_3 t_4^2 e_{10} \\
  &\qquad \quad \mod\langle e_{11}, e_{21}, e_{31}, e_{32} \rangle,
  \end{split}
 \end{align}
 so $t_3 t_4 = 0$ and $t_8 = \frac{(t_2 t_3)^2}{4}$.
 
 If $t_3 \neq 0$, then $t_4 = 0$ follows from $t_3 t_4 = 0$.  Assume $t_3 = 0$.  Then $t_8 = 0$ as well, and 
 \begin{align}
 (N,X_1,X_2,X_3,X_4,X_5) = (f_{01}, f_{10} + t_2 \sfH + t_4 e_{10}, f_{11}, f_{21}, f_{31}, f_{32}).
 \end{align}
 From Lemma \ref{L:nil}, we know $\ell = \langle L \rangle \mod \ff^0$, where $L \equiv f_{21} + a f_{11} \mod \fp$.  Apply $p = \exp(x e_{10})$ so that the line field is $\widetilde{\ell} = \langle f_{21} \mod \widetilde\ff^0 \rangle$.  Then $\widetilde\ff = \Ad_p\ff$ is spanned by
 \begin{align}
 (\widetilde{N},\widetilde{X}_1,\widetilde{X}_2,\widetilde{X}_3,\widetilde{X}_4,\widetilde{X}_5) = (f_{01},f_{10} + \widetilde{u} \sfZ_1 + \widetilde{v} \sfH + \widetilde{w} e_{10},f_{11},f_{21},f_{31},f_{32}),
 \end{align}
 where $(\widetilde{u},\widetilde{v},\widetilde{w}) = (\frac{x}{2}, t_2 - \frac{3}{2} x, t_4 + x t_2 - x^2)$.  However, maximality is contradicted from:
 \begin{itemize}
 \item $\widetilde{w} = 0$: the model embeds into $\mathsf{M9}$.
 \item $\widetilde{w} \neq 0$: use $\exp(\langle \sfZ_1 \rangle)$ to normalize $\widetilde{w} = 1$, and then embed into $\mathsf{M7}$.
 \end{itemize}
 
 Next, assume $t_2 = 0$.  Then $\ff$ is spanned by $N = f_{01}$ and 
 \begin{align}
 X_1 = f_{10} + t_3 e_{01}, \quad X_2 = f_{11} - t_3 \sfH, \quad X_3 =  f_{21}, \quad X_4 = f_{31}, \quad X_5 = f_{32}.
 \end{align}
 For $\ell = \langle L \rangle\mod \ff^0$, where $L = X_3 + a X_2$, forcing $\cT \equiv 0$ (via \eqref{E:XXOtorsion}) for the XXO-structure $(\ff,\ff^0;\ell,\cD)$ yields $a=0$.  There is then an inclusion into the $\mathsf{M9}$ model.  Thus, $t_2 \neq 0$.
 \end{proof}
  
 Since $t_2 t_3 \neq 0$, then we normalize both using a $G_0$-rescaling.  From \eqref{E:nilX1} and \eqref{E:nilX2}, we find that $\exp(\langle \sfZ_1 \rangle)$ and $\exp(\langle \sfZ_2 \rangle)$ yield $(t_2,t_3) \mapsto (t_2 \lambda, t_3 \lambda\mu)$, where $\lambda, \mu \in \bbC^\times$.  Let us normalize $(t_2,t_3) = (2,1)$.  Thus, $\ff$ is spanned by $N = f_{01}$, and
 \begin{equation}
 \begin{aligned}
 X_1 &= f_{10} + 2\sfH + e_{01} + e_{32}, \quad
 X_2 = f_{11} - \sfH + e_{31}, \quad
 X_3 = f_{21} + 3\sfH + e_{21}, \label{E:M6N-123}\\
 X_4 &= f_{31} + 2\sfH - e_{01} + e_{11}, \quad
 X_5 = f_{32} - \sfH + e_{10}.
 \end{aligned}
 \end{equation}
 Any $\ff^0$-invariant line field $\ell = \langle L \rangle \mod \ff^0$ has generator of the form 
 \begin{align}
 L = X_3 + a X_2, \quad a \in \bbC.
 \end{align}
 However, using \eqref{E:XXOtorsion} and forcing $\cT \equiv 0$ for the XXO-structure $(\ff,\ff^0;\ell,\cD)$ implies
 \begin{align} \label{E:M6Ntorsion}
 a(a-3)(a-4) = 0.
 \end{align}
 When $a=0$, there is an obvious embedding into the $\mathsf{M8}$ model.  When $a=4$, let $x = 2 e_{10} + e_{21} + e_{31}$ and $\widetilde\ff = \Ad_{\exp(x)} \ff$, which contains 
 \begin{align}
 \begin{array}{l}
 \widetilde{X}_1 := f_{10} + 2(\sfZ_1 - \sfZ_2) + e_{01}  \in \widetilde\ff^{-1}, \\
 \widetilde{X}_2 := f_{11} + \sfZ_1 - 2\sfZ_2 \in \widetilde\ff^{-1},
 \end{array} \quad 
 \begin{array}{l}
 \widetilde{N} := f_{01} \in \widetilde\ff^0,\\
 \widetilde{L} := f_{21} + 2(\sfZ_1 - \sfZ_2)  \in \widetilde\ff^{-2},
 \end{array}
 \end{align}
 where $\widetilde{\ell} = \langle \widetilde{L} \rangle \mod \widetilde\ff^0$, and $\widetilde{X}_4,\widetilde{X}_5$ are induced via brackets. Clearly, the model embeds into $\mathsf{M9}$.
 
 \begin{prop} The $\mathsf{M6N}$ model ($a=3$) does not embed into a model in Table \ref{F:XXO} of larger dimension (or a $P$-translate thereof).
 \end{prop}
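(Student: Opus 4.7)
The plan is to argue by contradiction: suppose there is a filtered Lie-algebra embedding $\iota : \ff_{\mathsf{M6N}} \inj \widetilde\ff$ of the required kind (inducing an isomorphism $\ff^{-2}/\ff^0 \cong \widetilde\ff^{-2}/\widetilde\ff^0$ and sending $\ell$ to $\widetilde\ell$), with $\widetilde\ff$ a $P$-translate of one of the three larger-dimensional models $\mathsf{M9}$, $\mathsf{M8}$, or $\mathsf{M7}_\sfa$. The strategy is to derive a contradiction uniformly across all three candidates by inspecting the $\fg_{-3}$-component of a single bracket identity.

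The first step normalizes $\iota(N)$. The explicit brackets $[N,X_i]$ used above to build $\mathsf{M6N}$ show immediately that $\ad_N$ is nilpotent on $\ff_{\mathsf{M6N}}$, hence $\tgr_0\iota(N) \in \widetilde\fs_0$ acts nilpotently on $\fg_- \subset \tgr(\iota\ff_{\mathsf{M6N}})$. Inspection of the three candidate isotropies ($\fgl_2$, $\fsl_2$, and $\langle \sfZ_2, f_{01}\rangle$) shows that their ad-nilpotent elements are exhausted by multiples of $f_{01}$. Since $\widetilde\ff^1 = 0$ in each case, $\iota(N)$ carries no positive-homogeneity tails, and a $G_0$-normalization gives $\iota(N) = f_{01}$. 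A further normalization using the $G_0$-stabilizer of $f_{01}$ arranges $\tgr(\iota)$ to act as the identity on $\fg_- \oplus \langle f_{01}\rangle$, so each leading part satisfies $\tgr_{-k}\iota(X_i) = f_{\ldots} \in \fg_{-k}$; in particular $\tgr_{-3}\iota(X_5) = f_{32}$.

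The obstruction is visible on the identity $[X_2, X_3] = -3X_5 + 3X_2$, which follows in $\mathsf{M6N}$ from the recursive definition $X_5 = -\tfrac13[X_2,X_3] + X_2$ (obtained by specializing the general formula to $(t_2,t_3,t_4) = (2,1,0)$). Since $\iota(X_2) \in \widetilde\ff^{-1} \subset \fg_{\ge -1}$ and $\iota(X_3) \in \widetilde\ff^{-2} \subset \fg_{\ge -2}$, the only contribution to the $\fg_{-3}$-component of $[\iota(X_2),\iota(X_3)]$ comes from $[\tgr_{-1}\iota(X_2),\,\tgr_{-2}\iota(X_3)] = [f_{11},f_{21}]$, which vanishes by the $G_2$ bracket table. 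However, the $\fg_{-3}$-component of $-3\iota(X_5) + 3\iota(X_2)$ equals $-3\tgr_{-3}\iota(X_5) = -3f_{32} \neq 0$, yielding the desired contradiction.

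The most delicate point, and thus the principal obstacle, is the Step~1 normalization to exactly $\iota(N) = f_{01}$. For $\mathsf{M9}$ all of $G_0$ stabilizes $\widetilde\ff$, and for $\mathsf{M8}$ the Levi $\fsl_2$-subgroup of $G_0$ does, so there is ample freedom to both conjugate the nilpotent part of $\tgr_0\iota(N)$ onto $f_{01}$ and to rescale it. For $\mathsf{M7}_\sfa$ the situation is tighter because $G_0$-rescaling by $\exp(t\sfH)$ replaces $\sfa$ by $\sfa e^{-t}$; but since the statement explicitly allows arbitrary $P$-translates of any $\mathsf{M7}_\sfb$, motion within the $P$-orbit of $\mathsf{M7}$-models poses no obstruction to the normalization. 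Once $\iota(N) = f_{01}$ is secured, the grading-theoretic argument above closes the proof uniformly.
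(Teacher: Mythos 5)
Your contradiction rests on the claim that $[f_{11},f_{21}]=0$, and that claim is false: in the basis of Table \ref{F:G2br} one has $[f_{11},f_{21}]=-3f_{32}$ (the roots $-\alpha_1-\alpha_2$ and $-2\alpha_1-\alpha_2$ sum to the root $-3\alpha_1-2\alpha_2$, so this bracket cannot vanish). Consequently the $\fg_{-3}$-component of $[\iota(X_2),\iota(X_3)]$ is $-3f_{32}$, which agrees exactly with the $\fg_{-3}$-component of $-3\iota(X_5)+3\iota(X_2)$ under your normalization, and no contradiction arises. This failure is not incidental: every admissible model satisfies $\tgr_-(\ff)=\fg_-$, so any bracket identity among generators, inspected only through their leading parts, reduces to an identity in the graded algebra $\fg_-$ that holds universally (indeed, in $\mathsf{M6N}$ itself the relation $[X_2,X_3]=-3X_5+3X_2$ has $\fg_{-3}$-part precisely $[f_{11},f_{21}]=-3f_{32}$). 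Leading-order bookkeeping therefore can never obstruct an embedding; the obstruction has to be sought in the positive-homogeneity tails.

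There is also a structural reason the argument cannot be repaired along these lines: your decisive step never uses the line field. But the underlying filtered Lie algebra of $\mathsf{M6N}$ (the span of $f_{01}$ and the $X_i$ in \eqref{E:M6N-123}, which is the same algebra for $a=0,3,4$) literally is a filtered subalgebra of the $\mathsf{M8}$ algebra --- that is exactly the $a=0$ embedding noted just before the proposition. So any argument concluding that no filtered embedding of $\ff$ exists proves too much; what fails for $a=3$ is the compatibility of the line $\ell$ with $\widetilde\ell$. The paper's proof works precisely there: it normalizes the pair $(N,L)=(f_{01},\,f_{21}+3f_{11}+e_{21}+3e_{31})$ by the $P$-action and shows that the $e_{10}$- and $e_{31}$-coefficients of $L$ cannot both be removed (nor by any residual $G_0$-action), whereas every larger model in Table \ref{F:XXO} has line generator $\equiv f_{21}$ or $f_{21}+e_{21}\mod\ff^0$. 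To salvage your approach you would need to track how $\widetilde\ell$ pulls back under $\iota$ and analyze the tails of its generator, i.e.\ essentially reproduce that normalization argument; as written, the proof does not establish the proposition.
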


 \begin{proof}
 When $a=3$, we have $(N,L) = (f_{01},f_{21} + 3 f_{11} + e_{21} + 3 e_{31})$.  Normalize this via the $P_+$-action.  Letting $x = x_1 e_{10} + x_2 e_{11} + x_3 e_{21} + x_4 e_{31} + x_5 e_{32} \in \fp_+$, and $\widetilde\ff = \Ad_{\exp(x)} \ff$, the condition $\widetilde\ff^0 \subset \fg_0$ forces $x_2 = x_5 = 0$, and the condition $\widetilde{L} \equiv f_{21} \mod \fp$ then forces $x_1 = \frac{3}{2}$.  Then, $\widetilde{N} = f_{01}$ and 
 \begin{align}
 \widetilde{L} \equiv f_{21} + x_3 \sfZ_1 + \left( \frac{9}{4} x_3 - x_4 \right) e_{10} + (1 - x_3^2) e_{21} + \left( 3 x_3 \left( \frac{9}{4} x_3 - x_4\right) - \frac{3}{2} \right) e_{31} \quad\mod \widetilde\ff^0.
 \end{align}
 It is clear that the $e_{10}$ and $e_{31}$ coefficients cannot both be removed via the $P_+$-action, nor any further $G_0$-action.
 From Table \ref{F:XXO}, all models of greater dimension are presented with line field generators of the form $L \equiv f_{21} \mod \ff^0$ or $L \equiv f_{21} + e_{21} \mod \ff^0$, so the conclusion follows.
 \end{proof}

 %%%%%%%%%%%%%%%%%%%%%%%%%%%%%%%%%%%%%%%%%%%%%%%%%%%%%%%%%%%% 

 \subsection{Real forms}
 \label{S:RXXO}
 
%%%%%%%%%%%%%%%%%%%%%%%%%%%%%%%%%%%%%%%%%%%%%%%%%%%%%%%%%%%% 

 Given the classification of complex admissible $(\ff,\ff^0;\ell,\cD)$ from Section \ref{S:CXXO}, we now give the abstract classification of real forms.  Recall that an anti-involution of $\ff$ is a conjugate-linear, bracket-preserving map $\varphi : \ff \to \ff$ such that $\varphi^2 = \id_\ff$.
 
 \begin{defn} For a complex admissible model $(\ff,\ff^0;\ell,\cD)$,
 \begin{enumerate}
 \item an {\sl automorphism} is a map $T \in \operatorname{Aut}(\ff)$ such that $T(\ff^i) = \ff^i$, $\forall i$, and which induces maps that preserve $\ell$ and $\cD$;
 \item a {\sl real form} is an anti-involution $\varphi: \ff \to \ff$ such that $\varphi(\ff^i) = \ff^i$, $\forall i$, and which induces maps that preserve $\ell$ and $\cD$. 
 \end{enumerate}
Two real forms $\varphi,\widetilde\varphi$ are {\sl equivalent} if there exists an automorphism $T$ such that $\widetilde\varphi = T \circ \varphi \circ T^{-1}$.
 \end{defn}
 
 Each anti-involution $\varphi$ yields a real admissible model $(\ff_\bbR,\ff^0_\bbR;\ell_\bbR,\cD_\bbR)$ determined by the (real) fixed point sets, e.g.\ $\ff_\bbR := \ff^\varphi = \{ x \in \ff : \varphi(x) = x \}$.
 
 \begin{example} For the $\mathsf{M8.2}$ model, the anti-involution $\varphi$ given in Table \ref{F:RXXO-AI} yields
 \begin{align}
 \ff_\bbR := \ff^\varphi = \langle (1+i) X_1, (1+i) X_2, i X_3, (1-i) X_4, (1-i) X_5,\sfH, e_{01}, f_{01} \rangle.
 \end{align}
 For the $\mathsf{M6S.3}$ model, the anti-involution $\varphi$ given in Table \ref{F:RXXO-AI} yields
 \begin{align}
 \ff_\bbR := \ff^\varphi = \langle X_1 - X_2, i(X_1 + X_2), i X_3, X_4 - X_5, i(X_4 + X_5),i\sfH \rangle.
 \end{align}
 \end{example}

 Our main result is the classification of real forms up to equivalence:

 \begin{theorem} \label{T:AI}
 The complete classification of real forms of complex admissible models $(\ff,\ff^0;\ell,\cD)$ is given in Tables \ref{F:RXXO} and \ref{F:RXXO-AI}.
 \end{theorem}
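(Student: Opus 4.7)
The plan is to proceed case-by-case through the five complex admissible models $\mathsf{M9}, \mathsf{M8}, \mathsf{M7}_\sfa, \mathsf{M6S}, \mathsf{M6N}$ of Table \ref{F:XXO}. For each complex $(\ff,\ff^0;\ell,\cD)$, I would first describe the group of $\bbC$-linear filtered automorphisms preserving $\ell$ and $\cD$; the relevant part can typically be realized by $\Ad_p|_\ff$ for $p$ in the parabolic $P \subset G_2(\bbC)$ normalizing $\ff$, possibly together with an outer involution. Then I would parametrize compatible anti-involutions $\varphi$ by specifying their action on the generators $X_1,\ldots,X_5$ and the $\ff^0$-generators recorded in Table \ref{F:XXO}, and imposing antilinearity, $\varphi^2 = \id$, bracket preservation, and $\varphi(\ell) = \ell$, $\varphi(\cD) = \cD$. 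Classifying such $\varphi$ up to conjugation by automorphisms gives the rows of Tables \ref{F:RXXO} and \ref{F:RXXO-AI}.

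The key reduction is that, since $\varphi$ preserves the filtration, it induces an anti-involution $\bar\varphi$ on the associated graded $\fs = \tgr(\ff) \subset \fg = \Lie(\mathrm{G}_2(\bbC))$. Fixing a reference anti-involution $\varphi_0$ on $\fs$ (e.g.\ the restriction of the Chevalley conjugation defining the split real form of $\fg_2$), write $\bar\varphi = \varphi_0 \circ \sigma$ with $\sigma$ a $\bbC$-linear involutive graded automorphism of $\fs$ preserving $\ell, \cD$. This reduces the classification of $\bar\varphi$ to that of such $\sigma$ up to conjugation, followed by a lifting check: the deformation map $\fd$ used in Section \ref{S:CXXO} to pass from $\fs$ back to $\ff$ must intertwine with $\varphi$, which constrains the admissible lifts in the filtered direction.

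For the individual cases, the expected pattern is: $\mathsf{M9}$ admits a unique real form (only the split real form of $\fg_2$ preserves the relevant parabolic); $\mathsf{M8}$ admits two, distinguished by whether the induced anti-involution on the $\fsl(3,\bbC)$-factor gives $\fsl(3,\bbR)$ or $\fsu(1,2)$; $\mathsf{M7}_\sfa$ yields a continuous family with invariant $\sfa^2 \in \bbR^\times$ (since anti-involutions act as $\sfa \mapsto \pm\bar{\sfa}$ on the parameter), together with two discrete models $\mathsf{M7}_0^\pm$ at $\sfa = 0$ separated by the sign $\epsilon = \pm 1$; $\mathsf{M6S}$ gives three real forms coming from split-split and compact-compact real structures on $\fsl(2,\bbC) \oplus \fsl(2,\bbC)$; and $\mathsf{M6N}$ admits the unique real form $\mathfrak{aff}(2,\bbR)$.

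The main technical obstacle will be the $\mathsf{M7}_\sfa$ family, where $\sfa$ is a modulus of filtered deformations rather than a Lie-algebra-intrinsic invariant: one must carefully track how normalization of $\sfa$ using the residual $G_0$-action interacts with the anti-involution matrix, and separate the generic $\sfa^2 \in \bbR^\times$ case from the $\sfa = 0$ case where the two sign choices $\epsilon = \pm 1$ yield inequivalent real structures. The $\mathsf{M6S}$ bifurcation of split-split real structures into $\mathsf{M6S.1}$ and $\mathsf{M6S.2}$ (both with Lie algebra $\fsl(2,\bbR) \oplus \fsl(2,\bbR)$) is a secondary subtlety: the two must be separated by an XXO-theoretic invariant, e.g.\ the real $\ff^0$-orbit type of the distinguished line $\ell$ inside $\ff^{-2}/\ff^0$, and one must rule out mixed real forms such as $\fsl(2,\bbR) \oplus \fso(3)$ as incompatible with preserving $\ell$ and $\cD$ simultaneously.
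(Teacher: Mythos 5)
Your first paragraph is, in substance, exactly what the paper does: the authors give no detailed argument for Theorem \ref{T:AI}, declaring it a ``straightforward, but tedious computation'' in which one parametrizes a candidate anti-involution $\varphi$ on the generators of each model in Table \ref{F:XXO}, imposes antilinearity, $\varphi^2=\id$, bracket-, filtration-, $\ell$- and $\cD$-preservation, and then normalizes up to equivalence -- their only stated aids being that $\varphi$ automatically preserves ideals and the derived and lower central series (useful for the solvable models $\mathsf{M7}_\sfa$, $\mathsf{M6N}$ and for $\mathsf{M9}$), and that $\exp(\ff^0)\subset\Aut(\ff)$ supplies automorphisms for the initial normalization of $\varphi$. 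So the case-by-case direct computation you outline coincides with the intended proof, and your expected counts of real forms match Tables \ref{F:RXXO} and \ref{F:RXXO-AI}.

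Your proposed ``key reduction'', however, is not correct as stated and should be repaired before it can replace the direct computation. If $\varphi_0$ is a fixed reference anti-involution (split conjugation) and you write $\bar\varphi=\varphi_0\circ\sigma$, then $\bar\varphi^2=\id$ does \emph{not} force $\sigma^2=\id$; it forces the Galois-cocycle condition $\varphi_0\,\sigma\,\varphi_0=\sigma^{-1}$, so the objects to classify are twisted involutions up to twisted conjugacy (a nonabelian $H^1$), not conjugacy classes of $\bbC$-linear involutive automorphisms. This is visible in the paper's own answer: the anti-involutions listed for $\mathsf{M8.2}$ and $\mathsf{M6S}.2/3$ in Table \ref{F:RXXO-AI} have non-real matrices (e.g.\ $\diag(i,i,-1,-i,-i,1,1,1)$), so the corresponding $\sigma$ is not an involution. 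Two further points to watch: the paper's equivalence is by \emph{abstract} filtered automorphisms of $\ff$ preserving $(\ell,\cD)$, which need not be of the form $\Ad_p|_\ff$ for $p\in P$ (and $\Lie(G_2)$ has no outer automorphisms, so ``possibly together with an outer involution'' has no meaning at the level of $\fg$); and, as you yourself note, for the solvable models the associated graded $\fs=\tgr(\ff)$ carries no trace of the modulus $\sfa$, so the graded/$\varphi_0$-reduction buys essentially nothing there and the real content sits entirely in the filtered lift. With the cocycle condition put in place of involutivity (or by simply running your first-paragraph computation, as the paper does), the plan goes through.
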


 \begin{table}[h]
 \[
 \begin{array}{|c|c|c|c|c|c|} \hline
 \mbox{$\bbC$-model} & \mbox{Restrictions} & \mbox{Real forms} & \mbox{Real labels} & \mbox{Symmetry algebra $\ff \subset G_2$}\\ \hline\hline
 \mathsf{M9} & - & 1 & \mathsf{M9} & \fp_1\\ \hline
 \multirow{2}{*}{$\mathsf{M8}$} &  \multirow{2}{*}{$-$} &  \multirow{2}{*}{$2$} & \mathsf{M8.1} & \fsl(3,\bbR)\\
 &&& \mathsf{M8.2} & \fsu(1,2)\\ \hline
 \multirow{2}{*}{$\mathsf{M7}_\sfa$} & \sfa \in \bbR^\times \cup i\bbR^\times & 1 & \mathsf{M7}_\sfa & \multirow{2}{*}{$\bbR^2 \ltimes \mathfrak{heis}_5$}\\
 & \sfa=0 & 2 & \mathsf{M7}_0^\pm & \\ \hline
 \multirow{3}{*}{$\mathsf{M6S}$} &  \multirow{3}{*}{$-$} &  \multirow{3}{*}{$3$} & \mathsf{M6S}.1 & \fsl(2,\bbR) \times \fsl(2,\bbR)\\
 &&& \mathsf{M6S}.2 & \fsl(2,\bbR) \times \fsl(2,\bbR) \\
 &&& \mathsf{M6S}.3 & \fso(3) \times \fso(3) \\ \hline
 \mathsf{M6N} & - & 1 & \mathsf{M6N} & \mathfrak{aff}(2,\bbR)\\ \hline
 \end{array}
 \]
 \caption{Overview of real forms of complex admissible models $(\ff,\ff^0;\ell,\cD)$}
 \label{F:RXXO}
 \end{table}

 \begin{table}[h]
 \[
 \begin{array}{|c|c|c|c|c|c|} \hline
 \mbox{$\bbC$-model} & \mbox{Basis} & \mbox{$\bbR$-model} & \mbox{Anti-involutions} \\ \hline\hline
 \mathsf{M9} & \begin{array}{c} X_1, X_2, X_3, X_4, X_5,\\ \sfH, e_{01}, f_{01}, \sfZ_1 \end{array} & \mathsf{M9} & \id\\ \hline
 \multirow{2}{*}{$\mathsf{M8}$} & \multirow{2}{*}{$\begin{array}{c} X_1, X_2, X_3, X_4, X_5,\\ \sfH, e_{01}, f_{01} \end{array}$} & \mathsf{M8.1} & \id \\
& & \mathsf{M8.2} & \diag(i,i,-1,-i,-i,1,1,1) \\ \hline
 \mathsf{M7}_\sfa & \begin{array}{c} X_1, X_2, X_3, X_4, X_5,\\ \sfZ_2, f_{01} \end{array} & \begin{array}{l} \mathsf{M7}_{\sfa\neq 0} \\ \mathsf{M7}_{\sfa=0}^\pm \end{array} & \begin{cases}
 \id, & \sfa\in \bbR\\
 \diag(-1,-1,1,-1,-1,1,1), & \sfa\in i\bbR
 \end{cases}\\ \hline
 \mathsf{M6S}.1 &  & \mathsf{M6S}.1 & \id\\
 \mathsf{M6S}.2 & \multirow{2}{*}{$\begin{array}{c} X_1, X_2, X_3, X_4, X_5,\\ \sfH \end{array}$}& \mathsf{M6S}.2 & \diag\left( \begin{pmatrix} 0 & 1 \\ 1 & 0\end{pmatrix}, -1, \begin{pmatrix} 0 & 1 \\ 1 & 0\end{pmatrix}, -1 \right)\\
 \mathsf{M6S}.3 & & \mathsf{M6S}.3 & \diag\left( \begin{pmatrix} 0 & -1 \\ -1 & 0\end{pmatrix}, -1, \begin{pmatrix} 0 & -1 \\ -1 & 0\end{pmatrix}, -1 \right)\\ \hline
 \mathsf{M6N} & \begin{array}{c} X_1, X_2, X_3, X_4, X_5,\\ f_{01} \end{array} & \mathsf{M6N} & \id\\ \hline
 \end{array}
 \]
 \caption{Anti-involutions of complex admissible models $(\ff,\ff^0;\ell,\cD)$}
 \label{F:RXXO-AI}
 \end{table}

The proof of Theorem \ref{T:AI} is a straightforward, but tedious computation, and do not provide details here.  We take advantage of the fact that any anti-involution $\varphi$ of $(\ff,\ff^0;\ell,\cD)$ is bracket-preserving, so ideals, derived \& lower central series are preserved by $\varphi$, in addition to being filtration and structure preserving.  We also note that a source of automorphisms generating equivalences of anti-involutions is $\exp(\ff^0) \subset \Aut(\ff)$, so this can be used for some initial normalization of $\varphi$.
 
 %%%%%%%%%%%%%%%%%%%%%%%%%%%%%%%%%%%%%%%%%%%%%%%%%%%%%%%%%%%% 

 \section{Conformal structures}
 
 %%%%%%%%%%%%%%%%%%%%%%%%%%%%%%%%%%%%%%%%%%%%%%%%%%%%%%%%%%%% 

 For each (real or complex) admissible XXO-structure $(\ff,\ff^0;\ell,\cD)$ in our classification (Table \ref{F:XXO}), quotienting by $\ell$ yields a well-defined $\ff^0$-invariant conformal structure on $\ff/\ff^0$.  Integration yields a local coordinate model for a conformal structure whose conformal symmetry algebra acts multiply-transitively, is isomorphic to $\ff$, and has isotropy subalgebra isomorphic to $\ff^0$ at a generic point.

 In this section, for each of these conformal structures, we:
 \begin{enumerate}
 \item establish the Petrov classification of the (SD \& ASD parts of the) Weyl tensor;
 \item give a local coordinate model;
 \item describe the model Cartan-theoretically;
 \item calculate the conformal holonomy;
 \item assess the existence of an Einstein representative.
 \end{enumerate}
 The Petrov classification is confirmed in three independent manners: Lie-theoretically, in a coordinate XXO-fashion, and Cartan-theoretically.  A fourth (more traditional) method (which we do not discuss) is to directly compute the Weyl tensor of $[\sfg]$, decompose it into SD \& ASD parts, and write each in a spinorial fashion so that their descriptions as binary quartics manifest themselves.

 %%%%%%%%%%%%%%%%%%%%%%%%%%%%%%%%%%%%%%%%%%%%%%%%%%%%%%%%%%%% 

 \subsection{Petrov types}
 
 %%%%%%%%%%%%%%%%%%%%%%%%%%%%%%%%%%%%%%%%%%%%%%%%%%%%%%%%%%%% 
 
  The fundamental tensor for 4D split-conformal structures is the Weyl tensor, which decomposes into SD and ASD parts.  Each can be viewed as a binary quartic, so each has a {\sl Petrov type} corresponding to the root type multiplicities: $\sfO$ (quartic vanishes), $\sfN$ (quadruple root), $\sfD$ (two double roots), $\mathsf{III}$ (one multiplicity 3 root, one simple root), $\mathsf{II}$ (one double root, two other simple roots), $\mathsf{I}$ (four distinct simple roots).  In the real case, one has further refinements, e.g.\ $\sfD^+$ (two real double roots) and $\sfD^-$ (two complex conjugate double roots).  We will write the combined Petrov type as $A.B$, where $A$ and $B$ refer to the SD and ASD Petrov types, respectively.
  
 \begin{theorem} \label{T:Petrov-Lie} The Petrov types for the conformal structures arising from Table \ref{F:XXO} are given below. 
 \[
 \begin{array}{|c|c|c|c|c|} \hline
 \bbC\mbox{-model} & \mbox{Petrov type} & \mbox{Remarks on real forms}\\ \hline\hline
 \mathsf{M9} & \sfN.\sfO & \\
 \mathsf{M8} & \sfD.\sfO & \begin{cases} \mathsf{M8.1}: \sfD^+.\sfO, \,\,\, \ff \cong \fsl(3,\bbR) \\ \mathsf{M8.2}: \sfD^-.\sfO, \,\,\, \ff \cong \fsu(1,2)\end{cases} \\
 \begin{array}{@{}c@{}} 
 \mathsf{M7}_\sfa \\
 \mbox{\tiny $(\sfa \in \bbC)$}
 \end{array} & \begin{cases} \sfN.\sfN, & \sfa^2 \neq \frac{4}{3}\\ \sfN.\sfO, & \sfa^2 = \frac{4}{3} \end{cases} & 
 \begin{tabular}{c}
 One real form $\mathsf{M7_a}$ exists when $\sfa^2 \in \bbR^\times$;\\
 two real forms $\mathsf{M7_0^\pm}$ exist when $\sfa = 0$.
 \end{tabular}\\
 \mathsf{M6S} & \sfD.\sfD & 
 \begin{cases} 
 \mathsf{M6S}.1: \sfD^+.\sfD^+, \,\,\, \ff \cong \fsl(2,\bbR) \times \fsl(2,\bbR)\\ 
 \mathsf{M6S}.2: \sfD^-.\sfD^-, \,\,\, \ff \cong \fsl(2,\bbR) \times \fsl(2,\bbR)\\ 
 \mathsf{M6S}.3: \sfD^-.\sfD^-, \,\,\, \ff \cong \fso(3) \times \fso(3)
 \end{cases} \\
 \mathsf{M6N} & \mathsf{III}.\sfO & \\ \hline
 \end{array}
 \]
 \end{theorem}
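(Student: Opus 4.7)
The strategy rests on the correspondence-space identifications from Section \ref{correspondence}: since the twistor XXO-structure arises as a correspondence space over the conformal geometry, one has $\kappa_H^{\cC M}=\mathrm{pr}\circ\kappa_H^M$, and the remark following Proposition \ref{P:T} gives the correspondences $\cW^+\leftrightarrow \cI$ and $\cW^-\leftrightarrow \cS$. Hence both Petrov types are extractable Lie-theoretically from $(\ff,\ff^0;\ell,\cD)$, and the proof reduces to a case-by-case analysis of the models in Table \ref{F:XXO}.

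For the SD Petrov type I would work entirely inside $\fg=\Lie(G_2)$ and evaluate $\cI$ along the SD twistor fiber through $o$, parametrized by $\exp(tL)\cdot o$ where $L$ spans $\ell$. At parameter $t$, the distribution $\cD$ at the fiber point is spanned by $\Ad_{\exp(tL)}X_1,\Ad_{\exp(tL)}X_2$ modulo the shifted isotropy $\Ad_{\exp(tL)}\ff^0$, and $\ell$ by $L$ modulo the same. A direct bracket computation using Table \ref{F:G2br} expresses $\mathrm{pr}_\ell[\Ad_{\exp(tL)}X_1,\Ad_{\exp(tL)}X_2]$ as a polynomial in $t$ of degree at most four, which is, up to rescaling, the binary quartic $\cW^+(\xi)$. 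Its root structure (including behavior at $t=\infty$) yields the SD Petrov type. For instance, $\mathsf{M9}$ gives a nonzero constant polynomial, hence $\cW^+(\xi)\propto 1$, corresponding to a quadruple root at infinity and thus type $\sfN$; $\mathsf{M8}$ produces a quartic with two distinct double roots, hence type $\sfD$; and so on.

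For the ASD Petrov type the analogous route applies to $\cS$, but a genuine obstacle appears: the $G_2$-embedding $\ff\hookrightarrow\fg$ does not directly encode $\cS$, which lives inside the $\wfg=\mathfrak{sl}(4,\bbR)$-valued Cartan curvature of the XXO-structure. One approach is to produce, for each model, a compatible conformal embedding $\ff\hookrightarrow\wfg$ (carrying $\ell$ into $\wfq\setminus\wfp$), from which $\cS_o$ can be computed by Lie-theoretic bracket comparison and then projected onto the $\mathrm{Sym}^4$-irreducible $\fg_0$-summand of $H_2(\wfp_+,\wfg)$ identified in Table \ref{F:harmonic}. An equivalent, more pedestrian route is to integrate each abstract model to the coordinate metric $\sfg$ of Table \ref{F:main} and apply the standard spinorial Petrov decomposition to $\cW^-$. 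Either way, the case-by-case verification is finite, and the real-form refinements ($\sfD^\pm$ in $\mathsf{M8}$ and $\mathsf{M6S}$; $\sfN.\sfN$ versus $\sfN.\sfO$ at $\sfa^2=4/3$ in the $\mathsf{M7}_\sfa$ family) are extracted by checking whether the roots of each resulting quartic are real or occur in complex-conjugate pairs, consistently with the anti-involutions recorded in Table \ref{F:RXXO-AI}.
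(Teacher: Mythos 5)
Your reduction to Lie-theoretic data is the right idea, but the central SD computation as you describe it fails. Since $L\in\ff$ is a symmetry, the flow $\Phi_t(p)=\exp(tL)\cdot p$ preserves the pair $(\ell,\cD)$, and the frames you place at $\exp(tL)\cdot o$, namely $\Ad_{\exp(tL)}X_1,\Ad_{\exp(tL)}X_2$ for $\cD$ and $L=\Ad_{\exp(tL)}L$ for $\ell$, are exactly the $\Phi_t$-transported frames. By naturality of $\cI$, the coefficient of $L$ in $\mathrm{pr}_{\ell}\big[\Ad_{\exp(tL)}X_1,\Ad_{\exp(tL)}X_2\big]$ is therefore \emph{independent of $t$}: you recover only the value of $\cI$ at $o$, never a quartic, so the root type cannot be read off this way. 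Two further symptoms of the same problem: $\Ad_{\exp(tL)}$ is polynomial in $t$ only when $\ad_L$ is nilpotent (false e.g.\ for $\mathsf{M8}$, where $L=f_{21}+e_{21}$), and the orbit $\exp(tL)\cdot o$ misses precisely the fixed points of the $L$-flow on the fiber, which is where the roots of $\cW^+(\xi)$ sit (for $\mathsf{M8}$, $L$ acts as $3(\xi^2-1)\partial_\xi$ and $\cW^+\propto(\xi^2-1)^2$). What is needed instead (and is what the paper does) is a fiber parametrization independent of the flow: set $\fk=\langle L\rangle+\ff^0$, use the $\fk$-invariant conformal structure on $\ff/\fk$ (Lemma \ref{L:fk}) to parametrize the SD null $2$-planes $\Pi^{\mbox{\tiny SD}}_\xi$, compute the stabilizers $\fk^{\mbox{\tiny SD}}_\xi$, lift to $\cD=\langle \we_1+\xi\we_3+AV,\ \we_4-\xi\we_2+BV\rangle$ via $[\cD,\cD]\subset\cH$, and then convert $\proj_\ell[\cdot,\cdot]$ into a quartic in $\xi$ by expressing $L$ as the vector field it induces on the $\xi$-line (e.g.\ $3(\xi-1)\partial_\xi$ for $\mathsf{M6N}$, so that $-2(\xi-1)^2L$ becomes $\cW^+\propto(\xi-1)^3$, type $\mathsf{III}$). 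Both the coefficient of $L$ and this vector-field factor contribute to the root multiplicities; your scheme captures neither.

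On the ASD side, the obstacle you describe is not actually there: once $\fk$ and the invariant conformal structure on $\ff/\fk$ are in hand, $\cW^-$ is obtained by exactly the same construction applied to the ASD planes $\Pi^{\mbox{\tiny ASD}}_\eta$ (stabilizers $\fk^{\mbox{\tiny ASD}}_\eta$, lifted $\cD$, integrability obstruction $\cI$), with no need for $\cS$ or for constructing an embedding $\ff\hookrightarrow\fsl(4,\bbR)$. Your fallback routes — a Cartan-theoretic $\fsl(4)$ realization or integrating to the coordinate metrics and decomposing the Weyl tensor — are legitimate in principle (the paper uses both as independent confirmations), but you do not carry either out, so as written the ASD half, like the SD half, is not yet a proof. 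The real-form refinements then do follow, as you say, from whether the resulting quartics have real or complex-conjugate roots, but only after the quartics themselves have been correctly produced.
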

  
 We now indicate how Theorem \ref{T:Petrov-Lie} was obtained from Lie-theoretic data in Table \ref{F:XXO}.  For each model there, $\ell = \langle L \rangle \mod \ff^0$ and $\cD = \langle X_1, X_2 \rangle \mod \ff^0$.  Define $\fk := \langle L \rangle + \ff^0$, so that $\ff / \fk$ is 4-dimensional, and carries a $\fk$-invariant conformal structure $[\sfg]$, where $\sfg \in S^2(\ff/\fk)^*$.  Recall that $\fk$-invariance is expressed in matrix form via $ A^\top \sfg + \sfg A = \lambda(A) \sfg, \,\, \forall A \in \fk$.  We easily confirm:
 
 \begin{lemma} \label{L:fk} Define $\we_i \in \ff$ below, and define a basis of $\ff / \fk$ given by $e_i := \we_i \mod \fk$.
 \[
 \begin{array}{|c|cccc|c|} \hline
 \mbox{Model} & \we_1 & \we_2 & \we_3 & \we_4 \\ \hline\hline
 \mathsf{M9} & X_1 & X_5 & X_4 & X_2 \\ 
 \mathsf{M8} & X_1 & X_5 & X_4 & X_2 \\ 
 \mathsf{M7}_\sfa & X_1 & X_5 & X_4 - \frac{1}{3} X_2 & X_2 \\
 \mathsf{M6S} & X_1 & X_5 + \frac{2}{3} X_2 & X_4 & X_2 \\
 \mathsf{M6N} & X_1 & X_5 & X_4 + 2 X_2 & X_2 \\ \hline
 \end{array}
 \] 
 Letting $\{ \theta^i \}$ be the dual basis to $\{ e_i \}$, the unique $\fk$-invariant $[\sfg]$ on $\ff / \fk$ is given by $\sfg = 2(\theta^1 \theta^2 + \theta^3 \theta^4)$.
\end{lemma}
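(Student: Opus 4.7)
The plan is a direct case-by-case verification for each of the five complex models in Table \ref{F:XXO}, using the filtered-deformation data recorded there and the bracket relations of Table \ref{F:G2br}. Linear independence of $\{\widetilde{e}_1,\widetilde{e}_2,\widetilde{e}_3,\widetilde{e}_4\}$ modulo $\fk$ is immediate from filtration considerations: the leading graded parts are $f_{10},f_{11}\in\fg_{-1}$ and $f_{32},f_{31}\in\fg_{-3}$, whereas $\fk=\langle L\rangle+\ff^0\subseteq\fg^{-2}$ has nonzero graded components only in $\fg_{-2}\oplus\fg_0$. Hence the $\widetilde{e}_i$ descend to a basis of $\ff/\fk$.

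Existence of a $\fk$-invariant conformal class on $\ff/\fk$ is automatic: by Proposition \ref{P:T} together with Lemma \ref{lemm-cor}, every admissible XXO-structure arises (locally) as the twistor XXO-structure of a 4D split-conformal structure on $M:=\ff/\fk$, which inherits a canonical $\fk$-invariant conformal class from the correspondence-space construction. To identify this class with $2(\theta^1\theta^2+\theta^3\theta^4)$, I would use that by construction the image in $\ff/\fk$ of $\cD=\langle X_1,X_2\rangle\,\mathrm{mod}\,\ff^0$ is an SD totally null 2-plane at the basepoint; reading off the definitions of the $\widetilde{e}_i$, this image equals $\langle e_1,e_4\rangle$ in every model, and applying the analogous ASD twistor construction pins down $\langle e_1,e_3\rangle$ as an ASD totally null 2-plane. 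Together with the signature-$(2,2)$ requirement, these two null 2-planes force $\sfg$ to take the claimed form up to overall scale. Uniqueness of the $\fk$-invariant conformal class then follows because the conformal structure on $M$ is determined by the XXO-structure via the correspondence-space construction.

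The remaining step, and the main bookkeeping obstacle, is direct verification of $\fk$-invariance: for each generator $A\in\fk$, write out the matrix of $\ad_A$ on $\ff/\fk$ in the $\widetilde{e}_i$-basis using Tables \ref{F:XXO} and \ref{F:G2br}, and check that $A^\top\sfg+\sfg A=\lambda(A)\sfg$ for an appropriate linear character $\lambda:\fk\to\bbC$. The subtlety lies in the filtered-deformation tails of $X_4,X_5$ contributing to brackets with $L$ and with elements of $\ff^0$; the $X_2$-corrections in the definitions of $\widetilde{e}_3$ (for $\mathsf{M7}_\sfa$ and $\mathsf{M6N}$) and $\widetilde{e}_2$ (for $\mathsf{M6S}$) are chosen precisely so that $\ad_L$ on the quotient acquires the same block-antidiagonal shape as in $\mathsf{M9}$ and $\mathsf{M8}$, eliminating the tail contributions that would otherwise spoil the canonical null form. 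Once these shifts are in place, the verification reduces to finitely many routine matrix identities.
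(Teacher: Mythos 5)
Your overall plan for the main content of the lemma---checking $\fk$-invariance of the stated $\sfg$ directly from the isotropy representation in the $\we_i$-basis---is the same direct verification the paper relies on, and your filtration argument for linear independence is essentially sound, though your justification is slightly off: for $\mathsf{M6N}$ the generator $L=X_3+3X_2=f_{21}+3f_{11}+e_{21}+3e_{31}$ has a nonzero $\fg_{-1}$-component, and in $\mathsf{M8}$, $\mathsf{M6S}$, $\mathsf{M6N}$ elements of $\fk$ carry positive-degree tails, so the claim that $\fk$ has graded components only in $\fg_{-2}\oplus\fg_0$ is false as stated. The conclusion survives after a small repair (use the $\fg_{-3}$-components to kill the coefficients of $\we_2,\we_3$, the $\fg_{-2}$-component to kill the $L$-coefficient, and then the $\fg_{-1}$-components against $\ff^0\subset\fg^0$), together with the dimension count $\dim(\ff/\fk)=4$.

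Two steps, however, are genuinely flawed. First, two totally null $2$-planes do not pin down a split-signature conformal class: nullity of $\langle e_1,e_4\rangle$ and $\langle e_1,e_3\rangle$ only forces $\sfg_{11}=\sfg_{13}=\sfg_{14}=\sfg_{33}=\sfg_{44}=0$, leaving $\sfg_{12},\sfg_{22},\sfg_{23},\sfg_{24},\sfg_{34}$ free; for instance $2\theta^1\theta^2+2\theta^3\theta^4+(\theta^2)^2$ has both planes null and signature $(2,2)$ yet is not proportional to the claimed form. Moreover, declaring $\langle e_1,e_3\rangle$ to be ASD-null is circular, since self-duality is defined relative to $[\sfg]$ and the orientation, i.e.\ relative to the very conformal structure you are trying to identify. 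This error is partly harmless only because your last paragraph's direct check of $A^\top\sfg+\sfg A=\lambda(A)\sfg$ would establish invariance of the claimed form anyway. Second, and more seriously, the uniqueness assertion of the lemma is not proved: the fact that a conformal structure is recovered from its SD twistor XXO-structure via the correspondence-space construction does not exclude a second $\fk$-invariant conformal structure on $\ff/\fk$ (its twistor construction would simply yield a different XXO-structure, with no contradiction). Uniqueness here means that the solution space of the linear system $A^\top\sfg+\sfg A=\lambda(A)\sfg$, $A\in\fk$, over all symmetric $\sfg\in S^2(\ff/\fk)^*$ is one-dimensional, and this must be verified model by model with the isotropy representation (it is not automatic, especially for $\mathsf{M6S}$ and $\mathsf{M6N}$ where $\dim\fk=2$); checking that the given $\sfg$ is one solution does not suffice.
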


 Letting $\vol_\sfg = \theta^1 \wedge \theta^2 \wedge \theta^3 \wedge \theta^4$, the two families of totally null 2-planes for $[\sfg]$ are:
 \begin{itemize} 
 \item SD: \,\,$\langle s e_1 + t e_3, s e_4 - t e_2 \rangle$, parametrized by $[s:t] \in \bbP^1$.  Write $\Pi^{\mbox{\tiny SD}}_\xi := \langle e_1 + \xi e_3, e_4 - \xi e_2 \rangle$ and $\Pi^{\mbox{\tiny SD}}_{\xi=\infty} := \langle e_3, e_2 \rangle$.
 \item ASD: $\langle s e_1 + t e_4, s e_3 - t e_2 \rangle$, parametrized by $[s:t] \in \bbP^1$.   Write $\Pi^{\mbox{\tiny ASD}}_\eta := \langle e_1 + \eta e_4, e_3 - \eta e_2 \rangle$ and $\Pi^{\mbox{\tiny ASD}}_{\eta=\infty} := \langle e_4, e_2 \rangle$.
 \end{itemize} 
 Let $\fk^{\mbox{\tiny SD}}_\xi$ and $\fk^{\mbox{\tiny ASD}}_\eta$ be the subalgebras of $\fk$ preserving $\Pi^{\mbox{\tiny SD}}_\xi$ and $\Pi^{\mbox{\tiny ASD}}_\eta$, respectively.

 Fixing $\xi$ and $\eta$, the 5-dimensional quotients $\ff / \fk^{\mbox{\tiny SD}}_\xi$ and $\ff / \fk^{\mbox{\tiny ASD}}_\eta$ are infinitesimal models that carry the associated SD and ASD twistor XXO-structures $(\ell,\cD)$.  Let us describe the XXO-data $(\ell,\cD)$ on them.  Clearly, $\fk / \fk^{\mbox{\tiny SD}}_\xi \subset \ff / \fk^{\mbox{\tiny SD}}_\xi$ and $\fk / \fk^{\mbox{\tiny ASD}}_\eta \subset \ff / \fk^{\mbox{\tiny ASD}}_\eta$ generate distinguished lines $\ell$, and choosing a generator for $\ell$, we can express it as a vector field on $\xi$-space or $\eta$-space, respectively.  In the SD case, the 3-plane $\cH = \ell \oplus \cD$ is the pullback under the quotient $\ff / \fk^{\mbox{\tiny SD}}_\xi \to \ff / \fk$ of $\Pi^{\mbox{\tiny SD}}_\xi$, and similarly for the ASD case.  Inside $\cH$, the twistor 2-plane lift $\cD$ is uniquely determined by $[\cD,\cD] \subset \cH$.  Explicitly,
 \begin{itemize}
 \item SD:  $\cD = \langle \we_1 + \xi \we_3 + A V, \we_4 - \xi \we_2 + BV\rangle \mod \fk^{\mbox{\tiny SD}}_\xi$, where $0 \neq V \in \fk / \fk^{\mbox{\tiny SD}}_\xi$;
 \item ASD:  $\cD = \langle \we_1 + \eta \we_4 + A V, \we_3 - \eta \we_2 + BV\rangle \mod \fk^{\mbox{\tiny ASD}}_\eta$, where $0 \neq V \in \fk / \fk^{\mbox{\tiny ASD}}_\eta$.
 \end{itemize}
  Then $\cW^{\pm}$ are determined (up to scale) by the relative invariant $\cI : \bigwedge^2 \cD \to \ell$, $(x,y) \mapsto \proj_\ell([x,y])$. 
  
 \begin{example}[$\mathsf{M6N}$]  For $\mathsf{M6N}$, define $X_1,...,X_5,L$ as in Table \ref{F:XXO}, and $\we_i,e_i,\theta^i$ as in Lemma \ref{L:fk}.  We have $\ff^0 = \langle f_{01} \rangle$ and $\fk = \langle L, f_{01} \rangle$. The isotropy rep $\rho: \fk \to \fgl(\ff / \fk)$ in the $\{ e_1, e_2, e_3, e_4 \}$ basis is:
 \begin{align}
 \rho(c_1 L + c_2 f_{01}) = \begin{pmatrix} 
 -3 c_1 & 0 & 0 & 0\\
 0 & 0 & 6 c_1 - c_2 & 3 c_1\\
 -3 c_1 & 0 & 0 & 0\\
 -6 c_1 + c_2 & 0 & 0 & -3 c_1
 \end{pmatrix}. 
 \end{align}
 The unique $\fk$-invariant conformal structure $[\sfg]$ on $\ff / \fk$ is given by
 $g = 2\theta^1 \theta^2 + 2\theta^3 \theta^4 \in S^2(\ff / \fk)^*$.

 \underline{SD case}: $c_1 L + c_2 f_{01} \in \fk$ acts on $\Pi^{\mbox{\tiny SD}}_\xi$ via the infinitesimal change $\xi \mapsto 3c_1 (\xi-1)$, i.e.\ $3c_1(\xi-1) \partial_\xi$:
 \begin{align}
 e_1 + \xi e_3 \mapsto e^{t\rho(c_1 L + c_2 f_{01})} (e_1 + \xi e_3) &= e_1 + \xi e_3 + t \rho(c_1 L + c_2 f_{01}) + O(t^2)\\
 &\equiv (1- 3 c_1 t) e_1 + (\xi - 3 c_1 t) e_3 + O(t^2) \,\,\mod \{ e_2, e_4 \}
 \end{align}
 Thus, $\xi \mapsto \frac{\xi - 3 c_1 t}{1- 3 c_1 t} + O(t^2)$.  Differentiating at $t=0$ yields $\xi \mapsto 3c_1 (\xi-1)$.  So $\fk^{\mbox{\tiny SD}}_\xi = \langle f_{01} \rangle = \ff^0$, and $L$ quotients to a generator of $\ell = \fk / \fk^{\mbox{\tiny SD}}_\xi$, acting via $3(\xi-1)\partial_\xi$.  The condition 
 \begin{align}
 v := [\we_1 + \xi \we_3 + A L, \we_4 - \xi \we_2 + BL] \in \langle \we_1 + \xi \we_3, \we_4 - \xi \we_2, L, f_{01} \rangle
 \end{align}
 uniquely determines the twistor distribution $\cD = \langle \we_1 + \xi \we_3 + A L, \we_4 - \xi \we_2 + BL \rangle \mod \ff^0$, and pins down $(A,B) = (-2\xi,0)$.  The components of $v$ in the basis
 \begin{align}
 \we_1 + \xi \we_3 - 2\xi L, \quad \we_4 - \xi \we_2, \quad L, \quad f_{01}
 \end{align}
 are $(-2\xi + 2, 4, -2(\xi-1)^2, 16\xi^2)$.  The projection $\bigwedge^2 \cD \to \ell$ has image a multiple of $-2(\xi-1)^2 L$.  But we saw that $L$ acts via $3(\xi-1)\partial_\xi$, so $\cW^+$ is a multiple of $(\xi-1)^3$.  Viewing this as a quartic on $\xi \in \bbP^1$, it is Petrov type $\mathsf{III}$, with a triple root at $\xi = 1$ and a simple root at $\xi = \infty$.  At these exceptional values, we get integrable 2-planes $\langle \we_1+\we_3,\we_4 - \we_2 \rangle \mod \fk^{\mbox{\tiny SD}}_{\xi=1}$ and $\langle \we_3,\we_2 \rangle \mod  \fk^{\mbox{\tiny SD}}_{\xi=\infty}$.
 
 \underline{ASD case}: $c_1 L + c_2 f_{01} \in \fk$ acts on $\Pi^{\mbox{\tiny ASD}}_\eta$ via $(c_2-6c_1) \partial_\eta$, so $\fk^{\mbox{\tiny ASD}}_\eta = \langle L+6 f_{01} \rangle$, and $L$ quotients to a generator of $\ell = \fk / \fk^{\mbox{\tiny ASD}}_\eta$, acting via $-6\partial_\eta$.  We now proceed similarly as above.
 \end{example}

 Data associated to all our (complex) models is compiled below.
 \[
 \begin{array}{|c|c|c|c|c|c|c|c|} \hline
 \mbox{Model} & \fk^{\mbox{\tiny SD}}_\xi & \multicolumn{2}{c|}{\fk / \fk^{\mbox{\tiny SD}}_\xi} & A & B & \cW^+ & \mbox{Type} \\ \hline\hline
 \mathsf{M9} & -2 \xi L+3\sfZ_1,\,e_{01},\,\sfH,\, f_{01} & L & -3 \partial_\xi & 0 & 0 & & \sfN\\
 \mathsf{M8} & e_{01},\, \sfH, \,f_{01} & L & 3(\xi^2-1) \partial_\xi & 0 & 0 & 6(\xi^2-1)^2 & \sfD\\
 \mathsf{M7}_\sfa & \xi L-3 \sfZ_2,\, f_{01} & L & -3 \partial_\xi & -\frac{2\sfa\xi}{3} & 0 & 6 & \sfN\\
 \mathsf{M6S} & \sfH & L & (\xi + 3)(\xi - 1)\partial_\xi & 0 & 0 & \frac{2}{3} (\xi + 3)^2(\xi - 1)^2 & \sfD\\
 \mathsf{M6N} & f_{01} & L & 3(\xi-1) \partial_\xi & -2\xi & 0 & -6(\xi-1)^3 & \mathsf{III}\\ \hline
 \end{array}
 \]
 
  \[
 \begin{array}{|c|c|c|c|c|c|c|c|} \hline
 \mbox{Model} & \fk^{\mbox{\tiny ASD}}_\eta & \multicolumn{2}{c|}{\fk / \fk^{\mbox{\tiny ASD}}_\eta} & A & B & \cW^- & \mbox{Type}\\ \hline\hline
 \mathsf{M9} & L, \,\sfZ_1,\, 2 e_{01}-\eta\sfH,\,\sfH+2\eta f_{01} & f_{01} & \partial_\eta & 0 & 0 & 0 & \sfO\\
 \mathsf{M8} & L, \,\,\sfH+2\eta f_{01},\,\, \eta\sfH - 2e_{01} & f_{01} & \partial_\eta & 0 & 0 & 0 & \sfO\\
 \mathsf{M7}_\sfa & L-f_{01}, \sfZ_2+\eta f_{01} & f_{01} & \partial_\eta & 0 & -\frac{2\sfa}{3} & -2(\sfa^2 - \frac{4}{3}) & \sfN \mbox{ or } \sfO\\
 \mathsf{M6S} & L+\sfH & L & 2\eta\partial_\eta & 0 & 0 & \frac{32}{3} \eta^2 & \sfD\\
 \mathsf{M6N} & L + 6 f_{01} & L & -6\partial_\eta & \frac{\eta(\eta-4)}{6} & -\frac{\eta^2}{6} - \frac{2}{3} & 0 & \sfO\\ \hline
 \end{array}
 \]

Now consider Petrov types for real forms (Table \ref{F:RXXO-AI}).  The types are necessarily the same for $\mathsf{M9}$ ($\sfN.\sfO$), $\mathsf{M7}_\sfa$ ($\sfN.\sfN$ or $\sfN.\sfO$), and $\mathsf{M6N}$ ($\mathsf{III}.\sfO$).  For $\mathsf{M8.1}$ and $\mathsf{M6S}.1$, computations are the same as in the complex case, so we get real roots, and the claimed types: $\sfD^+.\sfO$ and $\sfD^+.\sfD^+$ respectively.  Below are details for $\mathsf{M8.2}$ ($\sfD^-.\sfO$) and $\mathsf{M6S}.2 \& 3$ ($\sfD^-.\sfD^-$).
 
    \[
 \begin{array}{|c|c|c|c|c|c|} \hline
 \mbox{Model} & \we_1 & \we_2 & \we_3 & \we_4 & L\\ \hline\hline
 \mathsf{M8.2} & (1+i)X_1 & (1-i)X_5 & (1-i)X_4 & (1+i)X_2 & iX_3\\ 
 \mathsf{M6S}.2 & X_1 + X_2 & X_4 + X_5 + \frac{1}{3} (X_1 + X_2) & i(X_1-X_2) & i(X_4 - X_5) + \frac{1}{3} i(X_1 - X_2) & iX_3\\ 
 \mathsf{M6S}.3 & X_1 - X_2 & X_4 - X_5 + \frac{1}{3} (X_1 - X_2) & i(X_1+X_2) & i(X_4 + X_5) + \frac{1}{3} i(X_1 + X_2) & iX_3\\ \hline
 \end{array}
 \]

 \[
 \begin{array}{|c|c|c|c|c|c|c|c|} \hline
 \mbox{Model} & \fk^{\mbox{\tiny SD}}_\xi & \multicolumn{2}{c|}{\fk / \fk^{\mbox{\tiny SD}}_\xi} & A & B & \cW^+ & \mbox{Type} \\ \hline\hline
 \mathsf{M8.2} & e_{01},\, \sfH, \,f_{01} & L & 3(\xi^2+1) \partial_\xi & 0 & 0 & -12(\xi^2+1)^2 & \sfD^-\\
 \mathsf{M6S}.2 & i\sfH & L & -(\xi^2+1)\partial_\xi & 0 & 0 & -\frac{10}{3}(\xi^2 + 1)^2 & \sfD^-\\
 \mathsf{M6S}.3 & i\sfH & L & -(\xi^2+1)\partial_\xi & 0 & 0 & +\frac{10}{3} (\xi^2 + 1)^2 & \sfD^-\\ \hline
 \end{array}
 \]

 \[
 \begin{array}{|c|c|c|c|c|c|c|c|} \hline
 \mbox{Model} & \fk^{\mbox{\tiny ASD}}_\eta & \multicolumn{2}{c|}{\fk / \fk^{\mbox{\tiny ASD}}_\eta} & A & B & \cW^- & \mbox{Type}\\ \hline\hline
 \mathsf{M8.2} & iX_3, \,\,\sfH+2\eta f_{01},\,\, \eta\sfH - 2e_{01} & f_{01} & \partial_\eta & 0 & 0 & 0 & \sfO\\
 \mathsf{M6S}.2 & i\sfH & L & -\frac{4\eta^2 + 9}{3} \partial_\eta & 0 & 0 & -\frac{64}{27} (\eta^2 + \frac{9}{4})^2 & \sfD^-\\
 \mathsf{M6S}.3 & i\sfH & L & -\frac{4\eta^2 + 9}{3} \partial_\eta & 0 & 0 & +\frac{64}{27} (\eta^2 + \frac{9}{4})^2 & \sfD^-\\ \hline
 \end{array}
 \]

 %%%%%%%%%%%%%%%%%%%%%%%%%%%%%%%%%%%%%%%%%%%%%%%%%%%%%%%%%%%% 

\subsection{Coordinate models}
We will present local coordinate models of the conformal structures and the corresponding XXO-structures appearing in our classification. In the $\mathsf{M9}$, $\mathsf{M8}$ and $\mathsf{M6S}$ cases there were known metrics realizing the abstract models. The $\mathsf{M7_a}$   and $\mathsf{M6N}$ models are new and the coordinate models have been obtained via integration of the structure equations of the  XXO-structures. Null coframes $(\theta^1,\theta^2,\theta^3,\theta^4)$  in which the representative metric has the form $\sfg=2(\theta^1\theta^2+\theta^3\theta^4)$ are summarized in Table \ref{table-coframe}.
% classified in Theorem \ref{}. 

\begin{table}[h]\label{table-coframe}
 \[
 \begin{array}{|l|l|}  \hline
 \mathsf{M9} & 
 \begin{array}{l@{\,}l}
 \theta^1=\mathsf{d}x,\quad \theta^2=\tfrac12 \mathsf{d}u,\quad  \theta^3=\mathsf{d}y+x^2\mathsf{d}v,\quad \theta^4=\tfrac12\mathsf{d}v

 \end{array} 
  \\ \hline
 %%%%%%%%%%
 \mathsf{M8.1} &
 \begin{array}{l@{\,}l}
  \theta^1=\tfrac{1}{v+ux-y}\mathsf{d}x,\quad \theta^2=\tfrac{y-v}{v+ux-y}\mathsf{d}u+\tfrac{u}{v+ux-y}\mathsf{d}v,\\
  \theta^3=\tfrac{x}{v+ux-y}\mathsf{d}u+\tfrac{1}{v+ux-y}\mathsf{d}v,\quad \theta^4=\tfrac{-1}{v+ux-y}\mathsf{d}y
 \end{array}  \\ \hline
% %%%%%%%%%%
\mathsf{M8.2} &
 \begin{array}{l@{\,}l}
  \theta^1=\tfrac{\sqrt{x^2+y^2+2u}+x}{x^2+y^2+2u}\mathsf{d}x+\tfrac{y}{x^2+y^2+2u}\mathsf{d}y+\tfrac{1}{x^2+y^2+2u}\mathsf{d}u\\
 \theta^{2}=-\tfrac{\sqrt{x^2+y^2+2u}-x}{x^2+y^2+2u}\mathsf{d}x+\tfrac{y}{x^2+y^2+2u}\mathsf{d}y+\tfrac{1}{x^2+y^2+2u}\mathsf{d}u\\
\theta^3=-\tfrac{y}{x^2+y^2+2u}\mathsf{d}x+\tfrac{\sqrt{x^2+y^2+2u}+x}{x^2+y^2+2u}\mathsf{d}y+\tfrac{1}{x^2+y^2+2u}\mathsf{d}v\\
 \theta^4=-\tfrac{y}{x^2+y^2+2u}\mathsf{d}x-\tfrac{\sqrt{x^2+y^2+2u}-x}{x^2+y^2+2u}\mathsf{d}y+\tfrac{1}{x^2+y^2+2u}\mathsf{d}v
 \end{array}  \\ \hline
% %%%%%%%%%%
 \begin{array}{@{}l@{}} \mathsf{M7_a} \\ \mbox{\tiny $(\sfa^2 \in \bbR^\times)$}\\
 \mathsf{M7_0^\pm}
 \end{array} &
 \begin{array}{l@{\,}l} \theta^1=\tfrac{9}{2}(2r^2\epsilon+4r x+y^2-1)\mathsf{d}u+6\epsilon (r y + 2x)\mathsf{d}v-6\epsilon \mathsf{d}x,\quad \theta^2=\mathsf{d}u,\\ \theta^3=(6 r^2 - 3y - 5\epsilon)\mathsf{d}v+3\mathsf{d}y,\quad \theta^{4}=\mathsf{d}v, \\
  \mbox{where}\quad  r=\vert \sfa\vert\geq 0,\quad \epsilon=\begin{cases}
\mathrm{sgn}(\sfa^2), & \sfa \neq 0;\\
 \pm 1, & \sfa = 0
\end{cases}
%  r\geq 0,\quad\epsilon=\pm 1 
 \end{array} 
   \\ \hline
 %%%%%%%%%%
 \begin{array}{@{}c} \mathsf{M6S}.1\\ \mathsf{M6S}.2\\ \mathsf{M6S}.3\end{array} & \begin{array}{@{}l@{\,}l}\begin{array}{l@{\,}l}
 \theta^1=\frac{1}{\kappa(x^2+\epsilon y^2)+1}\mathsf{d}x+ \frac{1}{9\kappa (u^2+\epsilon v^2)+1}\mathsf{d}u\\
 \theta^2=\frac{1}{\kappa (x^2+\epsilon y^2)+1}\mathsf{d}x-\frac{1}{9\kappa (u^2+\epsilon v^2)+1}\mathsf{d}u\\
 \theta^3=\frac{\epsilon}{\kappa(x^2+\epsilon y^2)+1}\mathsf{d}y+\frac{1}{9\kappa (u^2+\epsilon v^2)+1}\mathsf{d}v\\
 \theta^4=\frac{1}{\kappa (x^2+\epsilon y^2)+1}\mathsf{d}y-\frac{\epsilon}{9\kappa (u^2+\epsilon v^2)+1}\mathsf{d}v\end{array}&\quad\mbox{where}\quad
 \begin{array}{c|ccc} & \mathsf{M6S}.1 & \mathsf{M6S}.2 & \mathsf{M6S}.3\\ \hline (\kappa,\epsilon) & (1,-1) & (-1,1) & (1,1) \end{array}
 \end{array} \\
 
\hline
 %%%%%%%%%%
 \mathsf{M6N} & 
 \begin{array}{l@{\,}l} \theta^1=\mathsf{d}v,\quad \theta^2=\mathsf{d}x,\quad \theta^3=2e^{2v}\mathsf{d}x-2e^{2v}u\mathsf{d}y+\mathsf{d}u-u\mathsf{d}v, \quad \theta^4=\mathsf{d}y \end{array}   \\ \hline
 \end{array}
 \]
 \caption{Here $(\theta^1,\theta^2,\theta^3,\theta^4)$ is a positively oriented coframe in $(x,y,u,v)$-space  such that a representative metric is of the form $\sfg=2(\theta^1\theta^2+\theta^3\theta^4)$.}
 \end{table}

 The  $\mathsf{M9}$ model is represented by the split signature Ricci-flat pp-wave metric $\sfg=\mathsf{d}x\mathsf{d}u+\mathsf{d}y\mathsf{d}v+x^2\mathsf{d}v^2$. The $\mathsf{M8}$ model is realized by the $\mathrm{SL}(3,\mathbb{C})$-invariant holomorphic metric
 \begin{align}
% \begin{array}{c}
\sfg= \frac{z^2\mathsf{d}z^1\mathsf{d}w^2-(w^1+w^2)\mathsf{d}z^1\mathsf{d}z^2+z^1\mathsf{d}w^1\mathsf{d}z^2+\mathsf{d}w^1\mathsf{d}w^2}{(w^1+w^2+z^1z^2)^2},
% \end{array}
 \end{align}
from which we obtain the following representative metrics of the real forms $\mathsf{M8.1}$ and $\mathsf{M8.2}$:
 \begin{itemize}
 \item  $z^1=u$, $z^2=x$, $w^2=-y$, $w^1=v$  gives a metric realizing the $\mathsf{M8.1}$ model, which is a para-K\"ahler-Einstein  metric on the symmetric space $\mathrm{SL}(3,\bbR)/\mathrm{GL}(2,\bbR)$ and was studied geometrically under the name {\sl dancing metric} in \cite{BLN2018}, and
 \item  $z^1=x+i y$, $z^2=x-i y$, $w^2=u-i v$, $w^1=u+i v$ gives a metric realizing the  $\mathsf{M8.2}$ model, which is a pseudo-K\"ahler-Einstein metric on the symmetric space $\mathrm{SU}(1,2)/\mathrm{GL}(2,\bbR)$.
 \end{itemize}
%Since for these metrics the Lie algebra of conformal symmetries is greater than $7$ (the submaximal symmetry dimension for $(2,3,5)$ distributions), the associated twistor distributions are necessarily maximally symmetric.
 The real forms of the $\mathsf{M6S}$ model can all be realized  via variants of the rolling construction. The conformal structures can  be represented by  products of the form $(\Sigma_1\times \Sigma_2,[\sfg_1\times -\sfg_2])$, where $(\Sigma_1,\sfg_1)$ and $(\Sigma_2,\sfg_2)$ are $2$-dimensional constant curvature spaces of the same signature whose curvatures have ratio $9:1$. In the $\mathsf{M6S.3}$ case, these are two spheres, in the  $\mathsf{M6S.2}$ case  two hyperbolic spaces, and in the $\mathsf{M6S.1}$ case these are two de Sitter spaces of signature $(1,1)$.

%The $\mathsf{M7a}$   and $\mathsf{M6N}$ models are new and the coordinate models have been obtained via integration of the structure equations of the  XXO-structures. 

Representative metrics $\sfg$  for the $\mathsf{M7_a}$ models are given in Table \ref{table-coframe}. Real forms exist for $\sfa\in\bbR\cup i\bbR$, but since $\pm \sfa$ yield the same complex model, it suffices to restrict to $\sfa\in\bbR_{\geq 0}\cup i\bbR_{\geq 0}$.  This is equivalently encoded by the pair $(r,\epsilon)$, where $r=\vert \sfa\vert \geq 0$ and $\epsilon=\mathrm{sgn}(\sfa^2)$ when $\sfa \neq 0$, while $\epsilon = \pm 1$ when $\sfa = 0$.  Any two metrics corresponding to $(r,\epsilon)$ and $(r',\epsilon')$ such that $(r,\epsilon)\neq (r',\epsilon')$ are conformally inequivalent. The metrics are of Petrov type $\sfN.\sfN$ {\em except} if $\sfa^2 = \frac{4}{3}$ (or $(r,\epsilon) = (\frac{2}{\sqrt{3}}, 1)$), in which  case the Petrov type is $\sfN.\sfO$. In fact, in this special case one can choose coordinates $(X,Y,U,V)$ such that the metric is conformal to the pp-wave metric
 \begin{align} \label{E:N7-HF}
 \mathsf{d}X\mathsf{d}U+\mathsf{d}Y\mathsf{d}V-\tfrac{3}{2}X^{1/2}\mathsf{d}V^2.
 \end{align}
All $\mathsf{M7_a}$ models are conformally Ricci-flat. The Ricci-flat representative metric $\sfg_E$ is related to $\sfg$ via
 \begin{align} \label{E:M7-Einstein}
 \sfg_E= \  \frac{\mathrm{sech}^2(\frac{u}{2}\sqrt{20\epsilon+r^2})}{\mathrm{exp}(3r\epsilon+2v)}\ \sfg.
 \end{align}
It would be nice to have realizations of the $\mathsf{M7_a}$ and $\mathsf{M6N}$ models in terms of non-holonomic systems.

Given a metric $\sfg=2(\theta^1\theta^2+\theta^3\theta^4)$ expressed in a null coframe, the associated SD twistor distribution  is given by the common kernel of the $1$-forms $\omega^1=\theta^2+\xi\theta^4$ and $\omega^2=\theta^3-\xi\theta^1$ as in \eqref{om12} and $\omega^3$, which is uniquely determined by the equations \eqref{DfromH}. By Lemma \ref{lemm-Weyl}, the Petrov type of the SD Weyl tensor can be alternatively obtained by calculating $\mathsf{d}\omega^3\wedge\omega^1\wedge\omega^2\wedge\omega^3$, see \eqref{W(xi)}, and analyzing the root type of $\mathcal{W}^+(\xi)$, as illustrated in the following example. (The ASD twistor distribution and the Petrov type of the ASD Weyl tensor can be obtained in an analogous manner.) 
\begin{example}
Let us consider the $\mathsf{M6N}$ model.  Here, a null coframe is given by 
 \begin{align}
 \begin{array}{l@{\,}l} 
 \theta^1=\mathsf{d}x,\quad \theta^2=\mathsf{d}v,\quad \theta^3=2e^{2v}\mathsf{d}x-2e^{2v}u\mathsf{d}y+\mathsf{d}u-u\mathsf{d}v, \quad \theta^4=\mathsf{d}y.
 \end{array}
 \end{align} 
Then the twistor distribution is of the form $\cD =\mathsf{ker}(\omega^1,\omega^2,\omega^3)$, where
 \begin{align}
 \omega^1 &= 2e^{2v}\xi\mathsf{d}x-2e^{2v}\xi u\mathsf{d}y+\xi\mathsf{d}u-(\xi u-1)\mathsf{d}v,\quad
 \omega^2 = \mathsf{d}y-\xi\mathsf{d}x,\\
 \omega^3 &= \mathsf{d}\xi-4e^{2v}\xi^2(\xi u-1)\mathsf{d}x.
 \end{align}
One confirms that the SD Weyl tensor $\cW^+$ has Petrov type $\mathsf{III}$ from
 \begin{align}
 \mathsf{d}\omega^3\wedge\omega^1\wedge\omega^2\wedge\omega^3=-12\xi^3 e^{2v}(\xi u-1)\mathsf{d}x\wedge\mathsf{d}y\wedge\mathsf{d}u\wedge\mathsf{d}v\wedge\mathsf{d}\xi.
 \end{align}
One can also directly compute that the symmetries of the conformal structure are given by
 \begin{align}
 &E= -xy\partial_{x}-y^2\partial_{y}+(yu-x)\partial_{u}+y\partial_{v},\quad F=\partial_{y},\quad
H=x\partial_{x}+2y\partial_{y}-u\partial_{u}-\partial_{v},\\\
 & R=-x\partial_{x}+u\partial_{u},\quad
  S=y\partial_{x}+\partial_{u},\quad
  T=\partial_{x}
 \end{align}
 with non-trivial Lie brackets   
 \begin{align}
  &[E,F]= H,\quad [H,E]=2E,\quad
   [H,F]=-2F,\quad  [H,S]=S,\quad
 [H,T]=-T,\\&  [E,T]=S,\quad
 [F,S]=T,\quad [R,S]=S,\quad
[R,T]=T, 
 \end{align}
 which confirms that the symmetry algebra is
$\mathfrak{aff}(2) \cong \fgl(2,\bbR) \ltimes \bbR^2$. 
\end{example}

 \subsection{Half-flatness and pairs of 2nd order ODE}
  
 %%%%%%%%%%%%%%%%%%%%%%%%%%%%%%%%%%%%%%%%%%%%%%%%%%%%%%%%%%%% 
  
 Suppose that $(M,[\sfg])$ is SD, i.e.\ $\cW^- = 0$.  This is equivalent to integrability of the ASD twistor XXO-structure $(\ell,\cD)$ on $\bbT^-(M)$.  As described in Section \ref{S:XXO}, such $(\ell,\cD)$ can be encoded by a 2nd order ODE pair \eqref{E:2ODE}, or equivalently as \eqref{E:2ODE-LD}.

 \begin{example}[$\mathsf{M7}_\sfa$, $\sfa^2 = \frac{4}{3}$]  On $(x,y,u,v)$-space $M$, consider the half-flat conformal structure $[\sfg]$ of Petrov type $\sfN.\sfO$ given by $\sfg = 2(\theta^1 \theta^2 + \theta^3 \theta^4)$ from \eqref{E:N7-HF}, where
 \begin{align}
 \theta^1 = dx, \quad \theta^2 = du, \quad \theta^3 = dy - \frac{3}{2} x^{1/2} dv, \quad \theta^4 = dv.
 \end{align}
 Let $e_1,e_2,e_3,e_4$ be the dual framing, and an ASD totally null 2-plane is given by $\langle e_4 - \xi e_1, e_2 + \xi e_3 \rangle$.  Lifting to $N = \bbT^-(M)$, we get an integrable XXO-structure $(\ell,\cD)$ given by
 \begin{align}
 \ell = \langle \partial_\xi \rangle, \quad \cD = \left\langle \partial_u + \xi \partial_y, \partial_v - \xi \partial_x + \frac{3}{2} \sqrt{x} \,\partial_y \right\rangle.
 \end{align}
 Invariants for $\cD$ can be easily found using {\tt Maple}:
 \begin{verbatim}
 restart: with(DifferentialGeometry): with(GroupActions):
 DGsetup([x,y,u,v,xi],N):
 dist:=evalDG([D_u+xi*D_y,D_v-xi*D_x+3/2*sqrt(x)*D_y]):
 InvariantGeometricObjectFields(dist,[1],output="list");
 \end{verbatim}
 This yields the invariants $\xi, \,\, v+\frac{x}{\xi}, \,\, u - \frac{x^{3/2}}{\xi^2} - \frac{y}{\xi}$.
 Defining the following coordinate system
% \begin{align}
% (T,X,Y,P,Q) =\left( \xi, \,\, -v-\frac{x}{\xi},\,\, u - \frac{y}{\xi} - \frac{x^{3/2}}{\xi^2}, \,\,
% \frac{x}{\xi^2}, \,\, \frac{2 x^{3/2}}{\xi^3} +\frac{y}{\xi^2} \right),
% \end{align}
% we can read off the 2nd order ODE pair from $(\ell,\cD)$ in these new coordinates:
% \begin{align}
% \ell = \left\langle \partial_T + P\partial_X + Q \partial_Y - \frac{2P}{T} \partial_P - \frac{2(P^{3/2}+Q)}{T} \partial_Q \right\rangle, \quad \cD = \langle \partial_P, \partial_Q \rangle.
% \end{align} 
 \begin{align}
 (T,X,Y,P,Q) =\left( \frac{1}{\xi}, \,\, v+\frac{x}{\xi},\,\, -\frac{u}{2} + \frac{x^{3/2}}{2\xi^2} + \frac{y}{2\xi}, \,\,
x, \,\, \frac{x^{3/2}}{\xi} +\frac{y}{2} \right),
 \end{align}
 we can read off the 2nd order ODE pair from $(\ell,\cD)$ in these new coordinates:
 \begin{align}
 \ell = \left\langle \partial_T + P\partial_X + Q \partial_Y + P^{3/2} \partial_Q \right\rangle, \quad \cD = \langle \partial_P, \partial_Q \rangle.
 \end{align}
 \end{example}

 We do this similarly for all other SD conformal structures in our classification to obtain Table \ref{F:2ODE}.    The $\mathsf{M9}$ and $\mathsf{M8.1}$ cases were given in \cite{CDT2013}, while the $\mathsf{M8.2}$ case was only recently stated in \cite{KM2023}.

 \begin{table}[H]
 \[
 \begin{array}{|c|c|c|c|} \hline
 \mbox{Model} & \mbox{Petrov type} & \mbox{2nd order ODE pair} \\ \hline\hline
 \mathsf{M9} & \sfN.\sfO & x'' = 0, \quad y'' = (x')^3\\
 \mathsf{M8.1} & \sfD^+.\sfO & x'' = 0, \quad y'' = \frac{2x' (y')^2}{yx'-1} \\
 \mathsf{M8.2} & \sfD^-.\sfO & x'' = \frac{((y')^2+1)(x'y' - yy' - t)}{y't + x'-y}, \quad y'' = \frac{((y')^2+1)^2}{y't + x'-y} \\
 \mathsf{M7_a},\, \sfa^2=\frac{4}{3} & \sfN.\sfO & x'' = 0, \quad y'' = (x')^{3/2} \\%& x'' = \frac{-2x'}{t}, \quad y'' = \frac{-2((x')^{3/2} + y')}{t}\\ 
 \mathsf{M6N} & \mathsf{III}.\sfO & x'' = 0, \quad y'' = \frac{((3y^2+4y')f-3y)(y')^2}{2(y(y^2+2 y')f-y^2-y')}, \quad f = \frac{-y\pm \sqrt{y^2+4y'}}{2y'}
\\ \hline
 \end{array}
 \]
 \caption{2nd order ODE pairs associated to half-flat conformal structures}
 \label{F:2ODE}
 \end{table}

 %%%%%%%%%%%%%%%%%%%%%%%%%%%%%%%%%%%%%%%%%%%%%%%%%%%%%%%%%%%% 
 
 \subsection{Cartan-theoretic descriptions}
 \label{S:CarTh}
 
 %%%%%%%%%%%%%%%%%%%%%%%%%%%%%%%%%%%%%%%%%%%%%%%%%%%%%%%%%%%% 
  
 Beyond coordinate and Lie-theoretic descriptions, an equivalent manner of presenting homogeneous structures what we refer to as a {\sl Cartan-theoretic} description \cite{CS2009, The2022}.  We will give such descriptions for our homogeneous 4D-split conformal models in our classification.  Two key features of such descriptions is the ability to efficiently compute the conformal holonomy of these models and algebraically assess the existence of an Einstein metric in the conformal structure.  This will be illustrated in the next subsection.
 
 Any (regular, normal) Cartan geometry $(\cG \to M, \omega)$ of type $(\widetilde\fg,\widetilde{Q})$ that is homogeneous for the action of a Lie group $F$ can be encoded by:
  
  \begin{defn} \label{D:alg-model}
 A {\sl Cartan-theoretic model $(\ff;\widetilde\fg,\widetilde\fq)$} is a Lie algebra $(\ff,[\cdot,\cdot]_\ff)$ such that:
 \begin{enumerate}
 \item[\rm (C1)] $\ff \subset \widetilde\fg$ is a filtered subspace, with filtrands $\ff^i := \ff \cap \widetilde\fg^i$, and $\ff / \ff^0 \cong \widetilde\fg / \widetilde\fq$.
 \item[\rm (C2)] $\ff^0$ inserts trivially into the {\sl curvature} $\kappa(x,y) := [x,y] - [x,y]_\ff$.
 \item[\rm (C3)] $\kappa \in \bigwedge^2(\ff/\ff^0)^* \otimes \widetilde\fg \cong \bigwedge^2 (\widetilde\fg/\widetilde\fq)^* \otimes \widetilde\fg$ is regular and normal, i.e.\ $\kappa \in \ker(\partial^*)^1$.
 \end{enumerate}
 The {\sl harmonic curvature} is $\kappa_H := \kappa \,\mod \im(\partial^*) \in H_2(\widetilde\fq_+,\widetilde\fg)^1$.
 \end{defn}
 
 We note that $\widetilde{Q}$ acts via the adjoint action on $\wfg$ and this induces an action on $\ff$, i.e. $\ff \mapsto \Ad_q \ff$ for $q \in \widetilde{Q}$.  Such Cartan-theoretic descriptions are regarded as equivalent.
 
 For 4D split-conformal structures, we should take $\wfg = \fsl(4,\bbR)$, but for our purposes in Section \ref{S:CHE}, it will instead be sufficient to work with complexified models, so we take $\wfg = \fsl(4,\bbC)$.  We have $\kappa_H$ is valued in the SD and ASD parts of the Weyl tensor module.  In Table \ref{F:harmonic}, we presented bases $\phi_0,...,\phi_4$ and $\psi_0,...,\psi_4$ for these respective modules, realized as harmonic 2-cochains (elements of $\Lambda^2 \wfg_-^* \otimes \wfg$).  This is the homogeneity $+2$ part of $\kappa$ and in principle $\kappa$ could have a homogeneity $+3$ part, corresponding to the so-called Cotton tensor.  While we could have generated a basis for this homogeneity $+3$ module (by applying raising operators to the basis above), this turns out to be unnecessary for our purposes.  Namely, in Table \ref{F:CT-realize} we present Cartan-theoretic realizations of all of our models, and in these realizations $\kappa$ is ``purely harmonic''.   (Indeed, $\kappa_H$ can always be viewed as the lowest homogeneous component of $\kappa$, see \cite[Theorem 3.1.12]{CS2009}.)
 
 These descriptions were found with the aid of {\tt Maple}, mostly via appropriately setting undetermined coefficients, and by normalizing the form of $\kappa_H$ to make them as simple as possible.
 
 \begin{table}[H]
 \[
 \begin{array}{|c|c|c|c|} \hline
 \mbox{Label} & \mbox{$\fsl(4,\bbC)$-matrix $\bT$} & \mbox{Curvature $\kappa$} & \mbox{Petrov type}\\ \hline\hline
 \mathsf{M9} & \begin{psm}
 2 z & 0 & 0 & 0\\
 a_3 & 0 & 0 & 0\\
 a_4 & a_1 & -z + h & s\\
 a_5 & a_2 & t & -z - h
 \end{psm} & \phi_0 & \sfN.\sfO\\ \hline
 %%%%%%%%%
 \mathsf{M8} & \begin{psm}
 s & 0 & \frac{a_2}{2} & -\frac{a_1}{2}\\
 0 & -s & -\frac{a_4}{2} & \frac{a_3}{2}\\
 a_3 & a_1 & t_1 & t_2\\
 a_4 & a_2 & t_3 & -t_1
 \end{psm} & \begin{array}{l}
 \phi_2
 \end{array} & \sfD.\sfO\\ \hline
 %%%%%%%%%
 \mathsf{M7}_\sfa & \begin{array}{c} \begin{psm}
 z - g a_1& 0 & 0 & 0 \\ 
 a_3 & g a_1 & f a_1 & 0\\ 
 a_4 & a_1 & g a_1 & 0\\
 a_5 & a_2 & t  & -z-ga_1  \end{psm}\\
 \mbox{where } (f,g) = (\frac{\sfa^2}{4} + 5, -\frac{\sfa}{2})
 \end{array}
  &
 \begin{array}{c}
  \phi_0 + 12(\sfa^2 - \frac{4}{3}) \psi_0
 \end{array} & \begin{cases} \sfN.\sfN, & \sfa^2 \neq \frac{4}{3}\\ \sfN.\sfO, & \sfa^2 = \frac{4}{3} \end{cases}\\ \hline
 %%%%%%%%%
 \mathsf{M6S} & 
 \begin{array}{c}
 \begin{psm}
 s_1 & 0 & \frac{11}{6} a_2 & \frac{19}{6} a_1 \\
 0 & -s_1 & \frac{19}{6} a_4 & \frac{11}{6} a_3 \\
 a_3 & a_1 & s_2 & 0 \\
 a_4 & a_2 & 0 & -s_2
 \end{psm}
 \end{array} &
 -\frac{4}{3} \phi_2 + \frac{4}{3} \psi_2 & \sfD.\sfD\\ \hline
 %%%%%%%%%
 \mathsf{M6N} & \begin{psm}
 \frac{3z}{2} - \frac{b_1}{2} & 0 & 0 & 0\\
 \frac{b_3}{4} & -\frac{z}{2} + \frac{b_1}{2} & -\frac{5b_1}{48} - 5 b_2 & 5 b_1\\
 b_3 & b_1 & -\frac{b_1}{6} - 16 b_2 - \frac{z}{2} & 16 b_1\\
 b_4 & b_2 & t & \frac{b_1}{6} + 16 b_2 - \frac{z}{2}
 \end{psm} & \phi_1 & \mathsf{III}.\sfO\\ \hline
  \end{array}
 \]
 \caption{Cartan-theoretic realizations of 4D split-conformal models in our classification}
 \label{F:CT-realize}
 \end{table}
 
 \begin{table}[H]
 \[
 \begin{array}{|c|l|c|c|} \hline
 \mbox{Label} & \mbox{Embedding} \\ \hline\hline
 \mathsf{M9} &   \begin{array}{@{}ll}
 \begin{array}{l@{}l}
 (X_1,X_2,X_3) &\mapsto (\bT_{a_1}, \bT_{a_2}, \frac{1}{2}\bT_{a_3})\\
 (X_4,X_5) & \mapsto (\frac{1}{6} \bT_{a_4}, -\frac{1}{6} \bT_{a_5})\\
 (\sfZ_1, e_{01}, \sfH, f_{01}) &\mapsto (\bT_z, \bT_s, \bT_h, \bT_t)\\
 \end{array}
 \end{array} \\ \hline
 \mathsf{M8} & 
 \begin{array}{ll}
 (X_1,X_2,X_3) & \mapsto (\bT_{a_1} + \bT_{a_3}, 2(\bT_{a_2} + \bT_{a_4}),3 \bT_s)\\
 (X_4,X_5) & \mapsto (\bT_{a_3} - \bT_{a_1}, 2(\bT_{a_2} - \bT_{a_4}))\\
 (\sfH,e_{01},f_{01}) &\mapsto (\bT_{t_1}, \frac{1}{2} \bT_{t_2}, 2 \bT_{t_3})
 \end{array} \\ \hline
%%%%%%%%%%%%%
 \mathsf{M7}_\sfa &   \begin{array}{ll} 
 (X_1,X_2,X_3) &\mapsto (\bT_{a_1} + \frac{3\sfa}{2} \bT_z,\bT_{a_2} + \frac{\sfa}{2} \bT_t, \frac{1}{2}\bT_{a_3} + \bT_t)\\
 (X_4,X_5) &\mapsto (\frac{1}{6} \bT_{a_4} - \frac{\sfa}{12} \bT_{a_3} + \frac{1}{3} \bT_{a_2} + \frac{5\sfa}{6} \bT_t, -\frac{1}{6} \bT_{a_5})\\
 (\sfZ_2, f_{01}) &\mapsto (\bT_z, \bT_t)
 \end{array} \\ \hline
%%%%%%%%%%%%%
 \mathsf{M6S} & \begin{array}{ll} 
 (X_1,X_2,X_3) &\mapsto (\bT_{a_1} + \bT_{a_3}, \bT_{a_2} + \bT_{a_4}, -2 \bT_{s_1} - \bT_{s_2})\\
 (X_4, X_5, \sfH) &\mapsto (\frac{1}{3} \bT_{a_1} - \bT_{a_3}, -\bT_{a_2} + \frac{1}{3} \bT_{a_4}, \bT_{s_2})
 \end{array} \\ \hline
%%%%%%%%%%%%% 
 \mathsf{M6N} & \begin{array}{ll}
 X_1 &\mapsto -\frac{24}{5} \bT_{b_1} + \frac{3}{40} \bT_{b_2} + 32 \bT_{b_3} - \frac{3}{160} \bT_t\\
 X_2 &\mapsto \frac{1}{16} \bT_{b_2} - \frac{5}{12} \bT_{b_4} + \frac{1}{16} \bT_t\\
 X_3 &\mapsto -\frac{3}{16} \bT_{b_2} + \frac{5}{4} \bT_{b_4} + \frac{3}{2} \bT_{b_5} - \frac{7}{64} \bT_t\\
 X_4 &\mapsto \frac{24}{5} \bT_{b_1} - \frac{1}{5} \bT_{b_2} + \frac{5}{6} \bT_{b_4} + 3 \bT_{b_5} - \frac{13}{240} \bT_t\\
 X_5 &\mapsto \frac{1}{16} \bT_{b_2} + \frac{1}{96} \bT_t\\
 f_{01} &\mapsto -\frac{5}{384} \bT_t
 \end{array} \\ \hline
 \end{array}
 \]
 \caption{Isomorphisms from Lie-theoretic to Cartan-theoretic realizations}
 \end{table}   
  
 %%%%%%%%%%%%%%%%%%%%%%%%%%%%%%%%%%%%%%%%%%%%%%%%%%%%%%%%%%%% 
 
 \subsection{Conformal holonomy and Einstein representatives}
 \label{S:CHE}
 
 %%%%%%%%%%%%%%%%%%%%%%%%%%%%%%%%%%%%%%%%%%%%%%%%%%%%%%%%%%%% 

 Given a Cartan-theoretic realization $(\ff;\wfg,\wfq)$, the infinitesimal holonomy algebra $\hol$ of the associated homogeneous Cartan geometry is easily computed via the following increasing sequence of subspaces of $\widetilde\fg$:
 \begin{align} \label{E:hol}
 \hol^0 := \langle \kappa(x,y) : x,y \in \ff \rangle, \qquad
 \hol^i := \hol^{i-1} + [\ff,\hol^{i-1}], \,\,\forall i \geq 1.
\end{align}
Since $\dim(\widetilde\fg)$ is finite, the sequence stabilizes to some $\hol^\infty$, and we have $\hol = \hol^\infty$.  More precisely, since we are working with the model data $(\wfg,\wfq)$ for conformal geometry, the above computes its {\sl conformal holonomy}.

\begin{example}[$\mathsf{M7_a}$]  From Table \ref{F:CT-realize}, we begin with $\kappa = \phi_0 + 12(\sfa^2 - \frac{4}{3}) \psi_0$, where $\phi_0$ and $\psi_0$ were defined in Table \ref{F:harmonic}.  If $\sfa^2 \neq \frac{4}{3}$, then $\hol^0 = \langle E_{21}, E_{43} \rangle$, and we calculate $\hol = \hol^1 = \langle E_{31}, E_{41}, E_{42} \rangle \mod \hol^0$.  If $\sfa^2 = \frac{4}{3}$, then $\hol^0 = \langle E_{21} \rangle$, and $\hol = \hol^1 = \langle E_{31}, E_{41} \rangle \mod \hol^0$.
\end{example}

 Notably, $\hol \cong \fsp(4,\bbC) \cong \fso(5,\bbC)$ in the $\mathsf{M8}$ case.  We summarize:

\begin{theorem}
The (complexified) conformal holonomy algebras for the models in our classification are given in Table \ref{F:CHol}.
\end{theorem}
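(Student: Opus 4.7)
The plan is to compute each conformal holonomy directly from the Cartan-theoretic realizations in Table \ref{F:CT-realize} by iterating the recipe \eqref{E:hol}, and then identify the abstract isomorphism type of the resulting subalgebra of $\wfg = \fsl(4,\bbC)$. The inputs are prescribed: for each model the matrix $\bT$ presents $\ff \subset \wfg$ as a filtered subspace, and the harmonic curvature $\kappa$ (which coincides with the full curvature in these realizations, since the curvature is ``purely harmonic'') is given explicitly in the basis $\phi_i, \psi_i$ of Table \ref{F:harmonic}.

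The computation proceeds model by model. First, evaluate $\kappa(x,y)$ on pairs of basis vectors of $\ff/\ff^0$ to read off $\hol^0$ from the image of $\kappa$; since each $\phi_i, \psi_i$ has values in a prescribed root space of $\wfg_0 \op \wfg_{-1}$, the generators of $\hol^0$ are immediate. For example, in $\mathsf{M7}_\sfa$ the harmonic 2-cochains $\phi_0$ and $\psi_0$ take values in $\langle E_{21} \rangle$ and $\langle E_{43} \rangle$ respectively, so $\hol^0 = \langle E_{21}, E_{43} \rangle$ when $\sfa^2 \neq \tfrac{4}{3}$ and $\hol^0 = \langle E_{21} \rangle$ otherwise. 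Next, iteratively bracket with elements of $\ff$ (which can be read off the explicit matrix $\bT$) to obtain $\hol^{i}$. Because $\dim \wfg = 15$, stabilization occurs after at most a few steps; in practice each model stabilizes at $\hol^1$ or $\hol^2$.

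The final step is identifying the abstract Lie algebra structure of the stabilized $\hol$. In the filtered cases $\mathsf{M9}, \mathsf{M7}_\sfa, \mathsf{M6N}$, $\hol$ is solvable and readable directly from the list of generators. In $\mathsf{M6S}$, $\hol$ splits into two commuting pieces corresponding to the SD and ASD Weyl contributions, and one checks that each piece reconstructs an $\fsl(2,\bbC)$-copy from the structure of $\phi_2$ and $\psi_2$. The only nontrivial identification is the $\mathsf{M8}$ case, where the claim $\hol \cong \fsp(4,\bbC) \cong \fso(5,\bbC)$ has to be verified; the strategy here is to locate an invariant symplectic form on $\bbC^4$ preserved by the generators of $\hol$ -- the candidate being determined by the single $\phi_2$-component of $\kappa$, since $\phi_2$ encodes a double root pair of the SD Weyl quartic -- and then to confirm $\dim\hol = 10$ by counting the independent generators produced in the iteration.

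The main obstacle, as just noted, is the $\mathsf{M8}$ case: showing that the generated subalgebra is exactly $\fsp(4,\bbC)$ and not merely a subalgebra thereof. This will be handled by explicit matrix bookkeeping -- exhibiting an antisymmetric bilinear form $J$ on $\bbC^4$ (equivalently, an element of $\Lambda^2(\bbC^4)^*$) annihilated by $\hol^0$ and invariant under the bracket iteration, and verifying that the number of linearly independent matrices produced matches $\dim \fsp(4,\bbC) = 10$. All remaining entries of Table \ref{F:CHol} reduce to routine bookkeeping in a symbolic algebra system, which will be carried out in {\tt Maple} in parallel with the recursive computation of \eqref{E:hol}.
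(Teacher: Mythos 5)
Your proposal is correct and follows essentially the same route as the paper: apply the recursion \eqref{E:hol} to the Cartan-theoretic data $(\ff;\wfg,\wfq)$ and purely harmonic curvature $\kappa$ from Table \ref{F:CT-realize}, iterate brackets with $\ff$ until stabilization (carried out symbolically), and identify the resulting subalgebra of $\fsl(4,\bbC)$ -- exactly as the paper illustrates with the $\mathsf{M7}_\sfa$ example, including the identification $\hol \cong \fsp(4,\bbC)$ in the $\mathsf{M8}$ case via the invariant element $e_1 \wedge e_2 - 2\, e_3 \wedge e_4$ recorded in Table \ref{F:CHol}. No substantive difference in method, only in how explicitly the $\fsp(4,\bbC)$ verification is spelled out.
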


Given any conformal structure $[\sfg]$, and a metric representative $\sfg \in [\sfg]$, one may ask if $\sigma^{-2} \sfg$ is an Einstein metric for some choice of positive function $\sigma$.  There is a now well-established reformulation of this problem so that $\sigma$ (in fact a density) must equivalently lie in the kernel of a certain second-order linear partial differential operator called the {\sl almost-Einstein equation}.  (``Almost'' refers to the fact that PDE solutions $\sigma$ can have a non-trivial zero set.)  Moreover, $\sigma$ can be prolonged to a section $s$ of the {\sl standard tractor bundle} of the Cartan bundle of the conformal geometry, and  it is parallel with respect to the standard tractor connection $\nabla$.  For details, we refer to \cite{BEG1994}.  The infinitesimal holonomy algebra of $\nabla$ is precisely the conformal holonomy algebra discussed above, and this obstructs the existence of Einstein scales $\sigma$.

For homogeneous structures, the existence of such parallel objects can be assessed purely algebraically.  To do so, we make use of the isomorphism $\fsl(4,\bbR) \cong \fso(3,3)$ and that the standard $\fso(3,3)$-rep becomes the $\fsl(4,\bbR)$-rep $\Lambda^2 \bbR^4$.  Of interest are the $\hol$-annihilated elements in this representation.  The complexified results are given in Table \ref{F:CHol}.

 \begin{table}[H]
 \[
 \begin{array}{|c|c|c|} \hline
 \mbox{Label} & \mathfrak{hol}, \mbox{ represented on $\bbC^4 = \langle e_1, e_2, e_3, e_4 \rangle$} & (\Lambda^2 \bbC^4)^\mathfrak{hol}\\ \hline\hline
 \mathsf{M9} & \begin{psm} 
 0 & 0 & 0 & 0\\
 * & 0 & 0 & 0\\
 * & 0 & 0 & 0\\
 * & 0 & 0 & 0
 \end{psm} & \begin{array}{l} e_2 \wedge e_3,\\ e_2 \wedge e_4,\\ e_3 \wedge e_4 \end{array}\\ \hline
 \mathsf{M8} & \fsp(4,\bbC): \,\,\begin{psm}
 s_1 & s_2 & \frac{c_4}{2} & -\frac{c_2}{2}\\
 s_3 & -s_1 & -\frac{c_3}{2} & \frac{c_1}{2}\\
 c_1 & c_2 & t_1 & t_2\\
 c_3 & c_4 & t_3 & -t_1\\
 \end{psm} & e_1 \wedge e_2 - 2 e_3 \wedge e_4 \\ \hline
 \multirow{4}{*}{$\mathsf{M7}_\sfa$} & \sfa^2 \neq \frac{4}{3}: \quad\begin{psm} 
 0 & 0 & 0 & 0\\
 * & 0 & 0 & 0\\
 * & 0 & 0 & 0\\
 * & * & * & 0
 \end{psm} & \begin{array}{l} e_2 \wedge e_4,\\ e_3 \wedge e_4 \end{array}\\ \cline{2-3}
 & \sfa^2 = \frac{4}{3}: \quad \begin{psm} 
 0 & 0 & 0 & 0\\
 * & 0 & 0 & 0\\
 * & 0 & 0 & 0\\
 * & 0 & 0 & 0
 \end{psm} & \begin{array}{l} e_2 \wedge e_3, \\ e_2 \wedge e_4,\\ e_3 \wedge e_4 \end{array} \\ \hline
 \mathsf{M6S} & \fsl(4,\bbC) & 0\\ \hline
 \mathsf{M6N} & \begin{psm}
 * & 0 & 0 & 0 \\
 * & * & * & * \\
 * & * & * & * \\
 * & * & * & *
 \end{psm} & 0\\ \hline
 \end{array}
 \]
 \caption{Complexified conformal holonomy algebras and $\hol$-annihilated elements}
 \label{F:CHol}
 \end{table}

 We find that there are no non-trivial $\hol$-annihilated elements in the $\mathsf{M6S}$ and $\mathsf{M6N}$ cases, so:

 \begin{theorem} \label{T:AE}
 The conformal structures associated to $\mathsf{M9},\mathsf{M8},\mathsf{M7}$ admit Einstein representatives, while $\mathsf{M6S}$ and $\mathsf{M6N}$ do not.
 \end{theorem}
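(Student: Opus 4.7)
The plan is to exploit the tractor reformulation of the almost-Einstein equation from \cite{BEG1994}: a conformal structure admits an Einstein representative on an open subset if and only if there is a nonzero parallel section $\mathbb{I}$ of the standard tractor bundle whose projection to the density slot is non-vanishing there. For a Cartan-theoretically homogeneous structure, parallel sections of any associated tractor bundle correspond bijectively to vectors in the corresponding fibre representation annihilated by the infinitesimal conformal holonomy algebra $\hol \subset \wfg$. Via the isomorphism $\wfg = \fsl(4,\bbR) \cong \fso(3,3)$, the standard tractor representation is realized as $\Lambda^2 \bbR^4$, so the analysis reduces to computing $(\Lambda^2 \bbC^4)^{\hol}$ for each model; working over $\bbC$ suffices since the real annihilator is nonzero iff its complexification is.

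The first step is to compute $\hol$ iteratively via \eqref{E:hol}, starting from the harmonic curvature $\kappa$ and the Cartan-theoretic data $\ff \subset \wfg$ of Table \ref{F:CT-realize}. Since $\dim\wfg$ is finite, the sequence stabilizes after a few iterations, yielding the algebras recorded in the middle column of Table \ref{F:CHol}. The second step is to read off the $\hol$-annihilators. For $\mathsf{M6S}$ one obtains $\hol = \fsl(4,\bbC)$, which acts irreducibly on $\Lambda^2\bbC^4$, so no nonzero invariant exists. For $\mathsf{M6N}$, direct inspection of the block structure of $\hol$ displayed in Table \ref{F:CHol} confirms that no nonzero bivector is preserved (the visible $\fgl(3,\bbC)$-block in the last three rows decomposes $\Lambda^2\bbC^4$ as $\Lambda^2\langle e_2,e_3,e_4\rangle \oplus e_1 \wedge \langle e_2,e_3,e_4\rangle$, both weight-nonzero summands, and the off-diagonal $E_{j1}$'s mix them). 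Hence neither of these models admits an Einstein representative. For $\mathsf{M8}$, $\hol \cong \fsp(4,\bbC)$ stabilizes a distinguished skew form on $\bbC^4$, which provides the one-dimensional invariant subspace; in the remaining two cases the invariants are read off directly from the shape of $\hol$.

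To upgrade each algebraic invariant to a genuine Einstein scale, one must verify that the density slot of the corresponding parallel tractor is non-zero. Rather than doing this abstractly, I would simply exhibit explicit Einstein metrics in each conformal class: for $\mathsf{M9}$, Brinkmann's pp-wave $\sfg = \mathsf{d}x\mathsf{d}u + \mathsf{d}y\mathsf{d}v + x^2 \mathsf{d}v^2$ is itself Ricci-flat; for $\mathsf{M8.1}$ and $\mathsf{M8.2}$ the para- and pseudo-K\"ahler-Einstein representatives on the symmetric spaces $\mathrm{SL}(3,\bbR)/\mathrm{GL}(2,\bbR)$ and $\mathrm{SU}(1,2)/\mathrm{GL}(2,\bbR)$ described earlier in the paper are Einstein of non-zero scalar curvature; and for $\mathsf{M7}_\sfa$ the conformal rescaling \eqref{E:M7-Einstein} produces a Ricci-flat representative $\sfg_E$.

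The principal obstacle is the $\mathsf{M6N}$ computation: starting from $\kappa = \phi_1$, whose image gives a small initial $\hol^0$, one must iteratively bracket against $\ff$ and track which root spaces enter $\hol^i$ until stabilization at the subalgebra displayed in Table \ref{F:CHol}, and only then verify triviality of the invariants on $\Lambda^2\bbC^4$. Both sub-steps are direct linear-algebraic computations, but they demand patient bookkeeping of root spaces; the $\mathsf{M6S}$ case is comparatively easy because $\hol^0$ already contains elements whose iterated brackets quickly exhaust $\fsl(4,\bbC)$.
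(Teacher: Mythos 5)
Your proposal follows the paper's own route: compute the infinitesimal conformal holonomy $\hol$ of each Cartan-theoretic model via the iterative scheme \eqref{E:hol}, identify parallel standard tractors with $\hol$-annihilated elements of $\Lambda^2\bbC^4$ as in Table \ref{F:CHol} (irreducibility for $\mathsf{M6S}$ and the block analysis for $\mathsf{M6N}$ ruling out invariants), and then confirm the positive cases by the explicit Einstein representatives (the Ricci-flat pp-wave for $\mathsf{M9}$, the para-/pseudo-K\"ahler--Einstein metrics for $\mathsf{M8.1}$, $\mathsf{M8.2}$, and the rescaling \eqref{E:M7-Einstein} for $\mathsf{M7}_\sfa$). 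This is correct and essentially identical to the paper's argument.
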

 
 In particular, the $\mathsf{M9},\mathsf{M8.1}, \mathsf{M8.2}$ models in Table \ref{F:main} are Einstein, as are \eqref{E:M7-Einstein} in the $\mathsf{M7}$ cases.

\section*{Acknowledgments}

 We would like to thank Mike Eastwood for many discussions on the topic, and we acknowledge the use of the {\tt DifferentialGeometry} package \cite{AT-DG} in {\tt Maple}.  The research was funded from the Norwegian Financial Mechanism 2014–2021 with project registration number 2019/34/H/ST1/00636.  DT also received funding from the Troms\o{} Research Foundation (project ``Pure Mathematics in Norway''), the UiT Aurora project MASCOT, and this article/publication is based upon work from COST Action CaLISTA CA21109 supported by COST (European Cooperation in Science and Technology), \href{https://www.cost.eu}{https://www.cost.eu}.
 
 %%%%%%%%%%%%%%%%%%%%%%%%%%%%%%%%%%%%%%%%%%%%%%%%%%%%%%%%%%%%  
  \appendix  
 %%%%%%%%%%%%%%%%%%%%%%%%%%%%%%%%%%%%%%%%%%%%%%%%%%%%%%%%%%%% 

 \section{The obstruction to descending to a conformal structure}
 \label{S:torsion}
 
 %%%%%%%%%%%%%%%%%%%%%%%%%%%%%%%%%%%%%%%%%%%%%%%%%%%%%%%%%%%% 
 
 In this section, we present a formula for the homogeneity 2 torsion $\cT$ for an XXO structure that completely locally obstructs the descent to a conformal structure.

 Let $(N;\ell,\cD)$ be an XXO structure.  Consider a framing of $TN$ adapted to $(\ell,\cD)$, namely 
 \begin{align}
 X_1, \quad X_2, \quad L, \quad X_1' := [L,X_1],\quad X_2' := [L,X_2],
 \end{align}
 where $\cD = \langle X_1, X_2 \rangle$ and $\ell = \langle L \rangle$.   Since $L$ is nowhere vanishing, we can always introduce coordinates to rectify it.  (So below, there is no loss of generality in thinking of $L$ as $\partial_t$.)  Write
 \begin{align}
 X_i'' := [L,X_i'] = \alpha_i{}^j X_j' + \beta_i{}^jX_j + \gamma_i L,
 \end{align}
 or more succinctly,
 \begin{align} \label{E:Xpp}
 X'' = \alpha X' + \beta X + \gamma L. 
 \end{align}
 This resembles a linear 2nd order ODE system, and techniques from that study can be used here.  Namely, a first step is to bring it to {\sl Laguerre--Forsyth canonical form}.
 
 Let $\bar{X} := A X$ be a local frame change in $\cD$, so $A$ is $\GL(2,\bbR)$-valued.  Then,
 \begin{align}
 \bar{X}' &:= [L,\bar{X}] = L(A) X + A X'\\
 \bar{X}'' &:= [L,\bar{X}'] = L^2(A) X + 2L(A) X'+ A X''\\
 &= L^2(A) X + 2L(A) X'+ A (\alpha X' + \beta X + \gamma L) \nonumber\\
 &= (2L(A) + A \alpha  ) X' + (L^2(A) + A \beta ) X + A  \gamma L \nonumber\\
 &= (2L(A) + A \alpha  ) A^{-1} (\bar{X}' - L(A) A^{-1} \bar{X})+ (L^2(A) + A \beta ) A^{-1} \bar{X} + A  \gamma L \nonumber
 \end{align}
 The coefficient in front of $\bar{X}'$ is $\bar\alpha :=  (2L(A) + A \alpha  ) A^{-1}$.  Normalizing $\bar\alpha = 0$ is equivalent to solving a first-order matrix ODE system, $L(A) = -\frac{1}{2} A \alpha$, which can always be done.  Substitution yields:
 \begin{align}
 \bar{X}'' 
 &= \left(-\frac{1}{2} L(A)\alpha - \frac{1}{2} A L(\alpha) + A \beta \right) A^{-1} \bar{X} + A  \gamma L\\
 &= A\left(\frac{1}{4} \alpha^2 - \frac{1}{2} L(\alpha) + \beta \right) A^{-1} \bar{X} + A  \gamma L \nonumber
 \end{align}
 
 Now consider a local frame change $\bar{L} := \lambda L$ in $\ell$, where $\lambda$ is nowhere vanishing. Modulo $\ell$,
 \begin{align}
 \dot{\bar{X}} &:= [\bar{L},\bar{X}] \equiv \lambda \bar{X}'\\
 \ddot{\bar{X}} &:= [\bar{L},\dot{\bar{X}}] \equiv [\bar{L},\lambda \bar{X}'] 
 = \bar{L}(\lambda) \bar{X}' + \lambda [\bar{L},\bar{X}']
 \equiv \lambda L(\lambda) \lambda^{-1} \dot{\bar{X}} + \lambda^2 \bar{X}''\\
 &\equiv \lambda L(\lambda) \lambda^{-1} \dot{\bar{X}} + \lambda^2 A\left(\frac{1}{4} \alpha^2 - \frac{1}{2} L(\alpha) + \beta \right) A^{-1} \bar{X} \nonumber
 \end{align}
 We maintain the analogous earlier normalization of having no $\dot{\bar{X}}$ coefficient by using $\lambda$ satisfying $L(\lambda) = 0$.  The coefficient of $\bar{X}$ now yields an element of $\tEnd(\cD)$ (so tensorial with respect to frame changes on $\cD$), and of weight 2 with respect to rescalings along $\ell$.  Its trace-free part is:
 \begin{align} \label{E:XXOtorsion}
 \cT := \Xi - \frac{1}{2} \tr(\Xi) I, \qbox{where}  \Xi := \frac{1}{4} \alpha^2 - \frac{1}{2} L(\alpha) + \beta.
 \end{align}
 
 From Table \ref{F:KH}, the homogeneity 2 XXO harmonic curvature is a map $\cT : \ell \times TN / \cH \to \cD$ that is trace-free, when regarded as $\cT \in (\ell^*)^{\otimes 2} \otimes \tEnd(\cD)$ in view of the natural identification $TN / \cH \cong \ell \otimes \cD$.  There are no other components of this type in the homogeneity 2 part of the curvature.  Since $\cT$ in \eqref{E:XXOtorsion} is naturally covariant for the XXO structure and of the correct type, it must be the claimed harmonic curvature.
 
 \begin{example} \label{X:M6N-torsion}
 Let us calculate $\cT$ in the $\mathsf{M6N}$ case to establish \eqref{E:M6Ntorsion}.  Here, we work with Lie algebraic data $(\ff / \ff^0;\ell,\cD)$ with 
 \begin{align}
 \ff^0 = \langle f_{01} \rangle,\quad
 \ff^{-1} / \ff^0 = \ell \op \cD, \quad 
 \ell \equiv \langle X_3 + a X_2 \rangle, \quad 
 \cD \equiv \langle X_1, X_2 \rangle \quad \mod \ff^0,
 \end{align}
 and $X_1,X_2, X_3$ specified in \eqref{E:M6N-123}.  Take $L = X_3 + a X_2$ (labelled $X$ in the derivation above).
 
 Let $X_i' = [L,X_1]$ and $X_i'' = [L,X_i']$.  A direct computation leads to:
 \begin{align}
 X_1'' &= 2a(3-a) X_1 - 2a(a^2 - 4a + 6) X_2 + 2a(a-3) L - 3(a-2) X_1' - 2a(a-1) X_2'\\
 X_2'' &= (a-6) X_2' + 12 a f_{01}
 \end{align}
 Consequently, in \eqref{E:Xpp} we have the constant matrices:
 \begin{align}
 \alpha = \begin{pmatrix}
 - 3(a-2) & - 2a(a-1)\\
 0 & a-6
 \end{pmatrix}, \quad
 \beta = \begin{pmatrix}
 2a(3-a) & -2a(a^2 - 4a + 6)\\
 0 & 0 
 \end{pmatrix}.
 \end{align}
 Then we obtain \eqref{E:M6Ntorsion} by computing \eqref{E:XXOtorsion}:
 \begin{align}
 \Xi = \frac{1}{4} \alpha^2 + \beta = \begin{pmatrix}
 \frac{1}{4} (a-6)^2 & -a(a-3)(a-4)\\
 0 & \frac{1}{4} (a-6)^2
 \end{pmatrix}, \quad  \cT = \begin{pmatrix}
 0 & -a(a-3)(a-4)\\
 0 & 0
 \end{pmatrix}.
 \end{align}
 \end{example}

 %%%%%%%%%%%%%%%%%%%%%%%%%%%%%%%%%%%%%%%%%%%%%%%%%%%%%%%%%%%% 

 \section{An approach based on Cartan's equivalence method}
 
 %%%%%%%%%%%%%%%%%%%%%%%%%%%%%%%%%%%%%%%%%%%%%%%%%%%%%%%%%%%% 

In principle, there are also other methods that are well-suited to solve the classification problem of homogeneous structures presented in this article. In this Appendix, we outline a powerful approach  based on Cartan's equivalence and reduction methods. On the one hand, it quickly gives rise to new invariants that distinguish locally inequivalent structures and is also suitable for the study of non-homogeneous structures. On the other hand, a full execution of this approach  requires careful book-keeping and  non-trivial, lengthy computations that would  be ultimately difficult to present. 
 
 %%%%%%%%%%%%%%%%%%%%%%%%%%%%%%%%%%%%%%%%%%%%%%%%%%%%%%%%%%%% 
 \subsection{The invariant coframe} 
 %%%%%%%%%%%%%%%%%%%%%%%%%%%%%%%%%%%%%%%%%%%%%%%%%%%%%%%%%%%% 
 We first show how to invariantly associate a coframe in dimension $9$ to an XXO-structure with flat, maximally non-integrable rank $2$ distribution $\mathcal{D}$. (In fact we could drop the requirement that $\mathcal{D}$ be flat, but we will restrict to this case to simplify the presentation.)

 Consider the flat $(2,3,5)$ distribution on $N$ with coordinates $(x,y,p,q,z)$  as in Example \ref{ex-235+line}.
Introduce the coframe $(\omega^i)$, 
 \begin{align}
 \omega^1=-(\mathsf{d}z-\tfrac{1}{2} q^2\mathsf{d}x)+q(\mathsf{d}p-q\mathsf{d}x),\,\,\,
 \omega^2=\mathsf{d}y -p\mathsf{d}x,\,\,\,
 \omega^3=\mathsf{d}p-q\mathsf{d}x,\,\,\,
 \omega^4=\mathsf{d}q,\,\,\,
 \omega^5=\mathsf{d}x.
 \end{align}
Then  $\cD=\mathsf{ker}(\omega^1,\omega^2, \omega^3), \quad \cH=[\cD,\cD]=\mathsf{ker}(\omega^1,\omega^2)$ and
\begin{align}
 \mathsf{d}\omega^1=&-\omega^3\wedge\omega^4,\quad
 \mathsf{d}\omega^2=-\omega^3\wedge\omega^5,\quad
 \mathsf{d}\omega^3=-\omega^4\wedge\omega^5,\quad
 \mathsf{d}\omega^4=0,\quad
 \mathsf{d}\omega^5=0.
 \end{align}

Let $(X_i)$ be  the dual frame, in particular,
 \begin{align}
 X_4=\partial_q,\quad X_5=\partial_x+p\partial_y+q\partial_p+\tfrac12 q^2\partial_z,\quad X_3=\partial_p+q\partial_z.
 \end{align}
A general line field $\ell$ in the rank 3 distribution $\cH$ complementary to the rank 2 distribution $\cD$, is spanned by the vector field
 \begin{align}
 \hat{X}_3=X_3-A X_4- B X_5,
 \end{align}
where $A=A(x,y,p,q,z)$ and $B=B(x,y,p,q,z)$ are arbitrary smooth fuctions.

Next we introduce a new coframe $(\hat{\omega}^i)$, $i=1,2,3,4,5$, adapted to $(\cD,\ell)$, i.e. such that \begin{equation}\label{eq-H,l}\cD=\mathsf{ker}(\hat{\omega}^1,\hat{\omega}^2, \hat{\omega}^3)\quad\mbox{and}\quad\ell=\mathsf{ker}(\hat{\omega}^1,\hat{\omega}^2,\hat{\omega}^4,\hat{\omega}^5)=\left\langle \hat{X}_3\right\rangle,\end{equation} where $(\hat{X}_i)$ is the dual frame.
There are plenty of such coframes; the one that we define is
\begin{equation}
\begin{pmatrix} \hat{\omega}^1\\\hat{\omega}^2\\\hat{\omega}^3\\\hat{\omega}^4\\\hat{\omega}^5\end{pmatrix}=\begin{pmatrix}
1&0&0&0&0\\
0&1&0&0&0\\
0&0&1&0&0\\
0&0&Y&1&0\\
0&0&Z&0&1
\end{pmatrix} \begin{pmatrix} \omega^1\\\omega^2\\\omega^3\\\omega^4\\\omega^5\end{pmatrix}.
\end{equation}

We now apply Cartan's equivalence method. We consider the lifted coframe
\begin{equation}\label{gencoframe}
\begin{pmatrix} \psi^1\\\psi^2\\\psi^3\\\psi^4\\\psi^5\end{pmatrix}=\begin{pmatrix}
u_1&u_2&0&0&0\\
u_3&u_4&0&0&0\\
u_{10}&u_{11}&u_5&0&0\\
u_{12}&u_{13}&0&u_6&u_7\\
u_{14}&u_{15}&0&u_8&u_9
\end{pmatrix} \begin{pmatrix} \hat{\omega}^1\\\hat{\omega}^2\\\hat{\omega}^3\\\hat{\omega}^4\\\hat{\omega}^5\end{pmatrix}
\end{equation}
on $\cG=N\times H$, where $H\subset\mathrm{GL}(5,\bbR)$ is the subgroup preserving  \eqref{eq-H,l}.
  
%Then we compute that
%$$\begin{aligned}\mathsf{d}\psi^1\wedge\psi^1\wedge\psi^2\wedge\psi^4=-\frac{u_2u_6-u_1u_7}{u_5(u_7u_8-u_6u_9)}\psi^1\wedge\psi^2\wedge\psi^3\wedge\psi^4\wedge\psi^5\\
%\mathsf{d}\psi^2\wedge\psi^1\wedge\psi^2\wedge\psi^5=-\frac{u_4u_8-u_3u_9}{u_5(u_7u_8-u_6u_9)}\psi^1\wedge\psi^2\wedge\psi^3\wedge\psi^4\wedge\psi^5.
%\end{aligned}$$
%This shows that we may normalize the right hand sides to zero 
%%by solving for $u_7$ and $u_9$:
%%, which results in the normalizations
%%$$u_7=\frac{u_2u_6}{u_1}\quad\&\quad u_9=\frac{u_4u_8}{u_3}.$$
%%After this being made, we see that
%and then from
%$$\begin{aligned}
%\mathsf{d}\psi^1\wedge\psi^1\wedge\psi^2=-\frac{u_1}{u_5u_6}\psi^1\wedge\psi^2\wedge\psi^3\wedge\psi^4,\\
%\mathsf{d}\psi^2\wedge\psi^1\wedge\psi^2=-\frac{u_3}{u_5u_8}\psi^1\wedge\psi^2\wedge\psi^3\wedge\psi^5,
%\end{aligned}$$
%we see that we can make the normalizations
Now we can make the normalizations
\begin{equation}\label{normal1}\begin{aligned}
\mathsf{d}\psi^1\wedge\psi^1\wedge\psi^2=-\psi^1\wedge\psi^2\wedge\psi^3\wedge\psi^4,\\
\mathsf{d}\psi^2\wedge\psi^1\wedge\psi^2=-\psi^1\wedge\psi^2\wedge\psi^3\wedge\psi^5
\end{aligned}
\end{equation}
by solving for 
%%means of the group normalizations
\begin{equation}
u_6=\frac{u_1}{u_5}\quad\&\quad u_7=\frac{u_2}{u_5}\quad\&\quad u_8=\frac{u_3}{u_5}\quad\&\quad u_9=\frac{u_4}{u_5}.\label{fnor}
\end{equation}
Then from
 \begin{align}
 \mathsf{d}\psi^3\wedge\psi^1\wedge\psi^2\wedge\psi^3=-\frac{u_5^3}{\delta}\psi^1\wedge\psi^2\wedge\psi^3\wedge\psi^4\wedge\psi^5,
 \end{align}
where 
 \begin{equation}\label{delta}
\delta=(u_2u_3-u_1u_4),
 \end{equation}
we conclude that we may further normalize
 \begin{equation} \label{normal2}
 \mathsf{d}\psi^3\wedge\psi^1\wedge\psi^2\wedge\psi^3=-\psi^1\wedge\psi^2\wedge\psi^3\wedge\psi^4\wedge\psi^5.
 \end{equation}
% We remark that this normalization is  possible because, for the structures we consider, the rank three distribution $H$ is the square of a $(2,3,5)$ distribution $D$.
% this term would be zero, and we would not be able to normalize.
% This term is related to the scalar relative invariant $J$, and we see that in the considered case $J\neq 0$.
By \eqref{normal1},  there exist linearly independent $1$-forms $\Theta^1, \Theta^2, \Theta^3, \Theta^4$ such that 
\begin{equation}\label{normal3}
\begin{aligned}
&\left(\mathsf{d}\psi^1-\psi^1\wedge(\Theta^1-\Theta^2)+\psi^2\wedge\Theta^4+\psi^3\wedge\psi^4\right)\wedge\psi^1=0 \\
&\left(\mathsf{d}\psi^1-\psi^1\wedge(\Theta^1-\Theta^2)+\psi^2\wedge\Theta^4+\psi^3\wedge\psi^4\right)\wedge\psi^2=0 \\
& \left(\mathsf{d}\psi^2-\psi^2\wedge(2\Theta^1+\Theta^2)+\psi^1\wedge\Theta^3+\psi^3\wedge\psi^5\right)\wedge\psi^1=0\\
& \left(\mathsf{d}\psi^2-\psi^2\wedge(2\Theta^1+\Theta^2)+\psi^1\wedge\Theta^3+\psi^3\wedge\psi^5\right)\wedge\psi^2=0\\
\end{aligned}
\end{equation}
The $1$-forms $\Theta^1, \Theta^2, \Theta^3, \Theta^4$ are not uniquely defined by these equations, but  there remains a freedom of adding arbitrary multiples of $\psi^1, \psi^2$ to each of them.
%and we have an $8$-dimensional space of solutions at each point.
%. The solution space is parameterized by $v_1, v_2, v_3, v_4, v_5, v_6, v_7, v_8 $.
We can now impose the further normalizations
\begin{equation}
\begin{aligned}\label{normal4}
&\left(\mathsf{d}\psi^3-\psi^3\wedge\Theta^1\right)\wedge\psi^1\wedge\psi^2\wedge\psi^4=0\\
&\left(\mathsf{d}\psi^3-\psi^3\wedge\Theta^1\right)\wedge\psi^1\wedge\psi^2\wedge\psi^5=0\\
&\left(\mathsf{d}\psi^4+\psi^4\wedge\Theta^2+\psi^5\wedge\Theta^4\right)\wedge\psi^1\wedge\psi^2\wedge\psi^4=0 \\
&\left(\mathsf{d}\psi^4+\psi^4\wedge\Theta^2+\psi^5\wedge\Theta^4\right)\wedge\psi^1\wedge\psi^2\wedge\psi^5=0 \\
& \left(\mathsf{d}\psi^5 +\psi^4\wedge\Theta^3-\psi^5\wedge(\Theta^1+\Theta^2)\right)\wedge\psi^1\wedge\psi^2\wedge\psi^4=0\\
& \left(\mathsf{d}\psi^5 +\psi^4\wedge\Theta^3-\psi^5\wedge(\Theta^1+\Theta^2)\right)\wedge\psi^1\wedge\psi^2\wedge\psi^5=0\\
\end{aligned}
\end{equation}
solving for $u_{10},\dots,u_{15}$. 
The residual structure group preserving the coframe normalizations \eqref{normal1}, \eqref{normal2}, \eqref{normal3}, and \eqref{normal4} is the following subgroup $H^1\subset\mathrm{GL}(5,\bbR)$ isomorphic to $\mathrm{GL}(2,\bbR)$:
\begin{equation} \label{reducedgroup}
\begin{pmatrix}
u_1&u_2&0&0&0\\
u_3&u_4&0&0&0\\
0&0&\delta^{\tfrac{1}{3}}&0&0\\
0&0&0&u_1/\delta^{\tfrac{1}{3}}&u_2/\delta^{\tfrac{1}{3}}\\
0&0&0&u_3/\delta^{\tfrac{1}{3}}&u_4/\delta^{\tfrac{1}{3}}
\end{pmatrix} .
\end{equation}
In particular, this implies the following:
\begin{prop} There is an invariantly defined rank $2$-distribution $\mathcal{K}$ complementary to the rank $3$ distribution $\cH=\cD\oplus\ell$, and  the tangent bundle splits as $TN=\cD\oplus\ell\oplus \mathcal{K}$.
\end{prop}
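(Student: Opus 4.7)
My plan is to read the splitting directly off the block-diagonal form of the residual structure group $H^1$ exhibited in \eqref{reducedgroup}. The first step is to observe that under the $H^1$-action, the coframe $(\psi^1,\ldots,\psi^5)$ decomposes into three invariant groupings: the pair $\{\psi^1,\psi^2\}$, the single form $\psi^3$ (scaled by $\delta^{1/3}$), and the pair $\{\psi^4,\psi^5\}$. Since \eqref{reducedgroup} contains no off-diagonal entries coupling these blocks, each of the corresponding cotangent subspaces $\langle\psi^1,\psi^2\rangle$, $\langle\psi^3\rangle$, $\langle\psi^4,\psi^5\rangle\subset T^*\cG$ is $H^1$-invariant, and hence descends to a well-defined subbundle of $T^*N$.

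With that in hand I would define
\[
 \mathcal{K}:=\ker(\psi^3,\psi^4,\psi^5),
\]
i.e.\ the joint annihilator of the last two invariant cotangent pieces. Because this annihilator ideal is $H^1$-invariant, $\mathcal{K}$ descends to an invariantly defined rank $2$ distribution on $N$. Complementarity to $\cH$ is then immediate from dimension and transversality: $\cH=\ker(\psi^1,\psi^2)$ has rank $3$, $\mathcal{K}$ has rank $2$, and $\cH\cap\mathcal{K}=\ker(\psi^1,\ldots,\psi^5)=0$. Combining this with the already-established splitting $\cH=\cD\oplus\ell$ gives $TN=\cD\oplus\ell\oplus\mathcal{K}$.

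The main obstacle is not this last step, which is essentially formal, but rather justifying that \eqref{reducedgroup} really is the exact stabilizer of all the preceding normalizations. This requires a careful audit of \eqref{normal1}--\eqref{normal4}: at each stage one must check that the proposed normalization conditions uniquely determine the corresponding structure-group parameters (so that no unaccounted off-diagonal freedom survives), that no previously normalized coefficient is disturbed, and that the auxiliary $1$-forms $\Theta^1,\ldots,\Theta^4$ introduced in \eqref{normal3} can be consistently refined across \eqref{normal4}. Once the residual group is pinned down exactly as in \eqref{reducedgroup}, the splitting $TN=\cD\oplus\ell\oplus\mathcal{K}$ follows as a formal corollary of its block structure, with no further geometric computation required.
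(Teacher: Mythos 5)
Your proposal is correct and takes essentially the same route as the paper: the paper presents the proposition as an immediate consequence of the block-diagonal form of the residual structure group \eqref{reducedgroup}, which is exactly your argument that the spans $\langle\psi^1,\psi^2\rangle$, $\langle\psi^3\rangle$, $\langle\psi^4,\psi^5\rangle$ are invariant and hence $\mathcal{K}=\ker(\psi^3,\psi^4,\psi^5)$ descends to an invariant rank $2$ complement of $\cH$. The issue you flag about verifying that \eqref{reducedgroup} is the exact stabilizer is part of the reduction already asserted in the paper just before the proposition, so relying on it matches the paper's own logic.
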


In order to uniquely pin down the $1$-forms $\Theta^1, \Theta^2, \Theta^3, \Theta^4$ further normalizations need to be imposed; for example the forms are uniquely determined if one further requires that
\begin{equation}
\label{normal5}
\begin{aligned}
&\left(\mathsf{d}\psi^1-\psi^1\wedge(\Theta^1-\Theta^2)+\psi^2\wedge\Theta^4+\psi^3\wedge\psi^4\right)=0 \\
& \left(\mathsf{d}\psi^2-\psi^2\wedge(2\Theta^1+\Theta^2)+\psi^1\wedge\Theta^3+\psi^3\wedge\psi^5\right)=0\\
&\left(\mathsf{d}\psi^3-\psi^3\wedge\Theta^1+\psi^4\wedge\psi^5\right)\wedge\psi^1\wedge\psi^4\wedge\psi^5=0\\&
\left(\mathsf{d}\psi^3-\psi^3\wedge\Theta^1+\psi^4\wedge\psi^5\right)\wedge\psi^2\wedge\psi^4\wedge\psi^5=0\\
&\left(\mathsf{d}\psi^4+\psi^4\wedge\Theta^2+\psi^5\wedge\Theta^4\right)\wedge\psi^1\wedge\psi^3\wedge\psi^4=0,\\
&\left(\mathsf{d}\psi^5 +\psi^4\wedge\Theta^3-\psi^5\wedge(\Theta^1+\Theta^2)\right)\wedge\psi^2\wedge\psi^3\wedge\psi^5=0,\\
&\left(\mathsf{d}\psi^4+\psi^4\wedge\Theta^2+\psi^5\wedge\Theta^4\right)\wedge\psi^2\wedge\psi^3\wedge\psi^4\\&+\left(\mathsf{d}\psi^4+\psi^4\wedge\Theta^2+\psi^5\wedge\Theta^4\right)\wedge\psi^1\wedge\psi^3\wedge\psi^5\\&+
\left(\mathsf{d}\psi^5 +\psi^4\wedge\Theta^3-\psi^5\wedge(\Theta^1+\Theta^2)\right)\wedge\psi^1\wedge\psi^3\wedge\psi^4=0,\\
&\left(\mathsf{d}\psi^5 +\psi^4\wedge\Theta^3-\psi^5\wedge(\Theta^1+\Theta^2)\right)\wedge\psi^1\wedge\psi^3\wedge\psi^5
\\& +\left(\mathsf{d}\psi^4+\psi^4\wedge\Theta^2+\psi^5\wedge\Theta^4\right)\wedge\psi^2\wedge\psi^3\wedge\psi^5\\&+\left(\mathsf{d}\psi^5  +\psi^4\wedge\Theta^3-\psi^5\wedge(\Theta^1+\Theta^2)\right)\wedge\psi^2\wedge\psi^3\wedge\psi^4=0
\end{aligned}.
\end{equation}
%(but there are other normalization choices that work equally well).

\begin{remark}
The chosen normalization condition is $H^1$-invariant, and one can show that we can indeed associate a canonical Cartan connection to our non-integrable XXO-structure. But since this fact is not directly  relevant for the classification problem,  we will not elaborate on it. 
\end{remark}

%%%%%%%%%%%%%%%%%%%%%%%%%%%%%%%%%%%%%%%%%%%%%%%%%%%%%%%%%%%% 
\subsection{Invariants}
%%%%%%%%%%%%%%%%%%%%%%%%%%%%%%%%%%%%%%%%%%%%%%%%%%%%%%%%%%%% 
 We associated to our  XXO-structure a coframe $(\psi^1, \psi^2, \psi^3, \psi^4, \psi^5, \Theta^1,\Theta^2,\Theta^3,\Theta^4)$ on $\mathcal{G}\subset N\times H^1$, uniquely determined by  \eqref{normal1}, \eqref{normal2}, \eqref{normal3}, \eqref{normal4}, and \eqref{normal5}. Equivalently, this means that the coframe satisfies structure equations of the form
\begin{equation}\label{torsion_c}
\begin{aligned}
\mathrm{d}\psi^1=&\psi^1\wedge (\Theta^1-\Theta^2)-\psi^2\wedge\Theta^4-\psi^3\wedge\psi^4 \\
\mathrm{d}\psi^2=&-\psi^1\wedge\Theta^3+\psi^2\wedge (2 \Theta^1+\Theta^2)-\psi^3\wedge\psi^5 \\
\mathrm{d}\psi^3=&\psi^3\wedge\Theta^1-\psi^4\wedge\psi^5
+\tfrac{1}{\delta^{2/3}}\beta \psi^1\wedge\psi^2\\&
+\tfrac{1}{\delta^{4/3}}\left(u_3^2\gamma_C+u_4^2\gamma_A+2 u_3 u_4 \gamma_B\right)\psi^1\wedge\psi^4\\
&-\tfrac{1}{\delta^{4/3}}\left(u_2 u_4 \gamma_A +u_2 u_3 \gamma_B +u_1 u_3 \gamma_C + u_1 u_4 \gamma_B\right) \psi^1\wedge\psi^5
\\
&-\tfrac{1}{ \delta^{4/3}}\left( u_2 u_4 \gamma_A +u_2 u_3 \gamma_B+u_1u_3\gamma_C+u_1u_4\gamma_B \right)\psi^2\wedge\psi^4\\
&+\tfrac{1}{\delta^{4/3}}\left(u_1^2\gamma_C+u_2^2\gamma_A+2 u_1u_2\gamma_B\right)\psi^2\wedge\psi^5\\
\mathrm{d}\psi^4=&-\psi^4\wedge\Theta^2-\psi^5\wedge\Theta^4+\tfrac{1}{\delta^{4/3}}\left(u_2\delta_B+u_1\delta_A\right)\psi^1\wedge\psi^2\\
&-\tfrac{1}{\delta^{5/3}}\left( u_2u_4 \tau_{A}+u_1u_3  \tau_{C}+ u_2u_3( \rho -\tau_{B})+ u_1u_4(-\rho+5\tau_{B}) \right)\psi^1\wedge\psi^3\\
&+\tfrac{1}{\delta}\left(u_4 \alpha_A- u_3 \alpha_B  \right)\psi^1\wedge\psi^4-\tfrac{1}{\delta}\left(\tfrac{7}{11} u_2 \alpha_A-\tfrac{1}{11} u_1 \alpha_B  \right)\psi^1\wedge\psi^5\\
&+\tfrac{1}{\delta^{5/3}}\left(u_1^2\tau_{C}+4u_1u_2\tau_{B}+u_2^2\tau_{A}\right)\psi^2\wedge\psi^3-\tfrac{1}{\delta}\left(\tfrac{4}{11}u_2 \alpha_A-\tfrac{4}{11}u_1 \alpha_B\right)\psi^2\wedge\psi^4
\\
\mathrm{d}\psi^5=&-\psi^4\wedge\Theta^3+\psi^5\wedge(\Theta^1+\Theta^2)+\tfrac{1}{\delta^{4/3}}\left(u_4\delta_B+u_3\delta_A\right)\psi^1\wedge\psi^2\\
&-\tfrac{1}{\delta^{5/3}}\left(u_3^2\tau_{C}+4u_3u_4\tau_{B}+u_4^2\tau_{A}\right)\psi^1\wedge\psi^3+\tfrac{1}{\delta}\left(\tfrac{4}{11}u_4\alpha_A-\tfrac{4}{11}u_3 \alpha_B\right)\psi^1\wedge\psi^5\\
&+\tfrac{1}{\delta^{5/3}}\left( u_1u_3 \tau_{C}+u_2 u_4\tau_{A}+ u_1u_4( \rho -\tau_{B}) +u_2u_3(-\rho+5\tau_{B}) \right)\psi^2\wedge\psi^3\\
&+\tfrac{1}{\delta}\left(\tfrac{7}{11}u_4 \alpha_A-\tfrac{1}{11}u_3\alpha_B\right)\psi^2\wedge\psi^4+\tfrac{1}{\delta}\left(-u_2 \alpha_A+ u_1\alpha_B\right)\psi^2\wedge\psi^5
\end{aligned}
 \end{equation}
 Here  $\gamma_A, \gamma_B, \gamma_C, \alpha_A, \alpha_B,\beta, \delta_A, \delta_B,  \tau_{A}, \tau_{B}, \tau_{C},\rho$ do not depend on the group parameters $u_1,\dots,u_4$.
\begin{remark}
 Inspecting the transformation of the coefficients in the above structure equations under the residual structure group exhibits several (weighted) tensorial invariants of the structure. In particular, it is visible that both $(\gamma_A,\gamma_B,\gamma_C)$ and $(\tau_A,\tau_B,\tau_C)$ define \emph{quadric invariants} of the structure.
\end{remark}
%In particular, one finds that
%\begin{equation}\label{gamma}
%\mathsf{d}\psi^3\wedge\psi^1\wedge\psi^3\wedge\psi^4=\tfrac{1}{\delta^{\frac{4}{3}}}\left(\gamma^{AB}u_Au_B\right)\psi^2\wedge\psi^2\wedge\psi^3\wedge\psi^4\wedge\psi^5,
%\end{equation}
%where  $\gamma^{AB}=\gamma^{BA},$ $A,B\in\{1,2\}$, is symmetric,
%\begin{equation}\label{beta}
%\mathsf{d}\psi^3\wedge\psi^3\wedge\psi^4\wedge\psi^5=\tfrac{1}{\delta^{\frac{2}{3}}}\beta \psi^1\wedge\psi^2\wedge\psi^3\wedge\psi^4\wedge\psi^5,
%\end{equation}
%for a scalar $\beta$,
%and
%\begin{equation}
%\mathsf{d}\psi^4\wedge\psi^1\wedge\psi^4\wedge\psi^5=\tfrac{1}{\delta^{\frac{5}{3}}}\left(\tau^{AB}u_A u_B\right)\psi^1\wedge\psi^2\wedge\psi^3\wedge\psi^4\wedge\psi^5,
%\end{equation}
%where $\tau^{AB}=\tau^{BA}$,  $A,B\in\{1,2\}$, is symmetric. 
%%Here  $A,B\in\{1,2\}$, $u_1, u_2$ are group parameters and $\delta$ is defined in \eqref{delta}.

%
Furthermore, the geometric meaning of the invariant condition $\tau_{A}=\tau_B=\tau_C= 0$ can be seen from the structure equations.
 \begin{prop} Consider the metric 
 \begin{align}
 \tilde{\sfg}=2\psi^2\psi^4-2\psi^1\psi^5 \quad\mbox{on}\quad \mathcal{G}\subset N\times H^1,
 \end{align}
 which is degenerate along the distribution defined as the kernel of $(\psi^3,\Theta^1,\Theta^2,\Theta^3,\Theta^4)$. Then $\tilde{\sfg}$ descends to a well-defined conformal class $[\sfg]$ of non-degenerate metrics of signature $(2,2)$ on the $4$-dimensional local leaf space  if and only if the torsion condition  $\tau_{A}=\tau_B=\tau_C= 0$ is satisfied.
 \end{prop}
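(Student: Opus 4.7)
The plan is to reduce the descent question to a purely algebraic statement about the Lie derivatives of $\tilde{\sfg}$ along the fibres of the quotient projection, and then read off the relevant coefficients directly from the invariant structure equations \eqref{torsion_c}.

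First I would identify the correct descent distribution. Since $\tilde{\sfg}=2\psi^2\psi^4-2\psi^1\psi^5$ contracts trivially in one slot against any vector in $\mathcal{V}:=\ker(\psi^1,\psi^2,\psi^4,\psi^5)$, and $\mathcal{V}$ has rank $5$ inside the $9$-dimensional $\mathcal{G}$, it is $\mathcal{V}$ (of which the $4$-dimensional distribution mentioned in the statement is the intersection with $\ker\psi^3$) that must play the role of the vertical distribution for a hypothetical projection $\mathcal{G}\to M^4$. Next, I would verify Frobenius-integrability of $\mathcal{V}$ directly from \eqref{torsion_c}: inspection shows that every term in $\mathsf{d}\psi^i$ for $i\in\{1,2,4,5\}$ contains a factor from $\{\psi^1,\psi^2,\psi^4,\psi^5\}$, so $\mathsf{d}\psi^i \equiv 0 \pmod{\psi^1,\psi^2,\psi^4,\psi^5}$ unconditionally. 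Hence $\mathcal{V}$ always foliates $\mathcal{G}$, and $\tilde{\sfg}$ is a well-defined \emph{tensor} on the $4$-dimensional local leaf space. The remaining issue is \emph{conformal} invariance under $\mathcal{V}$-flows.

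The core of the proof is a computation of $\mathcal{L}_V\tilde{\sfg}$ for $V\in\Gamma(\mathcal{V})$. For a basis of $\mathcal{V}$ I would take the coframe-dual vector fields $X_3, T_1, T_2, T_3, T_4$ of $\psi^3,\Theta^1,\ldots,\Theta^4$. Applying $\iota_{T_j}$ to \eqref{torsion_c} and using $\iota_{T_j}\psi^k=0$, one sees that $\mathcal{L}_{T_j}\psi^i$ is a scalar multiple of $\psi^i$ for each $i\in\{1,2,4,5\}$, with coefficients matching the weights prescribed by the residual structure group \eqref{reducedgroup}; a short check then gives $\mathcal{L}_{T_j}\tilde{\sfg}=\lambda_j(T_j)\tilde{\sfg}$ automatically (indeed this is just the defining conformal transformation rule of $\tilde{\sfg}$ under $H^1$). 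So the only non-trivial obstruction comes from $V=X_3$. Using $\mathsf{d}\psi^1,\mathsf{d}\psi^2$ we get $\mathcal{L}_{X_3}\psi^1=-\psi^4$ and $\mathcal{L}_{X_3}\psi^2=-\psi^5$, which contribute $-2\psi^5\psi^4+2\psi^4\psi^5=0$ to $\mathcal{L}_{X_3}\tilde{\sfg}$; the remaining contributions come entirely from the coefficients of $\psi^1\wedge\psi^3$ and $\psi^2\wedge\psi^3$ in $\mathsf{d}\psi^4,\mathsf{d}\psi^5$, which by \eqref{torsion_c} depend only on $\rho$ and $\tau_{A},\tau_{B},\tau_{C}$.

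Expanding $\mathcal{L}_{X_3}\tilde{\sfg}$ out in the basis $\{(\psi^1)^2,\psi^1\psi^2,(\psi^2)^2,\psi^1\psi^4,\ldots\}$, the $(\psi^i)^2$ and $\psi^1\psi^2$ coefficients (which must vanish for $\mathcal{L}_{X_3}\tilde{\sfg}\propto\tilde{\sfg}$, since $\tilde{\sfg}$ has none) become three quadratic forms in $(u_1,u_2,u_3,u_4)$. A clean feature I expect to verify is that the $\rho$-contributions from $\mathsf{d}\psi^4$ and $\mathsf{d}\psi^5$ cancel in the symmetric combination, leaving exactly the three quadratic forms $u_3^2\tau_{C}+4u_3u_4\tau_{B}+u_4^2\tau_{A}$, $u_1^2\tau_{C}+4u_1u_2\tau_{B}+u_2^2\tau_{A}$, and $u_1u_3\tau_{C}+(u_1u_4+u_2u_3)\tau_{B}+u_2u_4\tau_{A}$. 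Since these three quadratics are linearly independent in $\tau_A,\tau_B,\tau_C$ as $(u_1,\ldots,u_4)$ vary over $H^1$, their simultaneous vanishing is equivalent to $\tau_{A}=\tau_{B}=\tau_{C}=0$, completing both implications. Conversely, when these torsions vanish, the Lie derivative reduces to a multiple of $\tilde{\sfg}$ along \emph{every} $V\in\Gamma(\mathcal{V})$, and standard descent arguments produce the well-defined conformal class $[\sfg]$ of signature $(2,2)$ on $M^4$. The main obstacle is the bookkeeping in the expansion of $\mathcal{L}_{X_3}\tilde{\sfg}$ (keeping track of the $1/\delta^{5/3}$ prefactors and the cancellation of $\rho$), rather than anything conceptual.
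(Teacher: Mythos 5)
Your proposal is correct and is essentially the paper's own argument: the paper's (one-line) proof likewise applies Cartan's formula and the structure equations \eqref{torsion_c} to check conformal invariance of $\tilde{\sfg}$ under flows along the rank-$5$ degenerate distribution spanned by the duals of $\psi^3,\Theta^1,\dots,\Theta^4$, and your computation along $X_3$ and the $T_j$ reproduces this, with the $\rho$-terms indeed cancelling and the obstruction reducing to the quadratics $u_3^2\tau_C+4u_3u_4\tau_B+u_4^2\tau_A$, $u_1^2\tau_C+4u_1u_2\tau_B+u_2^2\tau_A$, $u_1u_3\tau_C+2(u_1u_4+u_2u_3)\tau_B+u_2u_4\tau_A$, whose vanishing for all group parameters is equivalent to $\tau_A=\tau_B=\tau_C=0$. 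Only cosmetic slips: $\tilde{\sfg}$ is not yet ``a well-defined tensor on the leaf space'' before the invariance check (that would make the proposition vacuous), $\mathcal{L}_{T_3}\psi^2=\psi^1$ and $\mathcal{L}_{T_4}\psi^1=\psi^2$ are not multiples of $\psi^2,\psi^1$ (though $\mathcal{L}_{T_j}\tilde{\sfg}\propto\tilde{\sfg}$ does hold for all $j$, as your $H^1$-equivariance argument shows), and your cross-term quadratic is missing a factor of $2$ on $\tau_B$.
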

 
 \begin{proof}
 Using Cartan's formula for the Lie derivative and the structure equations, it is straightforward to verify that $\tilde\sfg$ transforms conformally when Lie derived along any vector field $\xi\in \mathsf{ker}(\psi^3,\Theta^1,\Theta^2,\Theta^3,\Theta^4)$ if and only if  $\tau_A, \tau_B$, and $\tau_C$ vanish identically. 
 \end{proof}

 Henceforth, we will assume that this condition is satisfied. 
 \begin{remark}
 Note that $\tau_A$, $\tau_B$ and $\tau_C$ define a map $\ell\times TN/\mathcal{H}\to\mathcal{D}$; it can be identified with the harmonic invariant $\mathcal{T}$ from Table \ref{F:KH}. An alternative way of computing this invariant was discussed in Appendix  \ref{S:torsion}.
 \end{remark}

%Now consider the degenetate metric 
%$\tilde{g}=2(\psi^2 \psi^4 - \psi^1 \psi^5)$
%on $\mathcal{G}:=N\times H,$ where $H$ is the residual structure group \eqref{reducedgroup}. Using Cartan's formula for the Lie derivative and the structure equations, it is straightforward to verify that $g$ tranformes conformally when Lie derived along any vector field $\xi\in \mathsf{ann}(\psi^3,\Theta^1,\Theta^2,\Theta^3,\Theta^4)$ if and only if  $\tau^{AB}\equiv 0$. This means that  it descends to a well-defined conformal class of non-degenerate metrics of signature $(2,2)$ on the $4$-dimensional local leaf space $M$  by the foliation corresponding to the line field $\ell\subset\mathcal{H}$.   Henceforth, we will assume vanishing of $\tau$, i.e., we only consider non-integrable XXO-structures corresponding to $4$-dimensional split-conformal structures (vanishing of $\tau$  corresponds to vanishing of the invariant $\mathcal{T}$ discussed in Appendix \ref{S:torsion}).

 %%%%%%%%%%%%%%%%%%%%%%%%%%%%%%%%%%%%%%%%%%%%%%%%%%%%%%%%%%%% 
 \subsection{Classification problem} 
 %%%%%%%%%%%%%%%%%%%%%%%%%%%%%%%%%%%%%%%%%%%%%%%%%%%%%%%%%%%% 
 We now sketch how we could proceed to obtain an alternative derivation of the classification of  conformal structures with multiply transitive XXO-structures and $\Lie(\mathrm{G}_2)$-symmetric twistor distribution.  
 
Consider a coframe $(\psi^1,\cdots,\psi^5,\Theta^1,\cdots,\Theta^4)$  satisfying \eqref{torsion_c} with $\tau_A=\tau_B=\tau_C=0$. It defines $\mathcal{D}$ and $\ell$ via \eqref{eq-H,l}. (Note that the torsion condition \eqref{torsion_c} does \emph{not} already imply that the distribution $\mathcal{D}$ is $\Lie(\mathrm{G}_2)$-symmetric; we will implement this condition later.) Let $\gamma\in S^2\mathcal{D}^*$ denote the symmetric tensorfield defined by $\gamma_A$, $\gamma_B$, and $\gamma_C$.
 
  \begin{enumerate}
  \item We first assume that $\gamma\equiv 0$. Then applying $\mathsf{d}$ to \eqref{torsion_c}, one shows that this assumption implies that $\alpha_A=\alpha_B=0$, $\delta_A=\delta_B=0$, $\rho=\beta$ and
 \begin{equation}
 \begin{aligned}
 \mathrm{d}\Theta^1 &= 0\\
 \mathrm{d}\Theta^2 &= \phi_B\psi^1\wedge\psi^4+\phi_C\psi^1\wedge\psi^5+\phi_C\psi^2\wedge\psi^4+\phi_E\psi^2\wedge\psi^5-\Theta^3\wedge\Theta^4\\
 \mathrm{d}\Theta^3 &= (-\beta+\phi_C)\psi^1\wedge\psi^4+\phi_E\psi^1\wedge\psi^5+\phi_E\psi^2\wedge\psi^4+\psi_D\psi^2\wedge\psi^5+\Theta^1\wedge\Theta^4+2\Theta^2\wedge\Theta^3\\
 \mathrm{d}\Theta^4 &= 
\phi_A\psi^1\wedge\psi^4-\phi_B\psi^1\wedge\psi^5-\phi_B\psi^2\wedge\psi^4+(\beta-\phi_C)\psi^2\wedge\psi^5-\Theta^1\wedge\Theta^3-2\Theta^2\wedge\Theta^4\\
 \end{aligned}
 \end{equation}
 Since we are looking for homogeneous structures, we may assume that $\phi_A,\phi_B,\phi_C,\phi_D$ and $\beta$ are constants. If we further impose the condition that $\mathcal{D}$ be flat (i.e., the Cartan quartic vanishes, see e.g.\ \cite{AN2014}), then 
   one can show that there are precisely three XXO-structures satisfying these conditions:
  % homogeneous XXO-structures
  %  whose distribution $\mathcal{D}$ has vanishing Cartan quartic:
     these are the maximally symmetric model $\mathsf{M9}$  ($\beta=0$) and the two sub-maximally symmetric models $\mathsf{M8.1}$ and  $\mathsf{M8.2}$ ($\beta\neq 0$).
  \item Next we assume that $\gamma$ does not vanish identically, but $\Delta=(\gamma_B)^2-\gamma_A\gamma_C=0$. In this case we can use the $H$-action to normalize to $\gamma_A=\epsilon=\pm 1$, $\gamma_B=0$ and $\gamma_C=0$. The residual structure group preserving this normalization is $2$-dimensional and so  the maximal possible symmetry dimension in this branch is $7$. Assuming that the XXO-structure be homogeneous  with $7$-dimensional symmetry algebra and $\mathcal{D}$ be $\mathsf{Lie}(\mathrm{G}_2)$-symmetric, one ends up with two one-parameter families of structures in this branch: these are the $\mathsf{M7}_\sfa$ models (which satisfy $\beta= 0$).
    
   If we do not insist on maximal symmetry in this branch, then a further reduction leads to the $\mathsf{M6N}$ model (which satisfies $\beta\neq 0$). More work needs to be done if one wants to confirm  that the $\mathsf{M6N}$ model is the \emph{only} model with $6$-dimensional symmetry algebra in the branch where $\gamma\neq 0$ and $\Delta=0$. However, we showed that this is indeed  the case by a different method in Section 4.
  \item Finally, if $\Delta=(\gamma_B)^2-\gamma_A\gamma_C\neq 0$ there are three cases: we may use the group action to normalize either to $\gamma_A=\gamma_C=0$, $\gamma_B=1$ or to  $\gamma_A=\gamma_C=\epsilon=\pm 1$, $\gamma_B=0$. In each of these cases the residual structure group preserving the normalization has dimension $3$ and the the maximal possible symmetry dimension of a homogeneous model in the branch is $6$. Imposing further that $\mathcal{D}$ be flat leads to the models $\mathsf{M6S.1},$ $\mathsf{M6S.2},$ and $\mathsf{M6S.3}$ (in each of these cases $\beta\neq 0$). 
  \end{enumerate} 
%Note that (2) is the most interesting branch that contains new examples. It is not obvious that the $\mathsf{M6N}$-model is the only  $6$-dimensional homogeneous structure  in (2) with $G_2$-symmetric twistor distribution.

%%%%%%%%%%%%%%%%%%%%%%%%%%%%%%%%%%%%%%%%%%%%%%%%%%%%%%%%%%%% 
 \bibliographystyle{amsplain}

 \end{document}